\DeclareSymbolFont{AMSb}{U}{msb}{m}{n}
\definecolor{britishracinggreen}{rgb}{0.0, 0.26, 0.15}
\definecolor{cobalt}{rgb}{0.0, 0.28, 0.67}
    \DeclareSymbolFont{usualmathcal}{OMS}{cmsy}{m}{n}
    \DeclareSymbolFontAlphabet{\mathcal}{usualmathcal}
\numberwithin{equation}{section}
\def\into{\hookrightarrow}
\def\ra{\rightarrow}
\def\KS{\mathsf{KS}}
\def\ad{\mathsf{ad}}
\def\R{\mathbb R}
\def\C{\mathbb C}
\def\Z{\mathbb Z}
\def\FF{\mathscr{F}}
\def\O{\mathscr O}
\DeclareMathOperator{\Mor}{Mor}
\DeclareMathOperator{\Ob}{Ob}
\DeclareMathOperator{\Spec}{Spec\,}
\DeclareMathOperator{\GL}{GL}
\DeclareMathOperator{\Aut}{Aut}
\DeclareMathOperator{\Hom}{Hom}
\DeclareMathOperator{\End}{End}
\DeclareMathOperator{\Der}{Der}
\theoremstyle{definition}
\newtheorem*{lemma*}{Lemma}
\newtheorem*{theorem*}{Theorem}
\newtheorem*{example*}{Example}
\newtheorem*{fact*}{Fact}
\newtheorem*{notation*}{Notation}
\newtheorem*{definition*}{Definition}
\newtheorem*{prop*}{Proposition}
\newtheorem*{remark*}{Remark}
\newtheorem*{corollary*}{Corollary}
\newtheorem*{conventions*}{Conventions}
\newtheorem*{caution*}{Caution}
\newtheorem{definition}{Definition}[section]
\newtheorem{remark}[definition]{Remark}
\newtheoremstyle{thm} 
        {3mm}
        {3mm}
        {\slshape}
        {0mm}
        {\bfseries}
        {.}
        {1mm}
        {}
\theoremstyle{thm}
\newtheorem{theorem}[definition]{Theorem}
\newtheorem{corollary}[definition]{Corollary}
\newtheorem{lemma}[definition]{Lemma}
\newtheorem{prop}[definition]{Proposition}
\newtheoremstyle{sol} 
        {3mm}
        {3mm}
        {\normalfont}
        {0mm}
        {\scshape}
        {.}
        {1mm}
        {}
\theoremstyle{sol}
\newtheorem{notation}[definition]{Notation}
\newtheorem*{ssolution*}{Solution (sketch)}
\newtheorem*{solution*}{Solution}
\tikzset{commutative diagrams/arrow style=math font}
\tikzset{commutative diagrams/.cd,
mysymbol/.style={start anchor=center,end anchor=center,draw=none}}
\tikzset{
shift up/.style={
to path={([yshift=#1]\tikztostart.east) -- ([yshift=#1]\tikztotarget.west) \tikztonodes}
}
}
\DeclareMathAlphabet{\mathpzc}{OT1}{pzc}{m}{it}
\newcommand*{\defeq}{\mathrel{\vcenter{\baselineskip0.5ex \lineskiplimit0pt
                     \hbox{\scriptsize.}\hbox{\scriptsize.}}}%
                     =}
\title[Deformations of holomorphic pairs and $2d$-$4d$ wall-crossing]{Deformations of holomorphic pairs and $2d$-$4d$ wall-crossing\\ [1ex]
  }
\author{
Veronica Fantini 
}
\address{SISSA Trieste, Via Bonomea 265, 34136 Trieste, Italy}
\email{ vfantini@sissa.it}
\begin{document}
\hbadness=150
\vbadness=150

\begin{abstract}
We show how wall-crossing formulas in coupled $2d$-$4d$ systems, introduced by Gaiotto, Moore and Neitzke, can be interpreted geometrically in terms of the deformation theory of holomorphic pairs, given by a complex manifold together with a holomorphic vector bundle. The main part of the paper studies the relation between scattering diagrams and deformations of holomorphic pairs, building on recent work by Chan, Conan Leung and Ma.      
\end{abstract}

\maketitle

{
\hypersetup{linkcolor=black}
\tableofcontents}

\section{Introduction}


In the mathematical physics literature, {wall-crossing formulas} (WCFs for short) express the dependence of physically admissible ground states (``BPS states") in a class of theories on certain crucial parameters (``central charge"). For instance the WCF in coupled $2d$-$4d$ systems introduced by Gaiotto, Moore and Neitzke in \cite{WCF2d-4d} governs wall-crossing of BPS states in $\mathcal{N}=2$ supersymmetric $4d$ gauge theory coupled with a surface defect. This generalizes both the formulas of Cecotti--Vafa \cite{CV} in the pure $2d$ case and those of Kontsevich--Soibelman in the pure $4d$ case \cite{WCF4d, WCFKS}. 

From a mathematical viewpoint, in the pure $2d$ case the WCFs look like braiding identities for matrices representing certain monodromy data (Stokes matrices, see e.g. \cite{dubrovin}). On the other hand Kontsevich--Soibelman \cite{WCFKS} start from the datum of the (``charge") lattice $\Gamma$, endowed with an antisymmetric (``Dirac") pairing $\langle\cdot,\cdot\rangle_D\colon\Gamma\times\Gamma\to\Z$,  and define a Lie algebra closely related to the Poisson algebra of functions on the algebraic torus $(\C^*)^{\operatorname{rk} \Gamma}$. 
Then their WCFs are expressed in terms of formal Poisson automorphisms of this algebraic torus.

In the coupled $2d$-$4d$ case studied in \cite{WCF2d-4d} the setting becomes rather more complicated. In particular the lattice $\Gamma$ is upgraded to a pointed groupoid $\mathbb{G}$, whose objects are indices $\lbrace i,j,k\cdots\rbrace$ and whose morphisms include the charge lattice $\Gamma$ as well as arrows parametrised by $\amalg_{ij}\Gamma_{ij}$, where $\Gamma_{ij}$ is a $\Gamma$-torsor. Then the relevant wall-crossing formulas involve two types of formal automorphisms of the groupoid algebra $\C[\mathbb{G}]$: type $S$, corresponding to Cecotti--Vafa monodromy matrices, and type $K$, which generalize the formal torus automorphisms of Kontsevich--Soibelman. The main new feature is the nontrivial interaction of automorphisms of type $S$ and $K$.

These $2d$-$4d$ formulas have been first studied with a categorical approach by Kerr and Soibelman in \cite{2d-4dSK}. In this paper we take a very different point of view. We show how a large class of $2d$-$4d$ formulas can be constructed geometrically by using the deformation theory of holomorphic pairs, given by a complex manifold $\check{X}$ together with a holomorphic bundle $E$.

Our construction is a variant of the remarkable recent results of Chan, Conan Leung and Ma \cite{MCscattering}. That work shows how consistent scattering diagrams, in the sense of Kontsevich--Soibelman and Gross--Siebert (see e.g. \cite{GPS}) can be constructed via the asymptotic analysis of deformations of a complex manifold $\check{X}\defeq TM/\Lambda$, given by a torus fibration over a smooth tropical affine manifold $M$. The complex structure depends on a parameter $\hbar$, and the asymptotic analysis is performed in the semiclassical limit $\hbar \to 0$. The gauge group acting on the set of solutions of the Maurer-Cartan equation (which governs deformations of $\check{X}$) contains the \textit{tropical vertex group} $\mathbb{V}$ of Kontsevich--Soibelman and Gross--Siebert. Elements of the tropical vertex group are formal automorphisms of an algebraic torus and are analogous to the type $K$ automorphisms described above, and \textit{consistent} scattering diagrams with values the tropical vertex group reproduce wall-crossing formulas in the pure $4d$ case. 

In the present paper we introduce an extension $\tilde{\mathbb{V}}$ of the tropical vertex group whose definition is modelled on the deformation theory of holomorphic pairs $(\check{X}, E)$. In our applications, $\check{X}$ is defined as above and $E$ is a holomorphically trivial vector bundle on $\check{X}$. In order to simplify the exposition we always assume $\check{X}$ has complex dimension $2$, but we believe that this restriction can be removed along the lines of \cite{MCscattering}. Our first two main results give the required generalization of the construction of Chan, Conan Leung and Ma.
\begin{theorem}[Theorem \ref{thm:asymptotic_gauge}]
Let $\mathfrak{D}$ be an initial scattering diagram, with values in the extended tropical vertex group $\tilde{\mathbb{V}}$, consisting of two non-parallel walls. Then there exists an associated solution $\Phi$ of the Maurer-Cartan equation, which governs deformations of the holomorphic pair $(\check{X}, E)$, such that the asymptotic behaviour of  $\Phi$ as $\hbar \to 0$ defines uniquely a scattering diagram $\mathfrak{D}_{\infty}$ (see Definition \ref{def:D_infty}), with values in $\tilde{\mathbb{V}}$. 
\end{theorem}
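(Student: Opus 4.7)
The plan is to generalize the strategy of \cite{MCscattering} to the enlarged DGLA $L^{*,*}$ governing deformations of the pair $(\check X,E)$, which couples the Kodaira--Spencer complex $\Omega^{0,*}(T\check X)$ with the Dolbeault complex $\Omega^{0,*}(\End E)$ through a mapping-cone/Atiyah-type construction. My first step is to attach to each of the two initial walls $w_1,w_2\subset M$ a local Maurer--Cartan element $\Phi_{w_i}\in L^{*,*}\otimes\tilde{\mathfrak v}$ of Fourier type: for the $K$-component one uses the CLM Ansatz (a monomial in the torus direction times a Gaussian bump transverse to the wall, concentrating on $w_i$ as $\hbar\to 0$), and for the new $S$-component one multiplies the endomorphism-valued data by the same Gaussian profile. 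The construction is arranged so that each $\Phi_{w_i}$ individually satisfies the MC equation and whose leading-order stationary-phase expansion reproduces the prescribed element of $\tilde{\mathbb V}$ attached to $w_i$.

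Second, set $\Phi^{(0)}=\Phi_{w_1}+\Phi_{w_2}$. This fails to be Maurer--Cartan precisely through the cross-term $[\Phi_{w_1},\Phi_{w_2}]$, which is localized near $w_1\cap w_2$. Using the homotopy operator $H$ that inverts $\bar\partial$ on the relevant subcomplex (built as in \cite{MCscattering} from an explicit propagator on $\check X=TM/\Lambda$, extended trivially on the $\End E$ factor), I would apply the Kuranishi tree-sum construction
\[
\Phi=\Phi^{(0)}-\tfrac12 H[\Phi,\Phi],
\]
solved iteratively. Convergence takes place in the pro-nilpotent completion of $\tilde{\mathfrak v}$ with respect to the monoid filtration indexed by the groupoid $\mathbb G$, so that only finitely many terms contribute modulo each truncation. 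The bracket on $L^{*,*}\otimes\tilde{\mathfrak v}$ combines the Schouten bracket of polyvectors, the commutator on $\End E$, and the natural action of vector fields on $\End E$; one needs to check that this bracket preserves the $\tilde{\mathfrak v}$-valuedness of iterates, i.e.\ that composing $K$- and $S$-type data never produces terms outside $\tilde{\mathbb V}$.

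Third, I would analyze the asymptotics of $\Phi$ as $\hbar\to 0$. Each node of the binary tree indexing a term of the iteration contributes a Fourier convolution, and stationary-phase/steepest-descent arguments show that the resulting distribution concentrates exponentially along a single outgoing ray emanating from $w_1\cap w_2$ in a direction determined by the sum of the incoming momenta. The universal Gaussian integral at the intersection point fixes the coefficient, so the leading asymptotics of that tree-term yields a well-defined element of $\tilde{\mathbb V}$ attached to the outgoing ray. Collecting the initial walls and all rays produced by the tree expansion defines $\mathfrak D_\infty$, and uniqueness follows because the leading-order asymptotic is independent of the bump profile used in $\Phi_{w_i}$ and of intermediate gauge choices.

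The main obstacle, as in \cite{MCscattering}, is the combined analytic/algebraic bookkeeping: one must prove simultaneously that (i) at every order the iterative correction stays in the image of $H$ and produces rays of the expected tropical type, (ii) the stationary-phase evaluation on the mixed $K\!-\!S$ brackets indeed lies in $\tilde{\mathbb V}$, and (iii) the non-leading contributions, including the cross-interaction between the polyvector component and the $\End E$ component, are exponentially suppressed uniformly on compact subsets away from the rays. This is where the upgrade from $\mathbb V$ to $\tilde{\mathbb V}$ is delicate, since the $S$-type pieces carry no derivative in the torus direction and must be tracked separately through the homotopy operator. Once this control is in place, $\mathfrak D_\infty$ is produced by the same asymptotic procedure as in the pure $4d$ case of \cite{MCscattering}.
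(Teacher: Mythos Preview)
Your outline follows the same high-level strategy as the paper --- Gaussian Ansatz on each wall, Kuranishi iteration, asymptotic concentration on rays --- but it elides the step that actually produces elements of $\tilde{\mathbb V}$, and this is where the argument would stall as written.

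The solution $\Phi$ is a \emph{degree-one} object in $\Omega^1(U,\End E\oplus TM)[\![t]\!]$, whereas the wall-crossing data $\theta_a\in\tilde{\mathbb V}$ are exponentials of \emph{degree-zero} elements of $\tilde{\mathfrak h}$. You cannot read off $\log\theta_a$ directly from the stationary-phase limit of a tree term of $\Phi$; what the paper does instead is (i) decompose $\Phi=\sum_a\Phi_a$ over primitive Fourier directions $a\in(\Z_{\geq 0}^2)_{\mathrm{prim}}$, (ii) show on the punctured neighbourhood that each $\Phi_a$ is separately a Maurer--Cartan solution up to $\mathcal W^{-\infty}$ error (this uses that cross-terms $\{\Phi_a,\Phi_{a'}\}_\sim$ are supported at the intersection point, hence exponentially small on the annulus), and (iii) for each $a$ find the unique gauge $\varphi_a$ with $e^{\varphi_a}*0=\Phi_a$ and $p(\varphi_a)=0$; the automorphism $\theta_a$ is then the $\hbar\to 0$ leading term of $\varphi_a$. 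Step (ii) is not mentioned in your plan and is what makes the ray-by-ray analysis legitimate; step (iii) is the actual extraction mechanism, and it requires a \emph{second} homotopy operator $H$ (path-integral from a fixed basepoint in $H_{-}$) distinct from the propagator $\mathbf{H}$ used in the Kuranishi iteration. Conflating these two operators is a genuine problem: $\mathbf{H}$ is designed to produce one-forms supported on outgoing rays, while $H$ is designed to integrate one-forms to zero-forms with the correct jump behaviour across the wall.

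On the analytic side, the paper does not rely on stationary phase in the usual oscillatory-integral sense. Instead it introduces graded Sobolev-type filtrations $\mathcal W_{P}^{s}$ and $\mathcal W_0^s$ and decomposes the bracket $\{\cdot,\cdot\}_\sim$ into four labelled pieces $\natural,\flat,\sharp,\star$ according to how many powers of $\hbar$ and how many derivatives each contributes. The induction showing that only the $\natural$-labelled trees survive at leading order, and that their leading term lands in $\tilde{\mathfrak h}$, is the technical heart of the proof. Your phrase ``the $S$-type pieces carry no derivative in the torus direction and must be tracked separately'' is pointing at the right issue, but the concrete device you need is precisely this $\natural/\flat/\sharp/\star$ bookkeeping together with the estimates $\mathcal W_{P}^{s}\wedge\mathcal W_0^{r}\subset\mathcal W_{P}^{r+s}$ and $H(\mathcal W_{P}^{s})\subset\mathcal W_0^{s-1}$.
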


\begin{theorem}[Theorem \ref{thm:consistentD}]
The scattering diagram $\mathfrak{D}_{\infty}$ is consistent. 
\end{theorem}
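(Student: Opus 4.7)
The plan is to follow the strategy of Chan--Conan Leung--Ma for the pure tropical vertex group \cite{MCscattering}, adapted to the holomorphic pair setting. The essential point is that the Maurer-Cartan solution $\Phi$ produced in the previous theorem is a single, globally defined object on $\check{X}$, while $\mathfrak{D}_\infty$ records only its asymptotic behaviour near the walls. Consistency should therefore be forced by the uniqueness of $\Phi$ in each chamber: the wall-crossing factors attached to the walls of $\mathfrak{D}_\infty$ will be identified with the gauge transformations in $\tilde{\mathbb{V}}$ that relate canonical (gauge-fixed) representatives of $\Phi$ across adjacent chambers, and the product around any loop must return the same representative.

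Concretely, I would first zoom in on a joint $p$ of $\mathfrak{D}_\infty$, i.e.\ a point where finitely many walls meet, and restrict $\Phi$ to a small neighbourhood. In each chamber adjacent to $p$, the asymptotic analysis as $\hbar \to 0$ from the previous theorem allows one to write $\Phi$ in a normal form whose contributions are concentrated on ray-like supports aligned with the outgoing walls. Passing across a wall produces exactly the wall-crossing element of $\tilde{\mathbb{V}}$ attached to that wall in $\mathfrak{D}_\infty$. Traversing a small loop $\gamma$ around $p$ and composing these elements must then equal the identity in $\tilde{\mathbb{V}}$, because the result computes the gauge transformation relating the normal form of $\Phi$ in a chamber to itself. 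The argument is formalised by induction on the filtration order of the pro-nilpotent Lie algebra underlying $\tilde{\mathbb{V}}$: at each order, the Maurer-Cartan equation, modulo lower order terms, reduces to the flatness of the attached data on punctured neighbourhoods of joints, which is exactly the local consistency condition for $\mathfrak{D}_\infty$.

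The main obstacle, and what distinguishes this proof from the one in \cite{MCscattering}, is controlling the interaction between the two types of generators of $\tilde{\mathbb{V}}$: the type $K$ automorphisms coming from deformations of the base $\check{X}$, and the type $S$ automorphisms coming from deformations of the bundle $E$. The differential-graded Lie algebra governing deformations of the pair $(\check{X}, E)$ couples these via the action of polyvector fields on endomorphisms of $E$, and the bracket structure on $\tilde{\mathbb{V}}$ is designed to match this coupling; one has to verify carefully that the asymptotic leading-order terms from walls of type $K$ and type $S$ combine, via the Maurer-Cartan bracket, into precisely the commutators that appear in the non-commutative part of $\tilde{\mathbb{V}}$, and that no mixed error terms of the same weight are produced. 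Once this compatibility is established at first order where type $S$ and type $K$ walls meet at $p$, the inductive step is a direct adaptation of the scheme of \cite{MCscattering}, and consistency at all orders follows.
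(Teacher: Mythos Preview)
Your high-level intuition is right: consistency is forced because $\Phi$ is a single globally defined object and the wall-crossing factors are the gauge transformations $\varphi_a$ relating canonical representatives. However, you misidentify both the mechanism and the main difficulty.

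The paper's proof is a short, purely formal monodromy argument and requires no further asymptotic analysis of the $S$--$K$ interaction. One works on the universal cover $\tilde{A}$ of the punctured neighbourhood of the single joint $m_0$, and uses two facts: (i) for degree reasons $\lbrace\varphi_a,\tilde{\Phi}_{a'}\rbrace_\sim=0$, so that the ordered product $\prod_a e^{\varphi_a}\ast 0$ equals $\sum_a \tilde{\Phi}_a = \varpi^*(\Phi)$; and (ii) by uniqueness of the gauge under the fixing condition $p(\cdot)=0$, this product must coincide with $e^{\varpi^*(\varphi)}$ for the single global gauge $\varphi$. Since $\varphi$ is defined on all of $U$, its pullback is monodromy-free, and comparing the product on the two sheets $\tilde{\mathbb{A}}$ and $\tilde{\mathbb{A}}-2\pi$ (the latter containing no walls) gives $\Theta_{\gamma,\mathfrak{D}_N}=\mathrm{Id}$.

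What you flag as the ``main obstacle'' --- verifying that the leading asymptotics of $K$-type and $S$-type walls combine via the Maurer--Cartan bracket into the correct commutators in $\tilde{\mathbb{V}}$, with no mixed error terms --- is real, but it belongs to the \emph{previous} theorem (the construction of $\mathfrak{D}_\infty$ and the proof that each $\varphi_a$ has leading term in $\tilde{\mathfrak{h}}$), not to the consistency argument. Once the $\varphi_a$ exist with the stated properties, the consistency proof is indifferent to the internal structure of $\tilde{\mathbb{V}}$. Your proposed induction on the filtration with a local flatness check at each step is not needed and would be more laborious than the monodromy argument the paper actually uses.
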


We briefly highlight the main steps of the construction, which follows closely that of \cite{MCscattering}, adapting it to pairs $(\check{X}, E)$.  

\textbf{Step 1} We first introduce a \textit{symplectic dgLa} as the Fourier-type transform of the Kodaira-Spencer dgLa $\KS(\check{X},E)$ which governs deformation of the pair $(\check{X}, E)$. Although the two dgLas are isomorphic, we find that working on the symplectic side makes the results more transparent. In particular we define the Lie algebra $\tilde{\mathfrak{h}}$ as a subalgebra, modulo terms which vanish as $\hbar \to 0$, of the Lie algebra of infinitesimal gauge transformations on the symplectic side. The extension of the tropical vertex group is then defined as $\tilde{\mathbb{V}}\defeq\exp(\tilde{\mathfrak{h}})$.  

\textbf{Step 2.a} Starting from the data of a wall in a scattering diagram, namely from the automorphism $\theta$ attached to a line $P$, we construct a solution $\Pi$ supported along the wall, i.e. such that there exists a unique normalised infinitesimal gauge transformation $\varphi$ which takes the trivial solution to $\Pi$ and has asymptotic behaviour with leading order term given by $\log(\theta)$ (see Proposition \ref{prop:asymp1}). The gauge-fixing condition $\varphi$ is given by choosing a suitable homotopy operator $H$.    

\textbf{Step 2.b} Let $\mathfrak{D}=\lbrace\mathsf{w}_1, \mathsf{w}_2\rbrace$ be an initial scattering diagram with two non-parallel walls. By \textbf{Step 2.a.}, there are Maurer-Cartan solutions $\Pi_1$, $\Pi_2$, which respectively supported along the walls $\mathsf{w}_1$, $\mathsf{w}_2$. Using Kuranishi's method we construct a solution $\Phi$ taking as input $\Pi_1 +\Pi_2$, of the form $\Phi=\Pi_1+\Pi_2+\Xi$, where $\Xi$ is a correction term. In particular $\Xi$ is computed using a different homotopy operator $\mathbf{H}$. 

\textbf{Step 2.c } By using \textit{labeled ribbon trees} we write $\Phi$ as a sum of contributions $\Phi_a$ over ${a\in\left(\Z^2_{\geq 0}\right)_{\text{prim}}}$, each of which turns out to be independently a Maurer-Cartan equation (Lemma \ref{lem:Phi_aMC}). Moreover we show that each $\Phi_a$ is supported on a ray of rational slope, meaning that for every $a$, there is a unique normalised infinitesimal gauge transformation $\varphi_a$ whose asymptotic behaviour is an element of our Lie algebra $\tilde{\mathfrak{h}}$ (Theorem \ref{thm:asymptotic_gauge}). The transformations $\varphi_a$ allow us to define the saturated scattering diagram $\mathfrak{D}_\infty$ (Definition \ref{def:D_infty}) from the solution $\Phi$.

Note that in fact the results of \cite{MCscattering} have already been extended to a large class of dgLas (see \cite{CLMaY19}). For our purposes however we need a more ad hoc study of a specific differential-geometric realization of $\KS(\check{X},E)$: for example, there is a background Hermitian metric on $E$ which needs to be chosen carefully.\\ 

We can now discuss the application to $2d$-$4d$ wall-crossing. As we mentioned WCFs for coupled $2d$-$4d$ systems involve automorphisms of type $S$ and $K$. In Section \ref{sec:WCF} we consider their infinitesimal generators (i.e. elements of the Lie algebra of derivations of $\Aut(\C[\mathbb{G}][\![ t ]\!])$), and we introduce the Lie ring $\mathbf{L}_\Gamma$ which they generate as a $\C[\Gamma]$-module. On the other hand we construct a Lie ring $\tilde{\mathbf{L}}$ generated as $\C[\Gamma]$-module by certain special elements of the extended tropical vertex Lie algebra for holomorphic pairs, $\tilde{\mathfrak{h}}$. Our main result compares these two Lie rings.
\begin{theorem}[Theorem \ref{thm:homomLiering}]
Let $\left(\mathbf{L}_{\Gamma},[\cdot,\cdot]_{\Der(\C[\mathbb{G}])}\right)$ and $\left(\tilde{\mathbf{L}},[\cdot,\cdot]_{\tilde{\mathfrak{h}}}\right)$ be the $\C[\Gamma]$-modules discussed above (see Section \ref{sec:WCF}). Under an assumption on the BPS spectrum, there exists a homomorphism of $\C[\Gamma]$-modules and of Lie rings $\Upsilon\colon \mathbf{L}_{\Gamma}\to\tilde{\mathbf{L}}$.
\end{theorem}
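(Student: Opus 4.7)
The plan is to define $\Upsilon$ on a convenient set of generators, extend by $\C[\Gamma]$-linearity, and then verify bracket compatibility case by case. On the source side, the groupoid algebra $\C[\mathbb{G}]$ decomposes according to the morphism sets of $\mathbb{G}$, so $\mathbf{L}_\Gamma$ is generated over $\C[\Gamma]$ by the infinitesimal type $K$ derivations indexed by $\gamma\in\Gamma$ and the infinitesimal type $S$ derivations indexed by morphisms in $\amalg_{ij}\Gamma_{ij}$. On the target side, using a trivialisation of $E$ adapted to the idempotents $e_i$ attached to the objects of $\mathbb{G}$, the Lie algebra $\tilde{\mathfrak{h}}$ splits into a diagonal part (the tropical vertex subalgebra $\mathfrak{h}\subset\tilde{\mathfrak{h}}$ governing deformations of $\check{X}$) and off-diagonal endomorphism-valued parts, and $\tilde{\mathbf{L}}$ is generated over $\C[\Gamma]$ by these pieces. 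I would define $\Upsilon$ by matching generators: a type $K$ generator attached to $\gamma$ goes to the corresponding element of $\mathfrak{h}\subset\tilde{\mathfrak{h}}$, and a type $S$ generator attached to $\gamma_{ij}\in\Gamma_{ij}$ goes to the $(i,j)$-off-diagonal element of $\tilde{\mathfrak{h}}$ whose Fourier coefficient is the image of $\gamma_{ij}$ in $\Gamma$ under a chosen torsor base-point, and then extend $\C[\Gamma]$-linearly.

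Compatibility with the $\C[\Gamma]$-action is then built into the definition, and I would verify that $\Upsilon$ intertwines brackets in three cases. The $[K,K]$-case is the Kontsevich--Soibelman identity: it reduces to identifying the Dirac pairing $\langle\cdot,\cdot\rangle_D$ on $\Gamma$ with the pairing used to define the tropical vertex Lie algebra via the symplectic dgLa of Step 1 of the construction of $\tilde{\mathbb{V}}$. The $[K,S]$-case amounts to the statement that a type $K$ derivation attached to $\gamma$ acts on a type $S$ derivation attached to $\gamma_{ij}$ through the natural $\Gamma$-torsor pairing $\langle\gamma,\gamma_{ij}\rangle$, and this must be matched with the action of a tropical vertex element on an off-diagonal endomorphism block of $\tilde{\mathfrak{h}}$, which is again controlled by the same pairing. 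The $[S,S]$-case splits into a generic composition $\Gamma_{ij}\times\Gamma_{jk}\to\Gamma_{ik}$ for $i\neq k$, which corresponds to direct composition of off-diagonal endomorphism blocks, and a degenerate case $i=k$, in which both sides produce a diagonal correction lying in $\Gamma$ respectively in $\mathfrak{h}$.

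I expect the degenerate $[S,S]$-case to be the main obstacle, because the trace produced by composing two off-diagonal endomorphism-valued elements of $\tilde{\mathfrak{h}}$ must be identified with the precise element of $\C[\Gamma]$ encoding the composition of morphisms in $\Gamma_{ij}\times\Gamma_{ji}$. I would address this by computing the bracket explicitly in the chosen trivialisation, tracking how the background Hermitian metric on $E$ selected in Step 1 fixes the normalisation of the diagonal contribution, and comparing with the explicit formula for the bracket in $\Der(\C[\mathbb{G}])$ derived from the $2d$-$4d$ formalism of Gaiotto--Moore--Neitzke. The assumption on the BPS spectrum enters at this stage to ensure that, at each order in the formal parameter, only finitely many generators contribute nontrivially, so that both $\Upsilon$ and the bracket it is required to intertwine are well-defined, and the generator-level identities extend uniquely to a homomorphism of $\C[\Gamma]$-modules and of Lie rings on all of $\mathbf{L}_\Gamma$.
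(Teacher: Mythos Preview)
Your overall strategy matches the paper's: define $\Upsilon$ on generators (type $K$ goes to the tropical vertex part $(0,\Omega(\gamma)\mathfrak{w}^\gamma\partial_{n_\gamma})$, type $S$ goes to the off-diagonal part $(E_{ij}\mathfrak{w}^{m(\gamma_{ij})},0)$), extend $\C[\Gamma]$-linearly, and verify three bracket identities. But you have misidentified both the role of the BPS assumption and where the delicate case lies.

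The assumption in the paper is not a finiteness condition; it is the concrete requirement $\omega(\gamma,a)=\Omega(\gamma)\langle a,n_\gamma\rangle$, i.e.\ that the $2d$--$4d$ integer $\omega(\gamma,\cdot)$ factors through the lattice pairing. This is exactly what makes the $[K,S]$ and $[K,K]$ cases go through: on the source the $[K,S]$ bracket computes to $-\omega(\gamma'',\gamma+\gamma_{ij})X_{\gamma+\gamma'+\gamma''}\mathfrak{d}_{\gamma_{ij}}$, while on the target one gets $-\bigl(\Omega(\gamma'')\langle m(\gamma_{ij})+\gamma,n_{\gamma''}\rangle\, E_{ij}\mathfrak{w}^{\cdots},0\bigr)$, and these agree precisely under that assumption. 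Similarly the $[K,K]$ case reduces to matching $\omega(\gamma',\gamma''+\gamma''')$ with $\Omega(\gamma')\langle\gamma''+\gamma''',n_{\gamma'}\rangle$. No finiteness or convergence argument is needed; everything is checked on single generators.

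Your treatment of the $[S,S]$ case points in the wrong direction. The degenerate composition $\Gamma_{ij}\times\Gamma_{ji}$ does \emph{not} land in $\mathfrak{h}$: on the target side $[(E_{ij}\mathfrak{w}^{\cdots},0),(E_{ji}\mathfrak{w}^{\cdots},0)]_{\tilde{\mathfrak{h}}}$ lies in the diagonal of $\mathfrak{gl}(r)$ (first component) with zero $TM$ component, not in the tropical vertex part. The background Hermitian metric is irrelevant here: the bracket on $\tilde{\mathfrak{h}}$ is the purely algebraic one of Lemma~\ref{lem:dgLa}, obtained as the $\hbar\to0$ limit in which the connection terms drop out. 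The paper's proof of this case is the direct matrix identity $[E_{ij},E_{kl}]=\delta_{jk}E_{il}-\delta_{li}E_{kj}$ matched against the groupoid composition rule, with no trace-versus-$\C[\Gamma]$ identification and no metric normalisation entering.
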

This result shows that a saturated scattering diagram with values in (the formal group of) $\tilde{\mathbf{L}}$ is the same as a $2d$-$4d$ wall-crossing formula. Thus, applying our main construction with suitable input data, we can recover a large class of WCFs for coupled $2d$-$4d$ systems from the deformation theory of holomorphic pairs $(\check{X}, E)$. At the end of Section \ref{sec:WCF} we show how this works explicitly in two examples.  

\subsection{Plan of the paper}
The paper is organized as follows: in Section \ref{sec:background} we provide some background on deformations of holomorphic pairs in terms of differential graded Lie algebras, and on scattering diagrams, according to the definition given by Gross--Pandharipande--Siebert \cite{GPS}. In Section \ref{sec:single wall}, we construct a deformation $\Pi$, supported on a wall $\mathsf{w}$ of a scattering diagram. In Section \ref{sec:two_walls}, given an initial scattering diagram $\mathfrak{D}=\lbrace\mathsf{w}_1, \mathsf{w}_2\rbrace$ with two non-parallel walls, we first construct a solution $\Phi$ from the input of the solutions $\Pi_1$ and $\Pi_2$ supported respectively on $\mathsf{w}_1$ and $\mathsf{w}_2$. Then from the asymptotic analysis of $\Phi$ we compute the consistent scattering diagram $\mathfrak{D}_\infty $. Finally in Section \ref{sec:WCF} we recall the setting of wall-crossing formulas in coupled $2d$-$4d$ systems and prove our correspondence result, Theorem \ref{thm:homomLiering}, between $2d$-$4d$ wall-crossing and deformations of holomorphic pairs. 

\subsection{Acknowledgements}
I would like to thank my advisor Jacopo Stoppa for proposing this problem and for continuous support, helpful discussions, suggestions and corrections.

\section{Background}\label{sec:background}
\subsection{Setting}
Let $M$ be an affine tropical two dimensional manifold. Let $\Lambda$ be a lattice subbundle of $TM$ locally generated by $\frac{\partial}{\partial x^1},\frac{\partial}{\partial x^2} $, for a choice of affine coordinates $x=(x_1,x_2)$ on a contractible open subset $U\subset M$. We denote by $\Lambda^*=\Hom_\Z(\Lambda,\Z)$ the dual lattice and by $\langle\cdot,\cdot\rangle\colon\Lambda\times\Lambda^*\to\C$ the natural pairing. 

Define $\check{X}\defeq TM/\Lambda$ to be the total space of the torus fibration $\check{p}\colon\check{X}\to M$ and similarly define $X\defeq T^*M/\Lambda^* $. Then, let $\lbrace \check{y}_1, \check{y}_2\rbrace$ be the coordinates on the fibres of  $\check{X}(U)$ with respect to the basis $\frac{\partial}{\partial x^1},\frac{\partial}{\partial x^2}$, and define a one-parameter family of complex structures on $\check{X}$: $J_{\hbar}=\begin{pmatrix}
0 & \hbar I\\
-\hbar^{-1}I & 0
\end{pmatrix}$, with respect to the basis $\lbrace \frac{\partial}{\partial{x_1}},\frac{\partial}{\partial{x_2}},\frac{\partial}{\partial{\check{y}_1}},\frac{\partial}{\partial{\check{y}_2}}\rbrace$, parametrized by $\hbar\in\R_{>0}$. Notice that a set of holomorphic coordinates with respect to $J_{\hbar}$ is defined by $z_j\defeq\check{y}_j+i\hbar x_j$, $j=1,2$; in particular we will denote by $\textbf{w}_j\defeq e^{2\pi iz_j}$. On the other hand $X$ is endowed with a natural symplectic structure $\omega_{\hbar}\defeq\hbar^{-1}dy_j\wedge dx_j$, where $\lbrace y_j\rbrace$ are coordinates on the fibres of $X(U)$.

In the next section we are going to consider the limit as $\hbar\to 0$, which corresponds to the large volume limit of the dual torus fibration $(X,\omega_{\hbar})$. This is motived by mirror symmetry which predicts that quantum corrections to the complex side should arise in the large volume limit.

\subsection{Deformations of holomorphic pairs}
Let $E$ be a rank $r$ holomorphic vector bundle on $\check{X}$ with fixed hermitian metric $h_E$. We are going to set up the deformation problem of the pair $(\check{X}, E)$, with the approach of differential graded Lie algebras (dgLa) following \cite{defpair} and \cite{Manetti}. 
\begin{definition}
The Kodaira-Spencer dgLa which governs deformations of a holomorphic pair $(\check{X},E)$ is defined as follows
\begin{equation}
\KS(\check{X},E)\defeq (\Omega^{0,\bullet}(\check{X},\End E\oplus T^{1,0}X),\bar{\partial}=\begin{pmatrix}
\bar{\partial}_E & B\\
0 & \bar{\partial}_{\check{X}}
\end{pmatrix},[\cdot,\cdot]),
\end{equation}
where $B\colon\Omega^{0,q}(\check{X},T^{1,0}\check{X})\ra\Omega^{0,q}(\check{X},\End E)$ acts on $\varphi\in\Omega^{0,q}(\check{X}, T^{1,0}\check{X})$ as \[B\varphi\defeq\varphi\lrcorner F_E\] 
where $F_E$ is the Chern curvature of $E$ (defined with respect to the hermitan metric $h_E$ and the complex structure $\bar{\partial}_E$). The symbol $\lrcorner$ is the contraction of forms with vector fields; in particular since $F_E$ is of type $(1,1)$ and $\varphi$ is valued in $T^{1,0}\check{X}$, $\varphi\lrcorner F_E$ is the contraction of the type $(1,0)$ of $F_E$ with respect to the $T^{1,0}$ part of $\varphi$ and the wedge product of the $(0,q)$ form of $\varphi$ with the type $(0,1)$ of $F_E$. 
The Lie bracket is 
\begin{equation}
[(A,\varphi),(N,\psi)]\defeq(\varphi\lrcorner\nabla^EN-(-1)^{pq}\psi\lrcorner\nabla^EA+[A,N]_{\End E}, [\varphi,\psi]) 
\end{equation}
where $\nabla^E$ is the Chern connection of $(E,\bar{\partial}_E,h_E)$, $(A,\varphi)\in\Omega^{0,p}(\check{X},\End E\oplus T^{1,0}\check{X})$, i.e. $(A,\varphi)=(\tilde{A}^K,\tilde{\varphi}^K)d\bar{z}_K$ with $\tilde{A}^K\in\End E $, $\tilde{\varphi}^K\in T^{1,0}\check{X}$ and the multi-index $|K|=p$, and $(N,\psi)\in\Omega^{0,q}(\check{X},\End E\oplus T^{1,0}\check{X})$.
\end{definition}
The definition does not depend on the choice of a hermitian metric on $E$ but only on the cohomology class of $F_E$: if $h_E'$ is another metric such that its Chern curvature $F_E'\in [F_E]$, then the corresponding dgLas are quasi-isomorphic (see appendix of \cite{defpair}). 

An infinitesimal deformation of $(\check{X},E)$ is a pair  $(A,\varphi)\in\Omega^{0,1}(\check{X},\End E\oplus T^{1,0}\check{X}){\otimes}_{\C}\C[\![ t ]\!]$ 
 such that it is a solution of the so called Maurer-Cartan equation:
\begin{equation}\label{eq:MC}
    \bar{\partial}(A,\varphi) +\frac{1}{2}\big[(A,\varphi),(A,\varphi)\big]=0.
\end{equation}
In addition, there exists an infinitesimal gauge group action on the set of solutions of \eqref{eq:MC}, defined by $h\in\Omega^{0}(\check{X},\End E\oplus T^{1,0}\check{X})[\![ t ]\!]$ which acts on $f\in\Omega^{0,1}(\check{X},\End E\oplus T^{1,0}\check{X})[\![ t ]\!]$ as
\begin{equation}
e^h\ast f\defeq f-\displaystyle\sum_{k=0}^\infty \frac{\ad_h^k}{(k+1)!}(\bar{\partial} h-[f,h]),
\end{equation}
where $\ad_h(\cdot)=[h,\cdot]$. 
In order to fix the notation, let us recall the definition of the Kodaira-Spencer dgLa $\KS(\check{X})$ which governs formal deformations of the complex manifold $\check{X}$:
\begin{equation}
    \KS(\check{X})\defeq\left(\Omega^{0,\bullet}(\check{X},T^{1,0}\check{X}), \bar{\partial}_{\check{X}}, [\cdot,\cdot]\right)
\end{equation}
where $\bar{\partial}_{\check{X}}$ is the Dolbeault operator of the complex manifold $\check{X}$ associated to $J_{\hbar}$ and $[\cdot,\cdot]$ is the standard Lie bracket on vector fields and the wedge on the form part. 

\subsection{Scattering diagrams}
Let $e_1$ and $e_2$ be a basis for $\Lambda$, then the group ring $\mathbb{C}[\Lambda]$ is the ring of Laurent polynomial in the variable $z^m$, where $z^{e_1}=x$ and $z^{e_2}=y$. Let $\mathsf{m}_t$ be the maximal ideal of ring of formal power series $\mathbb{C}[\![ t ]\!]$ and define the Lie algebra $\mathfrak{g}$
\begin{equation}
    \mathfrak{g}\defeq\mathsf{m}_t\big(\mathbb{C}[\Lambda]{\otimes}_{\mathbb{C}}\C[\![ t ]\!]\big)\otimes_{\mathbb{Z}} \Lambda^*
\end{equation}
where every $n\in\Lambda^*$ is associated to a derivation $\partial_n$ such that $\partial_n(z^m)=\langle m,n\rangle z^m$ and the natural Lie bracket on $\mathfrak{g}$ is 
\begin{equation}
[z^m\partial_n,z^{m'}\partial_{n'}]\defeq z^{m+m'}\partial_{\langle m',n\rangle n'-\langle m, n'\rangle n}.
\end{equation}
In particular there is a Lie sub-algebra $\mathfrak{h}\subset\mathfrak{g}$:
\begin{equation}
\mathfrak{h}\defeq\bigoplus_{m\in \Lambda\smallsetminus\lbrace0\rbrace}z^m\cdot\big(\mathsf{m}_t\otimes m^{\perp}\big),
\end{equation}
where $m^{\perp}\in\Lambda^*$ is identified with the derivation $\partial_n$ and $n$ the unique primitive vector such that $\langle m,n\rangle=0$ and it is positive oriented according with the orientation induced by $\Lambda_{\mathbb{R}}\defeq\Lambda\otimes\mathbb{R}$. 
\begin{definition}[\cite{GPS}]
The tropical vertex group $\mathbb{V}$ is the sub-group of $\Aut_{\C[\![ t ]\!]}\big(\mathbb{C}[\Lambda]{\otimes}_{\mathbb{C}}\C[\![ t ]\!]\big)$, such that $\mathbb{V}\defeq\exp(\mathfrak{h})$. The product on $\mathbb{V}$ is defined by the Baker-Campbell-Hausdorff (BCH) formula, namely  
\begin{equation}
g\circ g'=\exp(h)\circ \exp(h')\defeq\exp(h\bullet h')=\exp(h+h'+\frac{1}{2}[h,h']+\cdots)
\end{equation}
where $g=\exp(h), g'=\exp(h')\in\mathbb{V}$.  
\end{definition}
The tropical vertex group was introduced by Gross, Pandharipande and Siebert in \cite{GPS} and in the simplest case its elements are formal one parameter families of symplectomorphisms of the algebraic torus $\C^*\times\C^*=\Spec\C[x,x^{-1},y,y^{-1}]$ with respect to the holomorphic symplectic form $\frac{dx}{x}\wedge \frac{dy}{y}$. 
\begin{definition}[Scattering diagram]
A scattering diagram $\mathfrak{D}$ is a collection of \textit{walls} $\textsf{w}_i=(m_i,P_i,\theta_i)$, where 
\begin{itemize}
    \item $m_i\in \Lambda$,
    \item $P_i$ can be either a \textit{line} through $m_0$, i.e. $P_i=m_0-m_i\mathbb{R}$ or a \textit{ray} (half line) $P_i=m_0-m_i\mathbb{R}_{\geq 0}$,
    \item $\theta_i\in\mathbb{V}$ is such that $\log(\theta_i)=\sum_{j,k}a_{jk}t^j z^{km_i}\partial_{n_i}$.
\end{itemize}
Moreover for any $k>0$ there are finitely many $\theta_i$ such that $\theta_i\not\equiv 1$ mod $t^k$.
\end{definition}

As an example, the scattering diagram \[\mathfrak{D}=\lbrace \mathsf{w}_1=\big(m_1=(1,0), P_1=m_1\R, \theta_1\big), \mathsf{w}_2=\big(m_2=(0,1), P_2=m_2\R, \theta_2\big)\rbrace\] can be represented as if figure \ref{fig:D1}.
\begin{figure}[h]
\center
\begin{tikzpicture}
\draw (2,0) -- (2,4);
\draw (0,2) -- (4,2);
\node [below right, font=\tiny] at (2,2) {0};
\node [below right, font=\tiny] at (2,2) {0};
\node [font=\tiny,above right] at (4,2) {$\theta_1$};
\node [font=\tiny,above left] at (2,4) {$\theta_2$};
\end{tikzpicture}
\caption{A scattering diagram with only two walls $\mathfrak{D}=\lbrace\mathsf{w}_1,\mathsf{w}_2\rbrace$}
\label{fig:D1}
\end{figure}
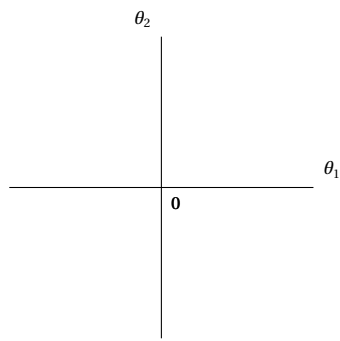

Denote by $\emph{Sing}(\mathfrak{D})$ the singular set of $\mathfrak{D}$: 
\[\emph{Sing}(\mathfrak{D})\defeq\bigcup_{\mathsf{w}\in\mathfrak{D}}\mathsf{\partial}P_\mathsf{w}\cup\bigcup_{\mathsf{w}_1,\mathsf{w}_2}P_{\mathsf{w}_1}\cap P_{\mathsf{w}_2}\] where $\partial P_\mathsf{w}=m_0$ if $P_{\mathsf{w}}$ is a ray and zero otherwise. There is a notion of order product for the automorphisms associated to each lines of a given scattering diagram, and it defined as follows:
\begin{definition}[Path order product]\label{def:pathorderedprod}
Let $\gamma:[0,1]\rightarrow \Lambda\otimes_{\mathbb{R}}\mathbb{R}\setminus\emph{Sing}(\mathfrak{D})$ be a smooth immersion with starting point that does not lie on a ray of the scattering diagram $\mathfrak{D}$ and such that it intersects transversally the rays of $\mathfrak{D}$ (as in figure \ref{fig:Dloop}). For each power $k>0$, there are times $0< \tau_{1}\leq\cdots\leq\tau_{s} <1$ and rays $P_i\in\mathfrak{D}$ such that $\gamma(\tau_j)\cap P_j\neq 0$. Then, define $\Theta_{\gamma,\mathfrak{D}}^k\defeq\prod_{j=1}^s\theta_{j}$. The path order product is given by: 
\begin{equation}
\Theta_{\gamma,\mathfrak{D}}\defeq\lim_{k\rightarrow\infty}\Theta_{\gamma,\mathfrak{D}}^k
\end{equation}
\end{definition}

\begin{figure}[h]
\begin{tikzpicture}
\draw[thick] (2,0) -- (2,4);
\draw[thick] (0,2) -- (4,2);
\node [below right, font=\tiny] at (2,2) {0};
\node [font=\small, right] at (4,2) {$\theta_1$};
\node [font=\small, right] at (2,4) { $\theta_2$};
\node [font=\small,left] at (0,2) {$\theta_1^{-1}$};
\node [font=\small, right] at (2,0) {$\theta_2^{-1}$};
\draw[red, thick] (2,2) -- (4,4);
\node [red, font=\small, right] at (4,4) {$\theta_m$ };
\draw [-latex, blue] (3,1) arc (-45:310:1.41);
\node [blue, below left, font=\tiny] at (1,1) {$\gamma$};
\end{tikzpicture}
\caption{$\Theta_{{\gamma}, \mathfrak{D}_{\infty}}=\theta_{1}\circ{\theta_{m}}\circ\theta_{2}\circ\theta_{1}^{-1}\circ\theta_{2}^{-1}$}
\label{fig:Dloop}
\end{figure}
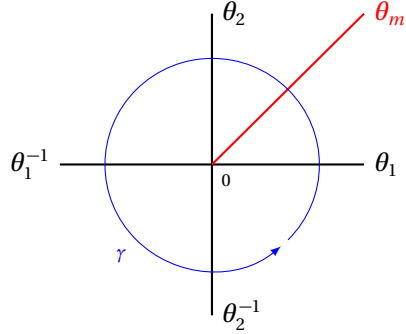

\begin{definition}[Consistent scattering diagram]A scattering diagram $\mathfrak{D}$ is consistent if for any closed path $\gamma$ intersecting $\mathfrak{D}$ generically, $\Theta_{\gamma,\mathfrak{D}}=\text{Id}_{\mathbb{V}}$. 
\end{definition}
The following theorem by Kontsevich and Soibelman is an existence (ad uniqueness) result of complete scattering diagram: 
\begin{theorem}[\cite{KS}]
{Let $\mathfrak{D}$ be a scattering diagram with two non parallel walls. There exists a unique minimal scattering diagram $\mathfrak{D}_\infty\supseteq\mathfrak{D}$ such that $\mathfrak{D}_{\infty}\setminus\mathfrak{D}$ consists only of rays, and it is {consistent}.}
\end{theorem}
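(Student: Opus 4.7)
The plan is to prove this by induction on the order in the maximal ideal $\mathsf{m}_t \subset \C[\![t]\!]$, exploiting the fact that $\mathfrak{h}$ is pro-nilpotent with respect to the $t$-adic filtration. Concretely, for each $k \geq 1$ we will construct a scattering diagram $\mathfrak{D}_k \supseteq \mathfrak{D}$, obtained from $\mathfrak{D}$ by adjoining finitely many rays emanating from the origin, such that $\Theta_{\gamma, \mathfrak{D}_k} \equiv \mathrm{Id}$ modulo $t^{k+1}$ for any small loop $\gamma$ around the origin, and such that the rays added at step $k$ carry automorphisms whose logarithms are of order exactly $t^k$. The diagram $\mathfrak{D}_\infty$ will then be obtained as the (well-defined) union $\bigcup_k \mathfrak{D}_k$.

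The base step is $\mathfrak{D}_1 = \mathfrak{D}$, which is trivially consistent modulo $t^2$ because $\log \theta_1, \log \theta_2 \in t \cdot \mathfrak{h}$ and the BCH formula reduces, modulo $t^2$, to addition. For the inductive step, assume $\mathfrak{D}_k$ has been constructed. Fix a loop $\gamma$ around the origin and compute $\Theta_{\gamma, \mathfrak{D}_k}$ via Definition \ref{def:pathorderedprod}, using the BCH expansion; by the inductive hypothesis this is an element of the form $\exp(X)$ with $X \in t^{k+1} \cdot \mathfrak{h} + t^{k+2}\cdot \mathfrak{h}$. Decomposing $X$ into its weight components with respect to the $\Lambda$-grading on $\mathfrak{h}$, one writes
\[
X \equiv \sum_{m \in \Lambda \setminus \{0\}} c_m \, t^{k} z^m \partial_{n_m} \pmod{t^{k+2}},
\]
where $n_m \in m^\perp$ is the canonical primitive vector and the sum is finite (only finitely many weights appear at order $t^{k+1}$, because $\log \theta_1, \log \theta_2$ have only finitely many weights at each order in $t$, and the BCH formula preserves this finiteness on each weight slice). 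For each nonzero $c_m$, adjoin to $\mathfrak{D}_k$ a ray $\mathsf{w}_m = (m, m_0 - m \R_{\geq 0}, \exp(-c_m t^{k} z^m \partial_{n_m}))$, with the sign and direction chosen so that crossing this ray along $\gamma$ cancels the corresponding weight component of $X$. Call the enlarged diagram $\mathfrak{D}_{k+1}$.

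To verify that $\Theta_{\gamma,\mathfrak{D}_{k+1}} \equiv \mathrm{Id}$ modulo $t^{k+2}$, one uses that the newly inserted factors are of order $t^{k+1}$, so their mutual commutators and their commutators with previous factors contribute only at order $t^{k+2}$ or higher. Thus, modulo $t^{k+2}$, the path-ordered product simply picks up the sum of the new logarithms, which by construction cancels $X$. Independence of the chosen loop $\gamma$ follows because any two loops around the origin bound an annulus whose only singularity is the origin, and the two resulting path-ordered products are conjugate by trivial factors. For uniqueness (and hence minimality of $\mathfrak{D}_\infty$), observe that any consistent extension of $\mathfrak{D}_k$ to order $t^{k+1}$ by rays at the origin must kill each weight component of $X$ separately, and the ray direction $m_0 - m\R_{\geq 0}$ together with the requirement $\log \theta \in z^m \cdot(\mathsf{m}_t \otimes m^\perp)$ pins down the attached automorphism uniquely; hence $\mathfrak{D}_{k+1}$ is the unique minimal such extension, and the projective limit $\mathfrak{D}_\infty$ is unique and minimal.

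The main technical obstacle is the bookkeeping of the BCH expansion at each inductive step: one must check that adding a ray at order $t^k$ does not disturb the cancellations already arranged at orders $< k$, and that the finiteness of weights at each order is preserved under iterated brackets. Both points follow from the $\Lambda$-grading of $\mathfrak{h}$ together with the fact that $[\mathfrak{h}_{\geq a}, \mathfrak{h}_{\geq b}] \subseteq \mathfrak{h}_{\geq a+b}$ with respect to the $t$-adic filtration, so the argument reduces to a finite-dimensional computation at each order.
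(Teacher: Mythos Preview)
The paper does not actually prove this statement: it is quoted as a result of Kontsevich--Soibelman \cite{KS} and no argument is supplied. So strictly speaking there is no ``paper's own proof'' to compare against. Your argument is the standard order-by-order algebraic proof (as in \cite{KS} and \cite{GPS}): compute the monodromy $\Theta_{\gamma,\mathfrak{D}_k}$ modulo $t^{k+2}$, split its logarithm into $\Lambda$-weight components, and insert one ray per weight to kill each component. This is correct in outline; the only slip is that after writing $X\in t^{k+1}\mathfrak{h}$ you display the leading term as $c_m\,t^{k}z^m\partial_{n_m}$ and attach $\exp(-c_m t^{k}z^m\partial_{n_m})$ to the new ray --- both exponents should be $k+1$, otherwise the new rays would spoil the consistency already established at order $k$.

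What the paper \emph{does} do, and this is worth contrasting with your approach, is give an entirely different construction of $\mathfrak{D}_\infty$ (for the extended group $\tilde{\mathbb{V}}$) in Sections~\ref{sec:single wall}--\ref{sec:two_walls}. Rather than cancelling monodromy order by order in $t$, it encodes the two initial walls as Gaussian one-forms $\bar\Pi_1,\bar\Pi_2$, solves the Maurer--Cartan equation $\Phi=\bar\Pi-\tfrac12\mathbf{H}\{\Phi,\Phi\}_\sim$ via a sum over labelled ribbon trees, and then reads off the new rays from the $\hbar\to 0$ asymptotics of the unique normalised gauge $\varphi_a$ with $e^{\varphi_a}*0=\tilde\Phi_a$ (Theorem~\ref{thm:asymptotic_gauge}); consistency is then a monodromy argument on the universal cover of the punctured neighbourhood (Theorem~\ref{thm:consistentD}). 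Your inductive proof is shorter and purely algebraic; the paper's route is longer but is the point of the paper, since it exhibits $\mathfrak{D}_\infty$ as arising from genuine deformation theory of the pair $(\check X,E)$, which is what allows the link to $2d$--$4d$ wall-crossing in Section~\ref{sec:WCF}.
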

The diagram is minimal meaning that we do not consider rays with trivial automorphisms. 

\subsubsection{Extension of the tropical vertex group}
In \cite{MCscattering} the authors prove that some elements of the gauge group acting on $\Omega^{0,1}(\check{X},T^{1,0}\check{X})$ can be represented as elements of the tropical vertex group $\mathbb{V}$. Here we are going to define an extension of the Lie algebra $\mathfrak{h}$, which will be related with the infinitesimal generators of the gauge group acting on $\Omega^{0,1}(\check{X},\End E\oplus T^{1,0}\check{X})$.
Let $\mathfrak{gl}(r,\mathbb{C})$ be the Lie algebra of the Lie group $\GL(r,\mathbb{C})$, then we define 
\begin{equation}
\tilde{\mathfrak{h}}\defeq\bigoplus_{m\in \Lambda\smallsetminus\lbrace 0\rbrace}z^{m}\cdot\left(\mathsf{m}_t\mathfrak{gl}(r,\mathbb{C})\oplus \left(\mathsf{m}_t\otimes m^{\perp}\right)\right). 
\end{equation}
\begin{lemma}\label{lem:dgLa}
$\Big(\tilde{\mathfrak{h}} , [\cdot ,\cdot]_{\sim}\Big)$ is a Lie algebra, where the bracket $[\cdot,\cdot]_{\sim}$ is defined by: 
\begin{equation}\label{eq:Liebracket}
[(A,\partial_n)z^m , (A',\partial_{n'})z^{m'} ]_{\sim}\defeq([A,A']_{\mathfrak{gl}}z^{m+m'}+A'\langle m',n\rangle z^{m+m'}- A\langle m,n'\rangle z^{m+m'}, [z^m\partial_n,z^{m'}\partial_{n'}]_{\mathfrak{h}} ).
\end{equation}
\end{lemma}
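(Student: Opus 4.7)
The plan is to recognize $(\tilde{\mathfrak{h}}, [\cdot,\cdot]_\sim)$ as a semidirect product of two simpler Lie algebras, which reduces the axioms to a few short checks. Decompose $\tilde{\mathfrak{h}} = \mathcal{N} \oplus \mathfrak{h}$ as a vector space, where $\mathcal{N} \defeq \bigoplus_{m\in\Lambda\setminus\{0\}} z^m\,\mathsf{m}_t\mathfrak{gl}(r,\C)$ is the ``matrix part'' and $\mathfrak{h}$ is the tropical vertex Lie algebra already in play. Comparing with \eqref{eq:Liebracket}, the bracket $[\cdot,\cdot]_\sim$ takes exactly the semidirect product form $[(a,h),(a',h')] = ([a,a']_\mathcal{N} + \rho(h)(a') - \rho(h')(a),\,[h,h']_\mathfrak{h})$, once we equip $\mathcal{N}$ with the graded commutator $[A z^m, A' z^{m'}]_\mathcal{N} \defeq [A,A']_\mathfrak{gl}\,z^{m+m'}$ and define $\rho\colon\mathfrak{h}\to\End(\mathcal{N})$ by $\rho(z^m\partial_n)(A' z^{m'}) \defeq \langle m',n\rangle A' z^{m+m'}$. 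Antisymmetry of $[\cdot,\cdot]_\sim$ is then immediate from the antisymmetry of $[\cdot,\cdot]_\mathfrak{gl}$ and $[\cdot,\cdot]_\mathfrak{h}$ together with the sign flip of the two cross terms $\langle m',n\rangle A' - \langle m,n'\rangle A$ under exchange.

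Jacobi reduces to the three standard ingredients needed to form a semidirect product $\mathcal{N} \rtimes_\rho \mathfrak{h}$. First, $(\mathcal{N}, [\cdot,\cdot]_\mathcal{N})$ is itself a Lie algebra since $\mathfrak{gl}(r,\C)$ is and the $\Lambda$-grading is abelian. Second, each $\rho(z^m\partial_n)$ is a derivation of $\mathcal{N}$, as the Leibniz rule unpacks to the additivity $\langle m_1+m_2,n\rangle = \langle m_1,n\rangle + \langle m_2,n\rangle$. Third, $\rho$ is a Lie algebra homomorphism $\mathfrak{h}\to\Der(\mathcal{N})$: applying $[\rho(z^m\partial_n),\rho(z^{m'}\partial_{n'})] - \rho([z^m\partial_n,z^{m'}\partial_{n'}]_\mathfrak{h})$ to $A''z^{m''}$ and dividing out the common factor $A''z^{m+m'+m''}$ reduces to the scalar identity
\[
\langle m'+m'',n\rangle\langle m'',n'\rangle \;-\; \langle m+m'',n'\rangle\langle m'',n\rangle \;=\; \langle m'',\,\langle m',n\rangle n' - \langle m,n'\rangle n\rangle,
\]
which is a one-line verification using bilinearity (the $\langle m'',n\rangle\langle m'',n'\rangle$ cross-terms cancel). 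Given these three facts, the standard construction makes $\mathcal{N} \rtimes_\rho \mathfrak{h}$ into a Lie algebra whose bracket matches $[\cdot,\cdot]_\sim$ by inspection.

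The main obstacle I anticipate is the edge case $m' = -m$, where the formula naively produces a term of weight $z^0$ lying outside $\tilde{\mathfrak{h}}$. Under the orthogonality constraints $n\in m^\perp$ and $n'\in(-m)^\perp = m^\perp$, both $\langle m',n\rangle$ and $\langle m,n'\rangle$ vanish, and so does the $\mathfrak{h}$-component $[z^m\partial_n, z^{-m}\partial_{n'}]_\mathfrak{h}$; only the residual commutator $[A,A']_\mathfrak{gl}\,z^0$ survives. One handles this by declaring the bracket to be zero when $m+m' = 0$, consistent with the codomain $\tilde{\mathfrak{h}}$ prescribed in \eqref{eq:Liebracket}. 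In the applications to scattering diagrams later in the paper, the relevant $m$'s lie in a strictly convex cone, so the case $m' = -m$ never actually occurs inside any Jacobi triple under consideration and the truncation is invisible.
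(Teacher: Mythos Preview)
Your proof is correct and takes a genuinely different route from the paper. The paper's argument (in the appendix) is a direct brute-force verification: it expands $[[(A_1,\partial_{n_1})z^{m_1},(A_2,\partial_{n_2})z^{m_2}]_\sim,(A_3,\partial_{n_3})z^{m_3}]_\sim$ and its two cyclic permutations in full, then checks that the coefficients of $[A_i,A_j]_\mathfrak{gl}$ and of each $A_i$ vanish term by term. By contrast, you recognise $\tilde{\mathfrak{h}}$ as the semidirect product $\mathcal{N}\rtimes_\rho\mathfrak{h}$, which reduces Jacobi to the three structural checks you list (that $\mathcal{N}$ is a Lie algebra, that each $\rho(z^m\partial_n)$ is a derivation, and that $\rho$ is a Lie homomorphism). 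Your approach is more conceptual and explains \emph{why} the bracket works, whereas the paper's computation is self-contained but opaque. Your observation about the edge case $m'=-m$ and the resulting $z^0$-term is also a point the paper does not address; your handling of it (via the convex-cone support relevant to scattering diagrams) is appropriate for the applications in the paper.
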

The definition of the Lie bracket $[\cdot, \cdot]_\sim$ is closely related with the Lie bracket of $\KS(\check{X}, E)$ and we will explain it below, in \eqref{subsec:relation}. A proof of this lemma  can be found in the appendix (see \ref{proof:dgLa}).


\subsection{Fourier transform}

In order to construct scattering diagrams from deformations of holomorphic pairs, it is more convenient to work with a suitable Fourier transform $\FF$ of the dgLa $\KS(\check{X},E)$. Following \cite{MCv1} we start with the definition of $\FF(\KS(\check{X}, E))$. Let $\mathcal{L}$ be the space of fibre-wise homotopy classes of loops with respect to the fibration $p\colon X\to M$ and the zero section $s\colon M\to X$, 
\[\mathcal{L}=\bigsqcup_{x\in M}\pi_1(p^{-1}(x), s(x)).\]
Define a map ${ev}\colon \mathcal{L}\to X$, which maps a homotopy class $[\gamma]\in\mathcal{L}$ to $\gamma(0)\in X$ and define ${pr}\colon \mathcal{L}\to M$ the projection, such that the following diagram commutes:
\[
\begin{tikzcd}
\mathcal{L}\arrow{rr}{{ev}}\arrow[swap]{rd}{{pr}}& & X\arrow{ld}{p}\\
& M&
\end{tikzcd}
\] 
In particular $pr$ is a local diffeomorphism and on a contractible open subset $U\subset M$ it induces an isomorphism $\Omega^\bullet(U,TM)\cong\Omega^\bullet(U_{\textbf{m}}, T\mathcal{L})$, where $U_{\textbf{m}}\defeq \lbrace\textbf{m}\rbrace\times U\in pr^{-1}(U)$, $\textbf{m}\in\Lambda$. In addition, there is a one-to-one correspondence between $\Omega^0(U,T^{1,0}\check{X})$ and $\Omega^0(U,TM)$, 
\[
\frac{\partial}{\partial z_j}\longleftrightarrow\frac{\hbar}{4\pi}\frac{\partial}{\partial x_j}
\] 
which leads us to the following definition:
\begin{definition} 
The Fourier transform is a map $\FF\colon\Omega^{0,k}(\check{X},T^{1,0}\check{X})\ra\Omega^k(\mathcal{L},T\mathcal{L})$, such that  
\begin{equation}\label{def:F}
\big(\FF(\varphi)\big)_{\textbf{m}}(x)\defeq\bigg(\frac{4\pi}{\hbar}\bigg)^{|I|-1}\int_{\check{p}^{-1}(x)}\varphi^{I}_j(x,\check{y})e^{-2\pi i(\textbf{m},\check{y})}d\check{y}\,\, dx_{I}\otimes \frac{\partial}{\partial x_j},
\end{equation}
where $\textbf{m}\in\Lambda$ represents an affine loop in the fibre $p^{-1} (x)$ with tangent vector $\sum_{j=1}^2m_j\frac{\partial}{\partial y_j}$ and $\varphi$ is locally given by $\varphi=\varphi^{I}_j(x,\check{y})d\bar{z}_I\otimes\frac{\partial}{\partial z_j}$, $|I|=k$.
\end{definition}
The inverse Fourier transform is then defined by the following formula, providing the coefficients have enough regularity: 
\begin{equation}
\FF^{-1}\big(\alpha\big)(x,\check{y})=\bigg(\frac{4\pi}{\hbar}\bigg)^{-|I|+1}\sum_{\textbf{m}\in\Lambda}\alpha_{j,\textbf{m}}^I e^{2\pi i(\textbf{m},\check{y})} d\bar{z}_I\otimes \frac{\partial}{\partial z_j}
\end{equation}
where $\alpha_{j,\textbf{m}}^I(x)dx_I\otimes\frac{\partial}{\partial x^j}\in\Omega^k(U_{\textbf{m}},T\mathcal{L})$ is the $\textbf{m}$-th Fourier coefficient of $\alpha\in\Omega^k(\mathcal{L},T\mathcal{L})$ and $|I|=k$.


The Fourier transform can be extended to $\KS(\check{X},E)$ as a map
\[\FF\colon\Omega^{0,k}(\check{X},\End E\oplus T^{1,0}\check{X})\ra\Omega^{k}(\mathcal{L}, \End E\oplus T\mathcal{L})\] 
\begin{equation}
\begin{split}
\FF\left(\left(A^{I}d\bar{z}_I,\varphi^{I}_jd\bar{z}_I\otimes\frac{\partial}{\partial z_j}\right)\right)_{\textbf{m}}\defeq&\left(\frac{4\pi}{\hbar}\right)^{|I|}\Big(\int_{\check{p}^{-1}(x)}A^{I}(x,\check{y})e^{-2\pi i (\textbf{m},\check{y})}d\check{y} dx_I,\\
&\left(\frac{4\pi}{\hbar}\right)^{-1}\int_{\check{p}^{-1}(x)}\varphi^{I}_j(x,\check{y})e^{-2\pi i(\textbf{m},\check{y})}d\check{y} dx_{I}\otimes\frac{\partial}{\partial x_j}\Big)
\end{split}
\end{equation} 
where the first integral is meant on each matrix element of $A^I$.

\subsection{Symplectic dgLa} In order to define a dgLa isomorphic to $\KS(\check{X},E)$, we introduce the so called Witten differential $d_W$ and the Lie bracket $\lbrace\cdot, \cdot\rbrace_\sim$, acting on $\Omega^\bullet(\mathcal{L},\End E\oplus T\mathcal{L})$. It is enough for us to consider the case in which $E$ is holomorphically trivial $E=\O_{\check{X}}\oplus\O_{\check{X}}\oplus\cdots\oplus\O_{\check{X}}$ and the hermitian metric $h_E$ is diagonal, $h_E=diag(e^{-\phi_1},\cdots, e^{-\phi_r)}$ and $\phi_j\in\Omega^0(\check{X})$, $j=1,\cdots,r$.  The differential $d_W$ is defined as follows:
\begin{equation*}
\begin{split}
d_W&\colon\Omega^k(\mathcal{L},\End E\oplus T\mathcal{L})\to\Omega^{k+1}(\mathcal{L},\End E\oplus T\mathcal{L})\\
d_W&\defeq\begin{pmatrix}
d_{W,E} & \hat{B}\\
0 & d_{W,\mathcal{L}}
\end{pmatrix}.
\end{split}
\end{equation*} 
In particular, $d_{W,E}$ is defined as:
\begin{equation}
\begin{split}
\left(d_{W,E}\left(A^Jdx_J\right)\right)_{\textbf{n}}&\defeq\FF(\bar{\partial}_E(\FF^{-1}(A^Jdx_J))_{\textbf{n}}\\
&=\FF\left(\bar{\partial}_E\left(\left(\frac{4\pi}{\hbar}\right)^{-|J|}\sum_{\textbf{m}\in\Lambda}e^{2\pi i(\textbf{m},\check{y})}A^J_{\textbf{m}}d\bar{z}_J\right)\right)_{\textbf{n}} \\
&=\left(\frac{4\pi}{\hbar}\right)^{-|J|}\FF\left(\sum_{\textbf{m}}e^{2\pi i(\textbf{m},\check{y})}\left(2\pi i\textbf{m}_kA^J_{\textbf{m}}+i\hbar\frac{\partial A^J_{\textbf{m}}}{\partial x_k}\right)d\bar{z}_k\wedge d\bar{z}_J\right)_{\textbf{n}} \\
&=\frac{4\pi}{\hbar}\int_{\check{p}^{-1}(x)}\left(\sum_{\textbf{m}}e^{2\pi i(\textbf{m},\check{y})}\left(2\pi i\textbf{m}_kA^J_{\textbf{m}}+i\hbar\frac{\partial A^J_{\textbf{m}}}{\partial x_k}\right)e^{-2\pi i(\textbf{n},\check{y})}\right)d\check{y} dx_k\wedge dx_J\\
&=\frac{4\pi}{\hbar}\left(2\pi i\textbf{n}_kA^J_{\textbf{n}}+i\hbar\frac{\partial A^J_{\textbf{n}}}{\partial x_k}\right)dx_k\wedge dx_J.
\end{split}
\end{equation} 
The operator $\hat{B}$ is then defined by
\begin{equation}
\begin{split}
\left(\hat{B}(\psi_{j}^I dx_I\otimes\frac{\partial}{\partial x_j})\right)_{\textbf{n}}&\defeq\FF(\FF^{-1}(\psi_{j}^I dx_I\otimes\frac{\partial}{\partial x_j})\lrcorner F_E)\\
&=\bigg(\frac{4\pi}{\hbar}\bigg)^{1-|I|}\FF\Big(\sum_\textbf{m}\psi_{\textbf{m},j}^Ie^{2\pi i(\textbf{m},\check{y})}d\bar{z}_I\otimes\frac{\partial}{\partial z_j}\lrcorner F_{pq}(\phi)dz_p\wedge d\bar{z}_q)\Big)\\
&=\bigg(\frac{4\pi}{\hbar}\bigg)^{1-|I|}\bigg(\frac{4\pi}{\hbar}\bigg)^{1+|I|}\int_{\check{p}^{-1}(x)}\sum_\textbf{m}\psi_{\textbf{m},j}^Ie^{2\pi i (\textbf{m}-\textbf{n},\check{y})}F_{jq}(\phi)d\check{y} dx_I\wedge dx_q 
\end{split}
\end{equation}
where $F_{pq}(\phi)$ is the curvature matrix. Then the $d_{W,\mathcal{L}}$ is defined by:
\begin{equation}
\left(d_{W,\mathcal{L}}(\psi_{j}^I dx_I\otimes\frac{\partial}{\partial x_j})\right)_{\textbf{n}}\defeq e^{-2\pi\hbar^{-1}({\textbf{n}},x)}d\left(\psi_{j}^I dx_I\otimes\frac{\partial}{\partial x_j}e^{2\pi\hbar^{-1}({\textbf{n}},x)}\right),
\end{equation}
where $d$ is the differential on the base $M$. Notice that by definition $d_W=\FF(\bar{\partial})\FF^{-1}$. Analogously we define the Lie bracket $\lbrace\cdot ,\cdot\rbrace_\sim\defeq\FF([\cdot,\cdot]_\sim)\FF^{-1}$. If we compute it explicitly in local coordinates we find:
\begin{align*}
\lbrace\cdot ,\cdot\rbrace_\sim&\colon\Omega^p(\mathcal{L},\End E\oplus T\mathcal{L})\times\Omega^q(\mathcal{L},\End E\oplus T\mathcal{L})\to\Omega^{p+q}(\mathcal{L},\End E\oplus T\mathcal{L})\\
\lbrace(A,\varphi),(N,\psi)\rbrace_{\sim }&= \big(\lbrace A,N\rbrace+\textbf{ad}(\varphi,N)-(-1)^{pq}\textbf{ad}(\psi,A), \lbrace\varphi, \psi\rbrace\big).
\end{align*}

In particular locally on $U_{\textbf{m}}$ we consider \[(A,\varphi)=\big(A_{\textbf{m}}^Idx_I,\varphi_{j,\textbf{m}}^{I}dx_{I}\otimes\frac{\partial}{\partial x_j}\big)\in\Omega^p(U_{\textbf{m}},\End E\oplus T\mathcal{L})\] and on $U_{\textbf{m}'}$ we consider \[(N,\psi)=(N_{\textbf{m}'}^Jdx_J,\psi_{J,\textbf{m}'}^{J}dx_{J}\otimes\frac{\partial}{\partial x\_{\textbf{m}}})\in\Omega^q(U_{\textbf{m}'},\End E\oplus T\mathcal{L})\] then   
\begin{equation}
\lbrace A,N\rbrace_{\textbf{n}}\defeq \sum_{\textbf{m}+\textbf{m'}=\textbf{n}}[A_{\textbf{m}}^I,N_{\textbf{m'}}^J]dx_I\wedge dx_J
\end{equation}
where the sum over $\textbf{m}+\textbf{m'}=\textbf{n}$ makes sense under the assumption of enough regularity of the coefficients. The operator $\textbf{ad}\colon\Omega^{p}(\mathcal{L},T\mathcal{L})\times\Omega^q(\mathcal{L},\End E)\to \Omega^{p+q}(\mathcal{L},\End E)$ is explicitly 
\begin{multline*}
\left(\textbf{ad}\left(\varphi,N\right)\right)_{\textbf{n}}\defeq\frac{4\pi}{\hbar}\Big(\int_{\check{p}^{-1}(x)}\Big(\sum_{\textbf{m}+\textbf{m}'=\textbf{n}}\varphi_{I,\textbf{m}}^j \Big(\frac{\partial N_{\textbf{m}'}^J}{\partial z_j}+2\pi i{\textbf{m}'}_j+A_j(\phi)N_{\textbf{m}'}^{J}\Big)\cdot\\
\cdot e^{2\pi i(\textbf{m}+\textbf{m}'-\textbf{n},\check{y})}\Big)d\check{y}\Big) dx_{I}\wedge dx_{J}
\end{multline*}
where $A_j(\phi)dz_j$ is the connection $\nabla^E$ one-form matrix. Finally the Lie bracket $\lbrace\psi,\phi\rbrace_\sim$ is 
\begin{equation}
\begin{split}
\lbrace\varphi, \psi\rbrace_{\textbf{n}}&\defeq\bigg(\sum_{\textbf{m}'+\textbf{m}=\textbf{n}}e^{-2\pi\hbar^{-1}(\textbf{n},x)}\Big(\varphi_{j,\textbf{m}}^I e^{2\pi (\textbf{m},x)}\nabla_{\frac{\partial}{\partial x_j}}\big(e^{2\pi (\textbf{m}',x)}\psi_{k,\textbf{m}'}^J\frac{\partial}{\partial x_k}\big)\\
&-(-1)^{pq}\psi_{k,\textbf{m}'}^Je^{2\pi\hbar^{-1} (\textbf{m}',x)}\nabla_{\frac{\partial}{\partial x_k}}\big(e^{2\pi\hbar^{-1}(\textbf{m},x)}\varphi_{j,\textbf{m}}^I\frac{\partial}{\partial x_j}\big)\Big)\bigg)dx_I\wedge dx_J
\end{split}
\end{equation}
where $\nabla$ is the flat connection on $M$.

\begin{definition}\label{def:mirror dgLa} 
The symplectic dgLa is defined as follows:
\begin{equation*}
G\defeq\left(\Omega^\bullet(\mathcal{L},\End E\oplus T\mathcal{L}), d_W, \lbrace\cdot, \cdot\rbrace_\sim\right)
\end{equation*}
and it is isomorphic to $\KS(\check{X},E)$ via $\FF$. 
\end{definition} 
As we mention above, the gauge group on the symplectic side $\Omega^0(\mathcal{L},\End E\oplus T\mathcal{L})$ is related with the extended Lie algebra $\tilde{\mathfrak{h}}$. However to figure it out, some more work has to be done, as we prove in the following subsection.  
\subsubsection{Relation with the Lie algebra $\tilde{\mathfrak{h}}$}\label{subsec:relation}

Let $Aff_M^{\Z}$ be the sheaf of affine linear transformations over $M$ defined for any open affine subset $U\subset M$ by $f_{{m}}(x)=({m},x)+b\in Aff_M^{\Z}(U)$ where $x\in U$, $m\in\Lambda$ and $b\in\R$. Since there is an embedding of $Aff_M^{\Z}(U)$ into $\O_{\check{X}}(\check{p}^{-1}(U))$ which maps $f_{{m}}(x)=({m},x)+b\in Aff_M^{\Z}(U)$ to $e^{2\pi i({m}, z)+2\pi i b}\in\O_{\check{X}}(\check{p}^{-1}(U))$, we define $\O_{aff}$ the image sub-sheaf of $Aff_M^{\Z}$ in $\O_{\check{X}}$. Then consider the embedding of the dual lattice $\Lambda^*\into T^{1,0}\check{X}$ which maps
\[
n\to n^j\frac{\partial}{\partial z_j}=:\check{\partial}_n.\] 
It follows that the Fourier transform $\FF$ maps \[\left(Ne^{2\pi i((m,z)+b)}, e^{2\pi i((m,z)+b)}\check{\partial}_n\right)\in\O_{aff}\left(\check{p}^{-1}(U), \mathfrak{gl}(r,\C)\oplus T^{1,0}\check{X} \right)\] to \[\left(Ne^{2\pi ib}\mathfrak{w}^m, \frac{\hbar}{4\pi}e^{2\pi i b}\mathfrak{w}^mn^j\frac{\partial}{\partial x_j}\right)\in\mathfrak{w}^m\cdot \underline{\C}\left(U_m,\mathfrak{gl}(r,\C)\oplus T\mathcal{L}\right)\] where $\mathfrak{w}^m\defeq\FF(e^{2\pi i({m}, z)})$, i.e. on $U_k$
\[
\mathfrak{w}^m=\begin{cases} 
e^{2\pi\hbar^{-1} (m,x)} & \text{if $k=m$}\\
0 & \text{if $k\neq m$}
\end{cases}
\] and we define \[\partial_n\defeq \frac{\hbar}{4\pi}n^j\frac{\partial}{\partial x_j}.\]
Let $\mathcal{G}$ be the sheaf over $M$ defined as follows: for any open subset $U\subset M$
\[
\mathcal{G}(U)\defeq\bigoplus_{m\in\Lambda\setminus 0}\mathfrak{w}^m\cdot\underline{\C}(U,\mathfrak{gl}(r,\C)\oplus TM).
\]  
In particular $\tilde{\mathfrak{h}}$ is a subspace of $\mathcal{G}(U)$ once we identify $z^m$ with $\mathfrak{w}^m$ and $m^\perp$ with $\partial_{m^\perp}$. In order to show how the Lie bracket on $\tilde{\mathfrak{h}}$ is defined, we need to make another assumption on the metric: assume that the metric $h_E$ is constant along the fibres of $\check{X}$, i.e. in an open subset $U\subset M$ $\phi_j=\phi_j(x_1,x_2)$, $j=1,\cdots, r$. Hence, the Chern connection becomes $\nabla^E=d+\hbar A_j(\phi)dz_j$ while the curvature becomes $F_E=\hbar^2 F_{jk}(\phi)dz_j\wedge dz_k$. 
We now show $\mathcal{G}(U)$ is a Lie sub-algebra of $\left(\Omega^0(pr^{-1}(U)),\End E\oplus T\mathcal{L}), \lbrace\cdot,\cdot\rbrace_\sim\right)\subset G(U)$ and we compute the Lie bracket $\lbrace\cdot,\cdot\rbrace_{\sim}$ explicitly on functions of $\mathcal{G}(U)$.

\begin{multline*}
\lbrace \left( A\mathfrak{w}^m, \mathfrak{w}^m\partial_n\right), \left(N\mathfrak{w}^{m'}, \mathfrak{w}^{m'}\partial_{n'}\right)\rbrace_\sim=\Big([A,N]\mathfrak{w}^{m+m'}+\textbf{ad}(\mathfrak{w}^m\partial_n,N\mathfrak{w}^{m'})-\textbf{ad}(\mathfrak{w}^{m'}\partial_{n'},A\mathfrak{w}^{m}),\\
\lbrace\mathfrak{w}^m\partial_n,\mathfrak{w}^{m'}\partial_{n'}\rbrace\Big)
\end{multline*} 
\begin{equation}
\begin{split}
\textbf{ad}(\mathfrak{w}^m\partial_n,N\mathfrak{w}^{m'})_s&=\frac{4\pi}{\hbar}\sum_{k+k'=s}\mathfrak{w}^m\frac{\hbar}{4\pi} n^j\left(\frac{\partial N\mathfrak{w}^{m'}}{\partial x_j}+2\pi im'_j+\hbar A_j(\phi)N\right)\\
&=\mathfrak{w}^{m+m'}n^j\left( 2\pi im'_j+i\hbar\frac{\partial\phi}{\partial x_j}N\right)\\
&=\mathfrak{w}^{m+m'}\left(2\pi i\langle m',n\rangle+i\hbar n^jA_j(\phi)N\right)
\end{split}
\end{equation}
where in the second step we use the fact that $\mathfrak{w}^m$ is not zero only on $U_m$, and in the last step we use the pairing of $\Lambda$ and $\Lambda^*$ given by $\langle m,n'\rangle =\sum_jm_jn'^j $. Thus
\begin{equation*}
\begin{split}
\lbrace \left( A\mathfrak{w}^m, \mathfrak{w}^m\partial_n\right), \left(N\mathfrak{w}^{m'}, \mathfrak{w}^{m'}\partial_{n'}\right)\rbrace_\sim &=([A,N]\mathfrak{w}^{m+m'}+N(2\pi im'_j)n^j\mathfrak{w}^{m+m'}+\\
&+\hbar NA_j(\phi)n^j\mathfrak{w}^{m+m'}-A(2\pi im_j){n'}^j \mathfrak{w}^{m+m'}+\\
&-\hbar A A_j(\phi){n'}^j\mathfrak{w}^{m+m'},\lbrace\mathfrak{w}^m\partial_n,\mathfrak{w}^{m'}\partial_{n'}\rbrace)
\end{split}
\end{equation*} 
Taking the limit as $\hbar\to 0$, the Lie bracket $\lbrace\left( A\mathfrak{w}^m, \mathfrak{w}^m\partial_n\right), \left(N\mathfrak{w}^{m'}, \mathfrak{w}^{m'}\partial_{n'}\right)\rbrace_\sim$ converges to
\begin{equation*}
\left([A,N]\mathfrak{w}^{m+m'}+2\pi i\langle m',n\rangle N\mathfrak{w}^{m+m'}-2\pi i\langle m,n'\rangle A\mathfrak{w}^{m+m'}, \lbrace\mathfrak{w}^m\partial_n,\mathfrak{w}^{m'}\partial_{n'}\rbrace\right)
\end{equation*} 
and we finally recover the definition of the Lie bracket of $[\cdot,\cdot]_{\tilde{\mathfrak{h}}}$ \eqref{eq:Liebracket}, up to a factor of $2\pi i$. Hence $(\tilde{\mathfrak{h}},[\cdot,\cdot]_\sim)$ is the \textit{asymptotic} subalgebra of $\left(\Omega^0(pr^{-1}(U)),\End E\oplus T\mathcal{L}), \lbrace\cdot,\cdot\rbrace_\sim\right)$.

\section{Deformations associated to a single wall diagram}\label{sec:single wall}
In this section we are going to construct a solution of the Maurer-Cartan equation from the data of a single wall. We work locally on a contractible, open affine subset $U\subset M$.

Let $(m,P_m,\theta_m)$ be a wall and assume $\log(\theta_m)=\sum_{j,k}\big(A_{jk}t^j\mathfrak{w}^{km}, a_{jk}t^j\mathfrak{w}^{km}\partial_n\big)$, where $A_{jk}\in{\mathfrak{gl}(r,\C)}$ and $a_{jk}\in\C$, for every $j,k$. 

\begin{notation}
We need to introduce a suitable set of local coordinates on $U$, namely $(u_m,u_{m,\perp})$, where $u_m$ is the coordinate in the direction of $P_m$, while $u_{m^\perp}$ is normal to $P_m$, according with the orientation of $U$. We further define $H_{m,+}$ and $H_{m,-}$ to be the half planes in which $P_m$ divides $U$, according with the orientation. 
\end{notation}

\begin{notation}
We will denote by the superscript \textit{CLM} the elements already introduced in \cite{MCscattering}. 
\end{notation}

\subsection{Ansatz for a wall}\label{sec:ansatz}

Let $\delta_m\defeq\frac{e^{-\frac{u_{m^\perp}^2}{\hbar}}}{\sqrt{\pi \hbar}}du_{m^\perp}$ be a normalized Gaussian one-form, which is \textit{supported} on $P_m$.
Then, let us define \[\Pi\defeq(\Pi_E,\Pi^{CLM})\]
where $\Pi_E=-\sum_{j,k}A_{jk}t^j\delta_m\mathfrak{w}^{km}$ and $\Pi^{CLM}=-\sum_{j,k\geq 1}a_{kj}\delta_mt^j\mathfrak{w}^{km}\partial_n$. 

From section 4 of \cite{MCscattering} we are going to recall the definition of \textit{generalized Sobolev space} suitably defined to compute the asymptotic behaviour of Gaussian k-forms like $\delta_m$ which depend on $\hbar$. 
Let $\Omega_{\hbar}^k(U)$ denote the set of $k$-forms on $U$ whose coefficients depend on the real positive parameter $\hbar$.  
\begin{definition}[Definition 4.15 \cite{MCscattering}]
\[\mathcal{W}_k^{-\infty}(U)\defeq\big\lbrace \alpha\in\Omega_{\hbar}^k(U)\vert \forall q\in U \exists V\subset U\,,q\in V\,\text{s.t.} \sup_{x\in V}\left|\nabla^j\alpha(x)\right|\leq C(j,V)e^{-\frac{c_V}{\hbar}}, C(j,V),\textcolor{britishracinggreen}{c_V}>0\big\rbrace\] is the set of exponential k-forms.
\end{definition}
\begin{definition}[Definition 4.16 \cite{MCscattering}]
\[\mathcal{W}_k^\infty(U)\defeq\big\lbrace \alpha\in\Omega_{\hbar}^k(U)\vert \forall q\in U \exists  V\subset U\,,q\in V\,\text{s.t.} \sup_{x\in V}\left|\nabla^j\alpha(x)\right|\leq C(j,V)\hbar^{-N_{j,V}},\, C(j,V),N_{j,V}\in\Z_{>0}\big\rbrace\] is the set of polynomially growing k-forms.
\end{definition}

\begin{definition}[Definition 4.19 \cite{MCscattering}]
Let $P_m$ be a ray in $U$. The set $\mathcal{W}_{P_m}^s(U)$ of $1$-forms $\alpha$ which have \textit{asymptotic support} of order $s\in\Z$ on $P_m$ is defined by the following conditions:
\begin{enumerate}
\item for every $q_*\in U\setminus P_m$, there is a neighbourhood $V\subset U\setminus P_m$ such that $\alpha|_V\in\mathcal{W}_{1}^{-\infty}(V)$;
\item for every $q_*\in P_m$ there exists a neighbourhood $q_*\in W\subset U$ where in local coordinates $u_q=(u_{q,m},u_{q,m^\perp})$ centred at $q_*$, $\alpha$ decomposes as
\[ 
\alpha=f(u_q,\hbar)du_{q,m^\perp}+\eta
\]
$\eta\in\mathcal{W}_{1}^{-\infty}(W)$ and for all $j\geq 0$ and for all $\beta\in\Z_{\geq 0}$
\begin{equation}\label{eq:stima W^s}
\int_{(0,u_{q,m^\perp})\in W}(u_{m^\perp})^\beta\big(\sup_{(u_{q,m},u_{m^\perp})\in W}\left|\nabla^j(f(u_q,\hbar))\right|\big)du_{m^\perp}\leq C(j,W,\beta)\hbar^{-\frac{j+s-\beta-1}{2}}
\end{equation}
for some positive constant $C(\beta, W,j)$.
\end{enumerate}
\end{definition}

\begin{remark}
A simpler way to figure out what is the space $\mathcal{W}_{P_m}^s(U)$, is to understand first the case of a $1$-form $\alpha\in\Omega_{\hbar}^1(U)$ which depends only on the coordinate $u_{m^\perp}$. Indeed $\alpha=\alpha(u_{m^\perp},\hbar)du_{m^\perp}$ has asymptotic support of order $s$ on a ray $P_m$ if for every $q\in P_m$, there exists a neighbourhood $q\in W\subset U$ such that
\[
\int_{(0,u_{q,m^\perp})\in W} u_{q,m^\perp}^{\beta} \left|\nabla^j \alpha(u_{q,m^\perp},\hbar)\right|du_{q,m^\perp} \leq C(W,\beta,j)\hbar^{-\frac{\beta+s-1-j}{2}}
\]
for every $\beta\in\Z_{\geq 0}$ and $j\geq 0$.

In particular for $\beta=0$ the estimate above reminds to the definition of the usual Sobolev spaces $L_1^j(U)$.  
\end{remark}

\begin{lemma}
The one-form $\delta_m$ defined above, has asymptotic support of order $1$ along $P_m$, i.e. $\delta_m\in\mathcal{W}_{P_m}^1(U)$. 
\end{lemma}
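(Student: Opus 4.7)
My plan is to verify the two defining conditions of Definition 4.19 directly from the explicit formula for $\delta_m$, using the change of variables $v=u_{m^\perp}/\sqrt{\hbar}$ to convert all $\hbar$-dependence into an overall power.

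First I would check condition $(1)$. For a point $q_*\in U\setminus P_m$, the function $u_{m^\perp}$ is bounded below in absolute value by some $c>0$ on a small neighbourhood $V\ni q_*$. Each derivative $\nabla^j\delta_m$ is of the form $\hbar^{-(j+1)/2}\,p_j(u_{m^\perp}/\sqrt{\hbar})\,e^{-u_{m^\perp}^2/\hbar}\,du_{m^\perp}$ for a polynomial $p_j$; since $u_{m^\perp}^2\geq c^2$ on $V$, the Gaussian dominates all polynomial factors in $\hbar^{-1}$, yielding an estimate of the form $C(j,V)e^{-c_V/\hbar}$. Hence $\delta_m|_V\in\mathcal{W}_1^{-\infty}(V)$.

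Next I would verify condition $(2)$ near a point $q_*\in P_m$. Since the local coordinates $u_q=(u_{q,m},u_{q,m^\perp})$ are centred on $P_m$ and aligned with $(u_m,u_{m^\perp})$, one has $u_{q,m^\perp}=u_{m^\perp}$, so $\delta_m=f(u_{q,m^\perp},\hbar)\,du_{q,m^\perp}$ with $\eta=0$ and $f=(\pi\hbar)^{-1/2}e^{-u_{q,m^\perp}^2/\hbar}$. In particular $f$ is independent of $u_{q,m}$, so only derivatives in the $u_{q,m^\perp}$ direction contribute. A direct computation gives
\begin{equation*}
\nabla^j f \;=\; \hbar^{-(j+1)/2}\, p_j\!\left(u_{q,m^\perp}/\sqrt{\hbar}\right) e^{-u_{q,m^\perp}^2/\hbar}
\end{equation*}
for an explicit polynomial $p_j$ (essentially a Hermite polynomial). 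Since $p_j(v)e^{-v^2/2}$ is bounded on $\R$, there is a constant $C_j>0$ with $|p_j(v)|e^{-v^2}\leq C_j e^{-v^2/2}$.

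Finally I would perform the substitution $v=u_{q,m^\perp}/\sqrt{\hbar}$, so that $du_{q,m^\perp}=\sqrt{\hbar}\,dv$ and $u_{q,m^\perp}^\beta=\hbar^{\beta/2}v^\beta$. Choosing $W$ so that the $u_{q,m^\perp}$-slice is contained in some interval $(-R,R)$, we obtain
\begin{equation*}
\int (u_{q,m^\perp})^\beta\,\sup_{u_{q,m}}|\nabla^j f|\,du_{q,m^\perp}
\;\leq\; C_j\,\hbar^{(\beta-j)/2}\int_{\R} |v|^\beta e^{-v^2/2}\,dv
\;=\; C(j,W,\beta)\,\hbar^{-(j-\beta)/2},
\end{equation*}
which is precisely the bound $\hbar^{-(j+s-\beta-1)/2}$ with $s=1$. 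The only genuinely subtle point, which I would treat as the main (though mild) obstacle, is keeping track of the fact that all mixed derivatives not purely in $u_{q,m^\perp}$ vanish (so the $\sup$ over $u_{q,m}$ contributes nothing extra) and that the polynomial-times-Gaussian factor is uniformly integrable independently of $\hbar$ after rescaling; everything else is routine Gaussian calculus.
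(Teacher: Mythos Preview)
Your proof is correct and proceeds somewhat differently from the paper. The paper focuses only on the integral estimate \eqref{eq:stima W^s} and establishes it by a double induction: first on $\beta$ (at $j=0$) and then on $j$, each step using integration by parts to reduce to the previous case. Your argument instead makes the single change of variables $v=u_{m^\perp}/\sqrt{\hbar}$ and exploits the fact that $\nabla^j$ of a Gaussian is a Hermite-type polynomial times the Gaussian, so that all the $\hbar$-dependence factors out at once and the remaining integral $\int_{\R}|v|^\beta|p_j(v)|e^{-v^2}\,dv$ is finite and $\hbar$-independent. This is more conceptual and avoids the bookkeeping of two inductions; the paper's approach has the mild advantage of being entirely elementary, never invoking the Hermite structure. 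You also verify condition~$(1)$ of Definition~4.19 (exponential decay away from $P_m$) explicitly, which the paper leaves implicit.
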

\begin{proof}
We claim that
\begin{equation}\label{eq:estimate}
\int_{-a}^b(u_{m^\perp})^\beta \nabla^j\left(\frac{e^{-\frac{u_{m^\perp}^2}{\hbar}}}{\sqrt{\hbar\pi}}\right)du_{m^\perp}\leq C(\beta,W,j)\hbar^{-\frac{j-\beta}{2}}
\end{equation}
for every $j\geq 0$, $\beta\in\Z_{\geq 0}$, for some $a,b> 0$.
This claim holds for $\beta=0=j$, indeed $\int_{-a}^b\frac{e^{-\frac{u_{m^\perp}^2}{\hbar}}}{\sqrt{\hbar\pi}}du_{m^\perp}$ is bounded by a constant $C=C(a,b)>0$. 

Then we prove the claim by induction on $\beta$, at $\beta=0$ it holds true by the previous computation. Assume that
\begin{equation}
\int_{-a}^b(u_{m\perp})^\beta \frac{e^{-\frac{u_{m^\perp}^2}{\hbar}}}{\sqrt{\hbar\pi}}du_{m^\perp} \leq C(\beta,a,b)\hbar^{\beta/2}
\end{equation} 
holds for $\beta$, then 
\begin{equation}
\begin{split}
\int_{-a}^b(u_{m^\perp})^{\beta+1}\frac{e^{-\frac{u_{m^\perp}^2}{\hbar}}}{\sqrt{\hbar\pi}}du_{m^\perp}&=-\frac{\hbar}{2}\int_{-a}^b(u_{m^\perp})^\beta\left(-2\frac{u_{m^\perp}}{\hbar}\frac{e^{-\frac{u_{m^\perp}^2}{\hbar}}}{\sqrt{\hbar\pi}}\right)du_{m^\perp}\\
&=-\frac{\hbar}{2}\left[(u_{m^\perp})^\beta\frac{e^{-\frac{u_{m^\perp}^2}{\hbar}}}{\sqrt{\hbar\pi}}\right]_{-a}^b +\beta\frac{\hbar}{2}\int_{-a}^b(u_{m^\perp})^{\beta-1}\frac{e^{-\frac{u_{m^\perp}^2}{\hbar}}}{\sqrt{\hbar\pi}}du_{m^\perp}\\
&\leq C(\beta, a,b)\hbar^{\frac{1}{2}}+\tilde{C}(\beta,a,b)\hbar^{1+\frac{\beta-1}{2}}\\
&\leq C(a,b,\beta)\hbar^{\frac{\beta+1}{2}}.
\end{split}
\end{equation}
Analogously let us prove the estimate by induction on $j$. At $j=0$ it holds true, and assume that 
\begin{equation}
\int_{-a}^b(u_{m^\perp})^\beta \nabla^j\left( \frac{e^{-\frac{u_{m^\perp}^2}{\hbar}}}{\sqrt{\hbar\pi}}\right)du_{m^\perp}\leq C(a,b,\beta,j)\hbar^{-\frac{j-\beta}{2}}
\end{equation} 
holds for $j$. Then at $j+1$ we have the following
\begin{equation*}
\begin{split}
\int_{-a}^b(u_{m^\perp})^\beta\nabla^{j+1}\left(\frac{e^{-\frac{u_{m^\perp}^2}{\hbar}}}{\sqrt{\hbar\pi}}\right)du_{m^\perp}&=\left[u_{m^\perp}^{\beta}\nabla^j\left(\frac{e^{-\frac{u_{m^\perp}^2}{\hbar}}}{\sqrt{\hbar\pi}}\right)\right]_{-a}^b-\beta\int_{N_V}(u_{m^\perp})^{\beta-1}\nabla^j\left(\frac{e^{-\frac{u_{m^\perp}^2}{\hbar}}}{\sqrt{\hbar\pi}}\right)du_{m^\perp}\\
&\leq \tilde{C}(\beta,a,b,j)\hbar^{-j-\frac{1}{2}}+ C(a,b,\beta, j)\hbar^{-\frac{j-\beta+1}{2}}\\
&\leq C(a,b,\beta,j)\hbar^{-\frac{j+1-\beta}{2}}
\end{split}
\end{equation*}
This ends the proof.  
\end{proof}

\begin{notation}
We say that a function $f(x,\hbar)$ on an open subset $U\times\R_{\geq 0}\subset M\times\R_{\geq 0}$ belongs to $O_{loc}(\hbar^l)$ if it is bounded by $C_K\hbar^l$ on every compact subset $K\subset U$, for some constant $C_K$ (independent on $\hbar$), $l\in\R$. 
\end{notation}
In order to deal with $0$-forms ``asymptotically supported on $U$'', we define the following space $\mathcal{W}_{0}^{s}$: 
\begin{definition}\label{def:I(P)}
A function $f(u_q,\hbar)\in\Omega^0_{\hbar}(U)$ belongs to $\mathcal{W}_{0}^{s}(U)$ if and only if for every $q_*\in U$ there is a neighbourhood $q_*\in W\subset U$ such that  
\[\sup_{q\in W}\left|\nabla^j f(u_q,\hbar)\right|\leq C(W,j)\hbar^{-\frac{s+j}{2}}\]
for every $j\geq 0$.  

\end{definition}
 
\begin{notation}
Let us denote by $\Omega_{\hbar}^k(U,TM)$ the set of $k$-forms valued in $TM$, which depends on the real parameter $\hbar$ and analogously we denote by $\Omega_{\hbar}^k(U,\End E)$ the set of $k$-forms valued in $\End E$ which also depend on $\hbar$. 
We say that $\alpha=\alpha_K(x,\hbar)dx^K\otimes\partial_n\in\Omega_{\hbar}^k(U,TM)$ belongs to $\mathcal{W}_P^s(U,TM)/ \mathcal{W}_k^\infty(U, TM)/ \mathcal{W}_k^{-\infty}(U, TM)$ if $\alpha_K(x,\hbar)dx^K\in \mathcal{W}_P^s(U)/ \mathcal{W}_k^\infty(U)/ \mathcal{W}_k^{-\infty}(U)$. Analogously we say that $A=A_K(x,\hbar)dx^K\in\Omega_{\hbar}^k(U,\End E)$ belongs to $\mathcal{W}_P^s(U, \End E)/ \mathcal{W}_k^\infty(U, \End E)/ \mathcal{W}_k^{-\infty}(U, \End E)$ if for every $p,q=1,\cdots, r$ then $(A_K)_{ij}(x,\hbar)dx^K\in \mathcal{W}_P^s(U)/ \mathcal{W}_k^\infty(U)/ \mathcal{W}_k^{-\infty}(U)$.
\end{notation}

\begin{prop}\label{rmk:closed}
$\Pi$ is a solution of the Maurer-Cartan equation $d_W\Pi+\frac{1}{2}\lbrace\Pi,\Pi\rbrace_\sim=0$, up to higher order term in $\hbar$, i.e. there exists $\Pi_{E,R}\in\Omega^1(U,\End E\oplus T\mathcal{L})$ such that $\bar{\Pi}\defeq (\Pi_E+\Pi_{E,R}, \Pi^{CLM})$ is a solution of Maurer-Cartan and $\Pi_{E,R}\in\mathcal{W}_{P_m}^{-1}(U)$. 
\end{prop}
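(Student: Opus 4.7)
The plan is to split the Maurer--Cartan equation $d_W\bar\Pi+\tfrac12\{\bar\Pi,\bar\Pi\}_\sim=0$ into its $\End E$- and $T\mathcal{L}$-components, using the block-triangular form of $d_W$ and the explicit formula for $\{\cdot,\cdot\}_\sim$. Since $\bar\Pi$ has second component equal to $\Pi^{CLM}$ untouched, the $T\mathcal{L}$-component is the single-wall Maurer--Cartan equation of $\KS(\check X)$ handled by the Chan--Leung--Ma construction in \cite{MCscattering}, and so imposes no condition on $\Pi_{E,R}$. The analysis thus reduces to the $\End E$-component.

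I would then look for $\Pi_{E,R}=\sum_{j,k}B_{jk}(u_q,\hbar)\,t^j\,du_{m^\perp}\,\mathfrak{w}^{km}$ with $B_{jk}\in\End E$, proportional to $du_{m^\perp}$ just like $\Pi_E$ itself. The crucial observation is that under this ansatz the scalar wedge $du_{m^\perp}\wedge du_{m^\perp}=0$ kills every Fourier mode of $\{\Pi_E+\Pi_{E,R},\Pi_E+\Pi_{E,R}\}_\sim$ and of $\textbf{ad}(\Pi^{CLM},\Pi_E+\Pi_{E,R})$, by inspection of their local expressions. The $\End E$-component of Maurer--Cartan therefore collapses to the linear inhomogeneous equation
\[
d_{W,E}\,\Pi_{E,R}\;=\;-\,d_{W,E}\,\Pi_E\;-\;\hat B\,\Pi^{CLM}.
\]

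The next step is to compute the right-hand side explicitly via the formulas for $d_{W,E}$ and $\hat B$ applied to the single-term decomposition of $\Pi_E$ and $\Pi^{CLM}$. Derivatives of the Gaussian $f(u_{m^\perp})$ that would produce a second $du_{m^\perp}$ drop out, leaving only 2-forms proportional to $du_m\wedge du_{m^\perp}$; the $\check y$-integral in $\hat B$ singles out the Fourier mode $\mathbf{n}=km$; and the assumption that $h_E$ is constant along the fibres of $\check p$ reduces the $F_E$-contribution to $\hat B\Pi^{CLM}$ to an explicit matrix pairing with $n^j$. The residual is then an $\End E$-valued 2-form with Gaussian profile $f(u_{m^\perp})$, $\mathfrak{w}^{km}$-twist, and a leading $\hbar^{-1}$ coefficient coming from the $\tfrac{4\pi}{\hbar}\cdot 2\pi i\,(km)_l$ term of $d_{W,E}$.

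Finally, at each Fourier mode $\mathbf{n}=km$ the equation for $B_{jk}$ reduces to a first-order linear ODE in $u_m$ whose leading symbol is of order $\hbar^{-1}$; algebraic inversion of this symbol yields a solution one factor of $\hbar$ smaller than the data, which is precisely the gain needed to pass from order $1$ to order $-1$ in the asymptotic-support scale on $P_m$. The main obstacle I anticipate is the last step: one has to redo, for $B_{jk}$ and all its derivatives $\nabla^jB_{jk}$, the integration-by-parts Gaussian estimates used in the excerpt to show $\delta_m\in\mathcal{W}^1_{P_m}(U)$, uniformly in the formal $t$-expansion, in order to certify that $\Pi_{E,R}\in\mathcal{W}^{-1}_{P_m}(U,\End E)$ as claimed.
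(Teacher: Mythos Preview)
Your overall architecture --- split into $\End E$ and $T\mathcal{L}$ components, kill the bracket via $du_{m^\perp}\wedge du_{m^\perp}=0$, reduce to a linear equation for $\Pi_{E,R}$ --- is exactly the paper's. The gap is in your analysis of that linear equation.

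The claim that $d_{W,E}\Pi_E$ contributes a leading $\tfrac{4\pi}{\hbar}\cdot 2\pi i(km)_l$ term is incorrect. The explicit formula you cite is written in terms of bare Fourier coefficients with respect to $e^{2\pi i(\mathbf{n},\check y)}$, whereas $\mathfrak{w}^{km}$ is the transform of the \emph{holomorphic} function $\mathbf{w}^{km}=e^{2\pi i(km,z)}$; since $\bar\partial\,\mathbf{w}^{km}=0$, the Witten differential acts on $\alpha\,\mathfrak{w}^{km}$ simply as $\mathfrak{w}^{km}\,d_M\alpha$ (this is precisely the convention used in the homotopy identity for $H$). Hence $d_{W,E}\Pi_E=-A_{jk}t^j\mathfrak{w}^{km}\,d\delta_m=0$, and for the same reason there is no $\hbar^{-1}$ ``symbol'' in $d_{W,E}\Pi_{E,R}$ to invert. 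If your reading were correct, the proposition would actually be false: an $\hbar^{-1}$ piece in the data cannot be cancelled by $d_{W,E}$ of anything in $\mathcal{W}^{-1}_{P_m}$.

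The $\hbar$-gain has a different, simpler source that you have not used: the assumption that $h_E$ is constant along the fibres forces $F_E=\hbar^2 F_{jk}(\phi)\,dz_j\wedge d\bar z_k$, and after the Fourier scalings one finds
\[
\hat B\,\Pi^{CLM}=-4\pi\hbar\,a_{jk}t^j\mathfrak{w}^{km}n^l F_{lq}(\phi)\,\delta_m\wedge dx^q,
\]
which already carries the explicit factor of $\hbar$. Because the connection is diagonal one has $F_{lq}(\phi)\,dx^q=dA_l(\phi)$, so $\hat B\,\Pi^{CLM}$ is $d$-exact and one may set $\Pi_{E,R}:=4\pi\hbar\,a_{jk}t^j\mathfrak{w}^{km}n^lA_l(\phi)\,\delta_m$ directly. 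The membership $\Pi_{E,R}\in\mathcal{W}^{-1}_{P_m}$ is then immediate from $\delta_m\in\mathcal{W}^{1}_{P_m}$ and the factor $\hbar$; no ODE in $u_m$ and no further Gaussian integration-by-parts are needed.
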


\begin{proof}
First of all let us compute $d_W\Pi$:
\[
\begin{split}
d_W\Pi&=\big(d_{W,E}\Pi_E+\hat{B}\Pi^{CLM},d_{W,\mathcal{L}}\Pi^{CLM}\big)\\
&=\big(-A_{jk}t^j\mathfrak{w}^{km}d\delta_m+\hat{B}\Pi^{CLM}, -a_{jk}t^j\mathfrak{w}^{km}d(\delta_m)\otimes\partial_n\big)
\end{split}
\]
and notice that $d(\delta_m)=0$. Then, let us compute $\hat{B}\Pi^{CLM}$:
\[
\begin{split}
\hat{B}\Pi^{CLM}&=\FF(\FF^{-1}(\Pi^{CLM})\lrcorner F_E)\\
&=-\FF\left(\left(\frac{4\pi}{\hbar}\right)^{-1}a_{jk}t^j\textbf{w}^{km}\check{\delta}_m\otimes\check{\partial}_n\lrcorner \left(\hbar^2F_j^q(\phi)dz^j\wedge d\bar{z}_q\right)\right)\\
&=-\left(\frac{4\pi}{\hbar}\right)^{-1}\FF\big(a_{jk}t^j\textbf{w}^{km} n^l\hbar^2F_{jq}(\phi)\check{\delta}_m\wedge d\bar{z}^q\big)\\
&=-\hbar^{2}\Big(\frac{4\pi}{\hbar}\Big)^{-1}\Big(\frac{4\pi}{\hbar}\Big)^{2}a_{jk}t^j\mathfrak{w}^{km} n^lF_{lq}(\phi)\delta_m\wedge dx^q\\
&=-4\pi\hbar (a_{jk}t^j\mathfrak{w}^{km} n^lF_{lq}(\phi)\delta_m\wedge dx^q)
\end{split}
\]
where we denote by $\check{\delta}_m$ the Fourier transform of $\delta_m$. Notice that $\hat{B}\Pi^{CLM}$ is an exact two form, thus since $F_{lq}(\phi)dx^q=dA_l(\phi)$ (recall that the hermitian metric on $E$ is diagonal)we define 
\[\Pi_{E,R}\defeq 4\pi\hbar (a_{jk}t^j\mathfrak{w}^{km} n^lA_l(\phi)\delta_m)\] 
i.e. as a solution of $d_W\Pi_{E,R}=-\hat{B}\Pi^{CLM}$.

In particular, since $\delta_m\in\mathcal{W}_{P_m}^1(U)$ then $\hbar\delta_m\in\mathcal{W}_{P_m}^{-1}(U)$. Therefore $\Pi_{E,R}$ has the expected asymptotic behaviour and $d_W\bar{\Pi}=0$.  
Let us now compute the commutator:
\[
\begin{split}
\lbrace\bar{\Pi}, \bar{\Pi}\rbrace_\sim&=\big(2\FF\big(\FF^{-1}\Pi^{CLM}\lrcorner\nabla^E\FF^{-1}(\Pi_E+\Pi_{E,R})\big)+\lbrace\Pi_E+\Pi_{E,R},\Pi_E+\Pi_{E,R}\rbrace,\lbrace\Pi^{CLM},\Pi^{CLM}\rbrace\big)\\
&=\big(2\FF\big(\FF^{-1}\Pi^{CLM}\lrcorner\nabla^E\FF^{-1}(\Pi_E+\Pi_{E,R})\big)+2(\Pi_E+\Pi_{E,R})\wedge(\Pi_E+\Pi_{E,R}),0\big)
\end{split}
\] 
Notice that, since both $\Pi_E$ and $\Pi_{E,R}$ are matrix valued one forms where the form part is given by $\delta_m$, the wedge product $(\Pi_E+\Pi_{E,R})\wedge(\Pi_E+\Pi_{E,R})$ vanishes as we explicitly compute below
\[
\begin{split}
&(\Pi_E+\Pi_{E,R})\wedge(\Pi_E+\Pi_{E,R})=A_{jk}A_{rs}t^{j+r}\mathfrak{w}^{km+sm}\delta_m\wedge\delta_m+\\
&+8\pi\hbar a_{jk}t^{j+r}\mathfrak{w}^{km+sm} n^lA_l(\phi)A_{rs}\delta_m\wedge\delta_m+4\pi\hbar (a_{jk}t^j\mathfrak{w}^{km} n^lA_l(\phi))^2\delta_m\wedge\delta_m=0.
\end{split}
\]
 
Hence we are left to compute $\FF\left(\FF^{-1}\Pi^{CLM}\lrcorner\nabla^E\FF^{-1}\left((\Pi_E+\Pi_{E,R})\right)\right)$:
\[
\begin{split}
&\FF\big(\FF^{-1}\Pi^{CLM}\lrcorner\nabla^E\FF^{-1}(\Pi_E+\Pi_{E,R})\big)=\\
&=\FF\bigg(\Big(\frac{4\pi}{\hbar}\Big)^{-1}a_{jk}t^j\textbf{w}^{km}\check{\delta}_m\check{\partial}_n\lrcorner d \Big(\Big(\frac{4\pi}{\hbar}\Big)^{-1}\big(At\textbf{w}^m\check{\delta}_m+4\pi\hbar a_{rs}t^r\textbf{w}^{sm}n^lA_l(\phi)\check{\delta}_m\big)\Big)+\\
&\quad+\Big(\frac{4\pi}{\hbar}\Big)^{-1}a_{jk}t^j\textbf{w}^{km}\check{\delta}_m\check{\partial}_n\lrcorner \Big(i\hbar A_q(\phi)dz^q\wedge \Big(\Big(\frac{4\pi}{\hbar}\Big)^{-1}\big(At\textbf{w}^m\check{\delta}_m+4\pi\hbar a_{rs}t^r\textbf{w}^{sm}n^lA_l(\phi)\check{\delta}_m\big)\Big)\Big)\bigg)\\
&=\Big(\frac{4\pi}{\hbar}\Big)^{-1}\FF\bigg(a_{jk}t^j\textbf{w}^{km}\check{\delta}_m\check{\partial}_n\lrcorner\Big(At\partial_l(\textbf{w}^m)dz^l\wedge\check{\delta}_m+At\textbf{w}^md(\check{\delta}_m)+\\
&\quad+4\pi\hbar a_{rs}t^r\partial_l(n^qA_q(\phi)\textbf{w}^{sm})\check{\delta}_m+ 4\pi\hbar a_{rs}t^rn^qA_q(\phi)\textbf{w}^{sm}d(\check{\delta}_m)\Big)\bigg)\\
&=\Big(\frac{4\pi}{\hbar}\Big)^{-1}\FF\bigg(a_{jk}At^{j+1}\textbf{w}^{km}\check{\delta}_mn^l\partial_l(\textbf{w}^m)\wedge\check{\delta}_m+a_{jk}At^{j+1}\textbf{w}^{km+m}n^l\gamma_l(i\hbar^{-1}\gamma_pz^p)\check{\delta}_m\wedge\check{\delta}_m+\\
&\quad+4\pi\hbar a_{jk}a_{rs}t^{j+r}\textbf{w}^{km}\check{\delta}_m n^l\partial_l(n^qA_q(\phi)\textbf{w}^{sm})\check{\delta}_m+\\
&\quad+4\pi\hbar a_{jk}a_{rs}t^{j+r}\textbf{w}^{km+sm}n^qA_q(\phi)n^l\gamma_l(i\hbar^{-1}\gamma_pz^p)\check{\delta}_m\wedge\check{\delta}_m\bigg)\\
&=0
\end{split}
\]  
where $\check{\delta}_m=\frac{e^{-\frac{u_{m^\perp}^2}{\hbar}}}{\sqrt{\pi\hbar}}\gamma_pd\bar{z}^p$ for some constant $\gamma_p$ such that $u_{m^\perp}=\gamma_1x^1+\gamma_2 x^2$, and $\partial_l$ is the partial derivative with respect to the coordinate $z^l$. In the last step we use that $\check{\delta}_m\wedge\check{\delta}_m=0$.  
\end{proof}
\begin{remark}
In the following it will be useful to consider $\bar{\Pi}$ in order to compute the solution of Maurer-Cartan from the data of two non-parallel walls (see section \eqref{sec:two_walls}). However, in order to compute the asymptotic behaviour of the gauge it is enough to consider $\Pi$. 
\end{remark}
Since $\check{X}(U)\cong U\times \C/\Lambda$ has no non trivial deformations and $E$ is holomorphically trivial, then also the pair $(\check{X}(U),E)$ has no non trivial deformations. Therefore there is a gauge $\varphi\in\Omega^0(U, \End E\oplus TM)[\![ t ]\!]$ such that 
\begin{equation}
e^\varphi\ast 0=\bar{\Pi}
\end{equation}
namely $\varphi$ is a solution of the following equation
\begin{equation}\label{eq:gauge}
d_W\varphi=-\bar{\Pi}-\sum_{k\geq 0}\frac{1}{(k+1)!}\textsf{ad}_{\varphi}^kd_W\varphi.
\end{equation}
In particular the gauge $\varphi$ is not unique, unless we choose a gauge fixing condition (see Lemma \ref{lem:uniq}). In order to define the gauge fixing condition we introduce the so called homotopy operator.
  
\subsubsection{Gauge fixing condition and homotopy operator}  

Since $\mathcal{L}(U)=\bigsqcup_{m\in\Lambda}U_m$, it is enough to define the homotopy operator $H_m$ for every frequency $m$. Let us first define morphisms $p\defeq\bigoplus_{m\in \Lambda\setminus\lbrace 0\rbrace}p_m$ and $\iota\defeq\bigoplus_{m\in\Lambda\setminus\lbrace 0\rbrace}\iota_m$. We define $p_m:\mathfrak{w}^m\cdot\Omega^{\bullet}(U)\rightarrow\mathfrak{w}^m\cdot H^{\bullet}(U)$ which acts as $p_m(\alpha \mathfrak{w}^m)=\alpha({q_0})\mathfrak{w}^m$ if $\alpha\in\Omega^0(U)$ and it is zero otherwise. 

Then $\iota_m\colon\mathfrak{w}^m\cdot H^{\bullet}(U)\rightarrow\mathfrak{w}^m\Omega^{\bullet}(U)$ is the embedding of constant functions on $\Omega^{\bullet}(U)$ at degree zero, and it is zero otherwise. Then let $q_0\in H_{-}$ be a fixed base point, then since $U$ is contractible, there is a homotopy $\varrho\colon [0,1]\times U\to U$ which maps $(\tau,u_m,u_{m,\perp})$ to $(\varrho_1(\tau,u_m,u_{m,\perp}),\varrho_2(\tau,u_m,u_{m,\perp}))$ and such that $\varrho(0,\cdot)=q_0=(u_0^1,u_0^2)$ and $\varrho(1,\cdot)=\text{Id}$. We define $H_m$ as follows:
\begin{equation}\label{def:homotopy_1wall}
\begin{split}
H_m &\colon\mathfrak{w}^m\cdot \Omega^{\bullet}(U)\rightarrow \mathfrak{w}^m\cdot\Omega^{\bullet}(U)[-1]\\
&H_m(\mathfrak{w}^m\alpha)\defeq\mathfrak{w}^m\int_0^1 d\tau\wedge\frac{\partial}{\partial \tau}\lrcorner\varrho^*(\alpha)
\end{split}
\end{equation} 

\begin{lemma}
The morphism $H$ is a homotopy equivalence of $\text{id}_{\Omega^{\bullet}}$ and $\iota\circ p$, i.e. the identity
\begin{equation}\label{eq:PHi}
\text{id}-\iota\circ p=d_WH+Hd_W
\end{equation}
holds true.
\end{lemma}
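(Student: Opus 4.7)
My plan is to reduce the statement to the classical Poincar\'e-lemma chain homotopy on $\Omega^\bullet(U)$ and then invoke Cartan's magic formula. First, since $H$, $p$, $\iota$, and $d_W$ all respect the direct-sum decomposition indexed by $m\in\Lambda\setminus\{0\}$, it suffices to prove
\[
\text{id}-\iota_m\circ p_m=d_WH_m+H_md_W
\]
on each summand $\mathfrak{w}^m\cdot\Omega^{\bullet}(U)$. Fix such an $m$.

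Second, I will check that under the identification $\mathfrak{w}^m\cdot\Omega^{\bullet}(U)\cong\Omega^{\bullet}(U)$ implicit in writing forms on $\mathcal{L}(U)$ via their Fourier coefficients, the restriction of $d_W$ agrees with the ordinary de Rham differential $d$ of $U$. This is exactly the content of the definition $(d_{W,\mathcal{L}}\psi)_{\mathbf{n}}=e^{-2\pi\hbar^{-1}(\mathbf{n},x)}d(\psi\,e^{2\pi\hbar^{-1}(\mathbf{n},x)})$: the conjugation by $e^{2\pi\hbar^{-1}(m,x)}$ is designed precisely so that passage to/from the full form $\mathfrak{w}^m\alpha$ intertwines $d_W$ with $d$. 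Since the definition of $H_m$ treats $\mathfrak{w}^m$ as a coefficient and acts on $\alpha\in\Omega^{\bullet}(U)$ alone, and similarly $\iota_m\circ p_m$ acts through its restriction to the $\Omega^{\bullet}(U)$-factor, the per-frequency identity reduces to proving
\[
\alpha-\iota\circ p(\alpha)=d(H\alpha)+H(d\alpha)\qquad\text{for every }\alpha\in\Omega^{\bullet}(U),
\]
with $H\alpha=\int_0^1 d\tau\,\partial_\tau\lrcorner\varrho^*\alpha$, and with $p$ sending a $0$-form $\alpha$ to $\alpha(q_0)$ and higher-degree forms to $0$.

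Third, to prove this classical identity I apply Cartan's magic formula $\mathcal{L}_{\partial_\tau}=d\,\iota_{\partial_\tau}+\iota_{\partial_\tau}d$ to the pull-back $\varrho^*\alpha$ on $[0,1]\times U$ and integrate over $\tau\in[0,1]$. Writing $\varrho^*\alpha=\beta_0(\tau)+d\tau\wedge\beta_1(\tau)$ with $\beta_i\in\Omega^{\bullet}(U)$, the fundamental theorem of calculus gives
\[
\int_0^1\mathcal{L}_{\partial_\tau}\varrho^*\alpha\,d\tau=\beta_0(1)-\beta_0(0)=\varrho^*\alpha|_{\tau=1}-\varrho^*\alpha|_{\tau=0}.
\]
Since $\varrho(1,\cdot)=\text{id}$ and $\varrho(0,\cdot)\equiv q_0$ (a constant map, whose pullback kills all positive-degree forms and restricts $0$-forms to $\alpha(q_0)$), the left-hand side equals $\alpha-\iota\circ p(\alpha)$. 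On the other hand, using $d\varrho^*=\varrho^*d$ and the fact that the $U$-component of $d$ commutes with $\tau$-integration on terms not containing $d\tau$, the right-hand side of Cartan's formula integrates to $d(H\alpha)+H(d\alpha)$. Multiplying through by $\mathfrak{w}^m$ and invoking Step 2 yields the claimed identity.

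The step I expect to require the most care is the second one, namely verifying that the twist built into the definition of $d_{W,\mathcal{L}}$ (and the parallel twist in $d_{W,E}$, which here plays no role because $H_m$ is scalar-valued) is exactly what is needed to identify $d_W$ on $\mathfrak{w}^m\cdot\Omega^{\bullet}(U)$ with the ordinary de Rham differential in the Fourier-coefficient picture; once this bookkeeping is in place, the remaining argument is a direct application of Cartan's magic formula.
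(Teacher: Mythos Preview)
Your proposal is correct and takes a genuinely different route from the paper. The paper verifies the identity by a direct degree-by-degree computation: it treats $0$-forms, $1$-forms, and $2$-forms separately, expanding $H_m d_W$ and $d_W H_m$ explicitly in each case and checking that the sum matches $\text{id}-\iota_m\circ p_m$. Your argument instead reduces, uniformly in degree, to the classical Poincar\'e-lemma chain homotopy via Cartan's magic formula applied to $\varrho^*\alpha$ on $[0,1]\times U$ and fibre integration over $\tau$.

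What you gain is conceptual clarity and dimension-independence: your Step~3 works verbatim for any $n$-dimensional $U$, whereas the paper's case-by-case check is tailored to $\dim U=2$. What the paper's approach buys is that it sidesteps your Step~2 entirely: by working directly with $\mathfrak{w}^m\alpha$ and using the operational rule $d_W(\mathfrak{w}^m\alpha)=\mathfrak{w}^m\,d\alpha$ (which is exactly how $d_W$ is used throughout the proof and in the parallel lemma for $\mathbf{H}$), no separate justification of the twist is needed. Your caution about Step~2 is well placed but the claim is correct; the conjugation in the definition of $d_{W,\mathcal{L}}$ is precisely what makes $d_W$ act as the de~Rham differential on the coefficient $\alpha$ once $\mathfrak{w}^m$ is factored out.
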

\begin{proof}
At degree zero, let $f\in\Omega^0(U)$: then $\iota_m\circ p_m(f\mathfrak{w}^m)=f(q_0)\mathfrak{w}^m$. By degree reason $H_m(f\mathfrak{w}^m)=0$ and 
\begin{equation*}
H_md_W(\mathfrak{w}^mf)=\mathfrak{w}^m\int_0^1d\tau\wedge\frac{\partial}{\partial\tau}\lrcorner(d_M (f(\varrho))+d\tau\frac{\partial f(\varrho)}{\partial\tau})=\mathfrak{w}^m\int_0^1d\tau\frac{\partial f(\varrho)}{\partial\tau}=\mathfrak{w}^m(f(q)-f(q_0)).
\end{equation*} 
At degree $k=1$, let $\alpha=f_idx^i\in \Omega^1(U)$ then: $\iota_m\circ p_m(\alpha\mathfrak{w}^m)=0$,
\begin{equation*}
\begin{split}
H_md_W(\alpha\mathfrak{w}^m)&=\mathfrak{w}^m\int_0^1d\tau\wedge\frac{\partial}{\partial\tau}\lrcorner (d(\varrho^*(\alpha))\\
&=\mathfrak{w}^m\int_0^1d\tau\wedge\frac{\partial}{\partial\tau}\lrcorner (d_M(\varrho^*(\alpha))+d\tau\wedge\frac{\partial}{\partial\tau}(f_i(\varrho)\frac{\partial\varrho_i}{\partial x^i})dx^i)\\
&=-\mathfrak{w}^md_M\left(\int_0^1d\tau\wedge\frac{\partial}{\partial\tau}\lrcorner (\varrho^*(\alpha))\right)+\mathfrak{w}^m\int_0^1d\tau \frac{\partial}{\partial\tau}(f_i(\varrho)\frac{\partial\varrho_i}{\partial x^i})dx^i\\
&=-\mathfrak{w}^md_M\left(\int_0^1d\tau\wedge\frac{\partial}{\partial\tau}\lrcorner (\varrho^*(\alpha))\right)+\mathfrak{w}^m\alpha
\end{split}
\end{equation*}
and 
\begin{equation*}
d_WH_m(\mathfrak{w}^m\alpha)=\mathfrak{w}^md_M\left(\int_0^1d\tau\wedge\frac{\partial}{\partial\tau}\lrcorner\varrho^*(\alpha)\right).
\end{equation*}
Finally let $\alpha\in\Omega^2(U)$, then $p_m(\alpha\mathfrak{w}^m)=0$ and $d_W(\alpha\mathfrak{w}^m)=0$. Then it is easy to check that $d_WH_m(\mathfrak{w}^m\alpha)=\alpha$.
\end{proof}

\begin{lemma}[Lemma 4.7 in \cite{MCscattering}]\label{lem:uniq}
Among all solution of $e^{\varphi}\ast 0=\bar{\Pi}$, there exists a unique one such that $p(\varphi)=0$. 
\end{lemma}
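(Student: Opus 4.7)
The strategy is to convert the gauge equation into a $t$-adic fixed-point equation for $\varphi$ using the homotopy operator $H$, and then to solve the resulting equation by recursion on the order of $t$.

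Unwinding the formula for the gauge action with $f = 0$, the equation $e^{\varphi}\ast 0 = \bar{\Pi}$ is exactly \eqref{eq:gauge}, i.e.
\[
d_W\varphi \;=\; -\bar{\Pi} \;-\; \sum_{k\geq 1}\frac{1}{(k+1)!}\,\mathsf{ad}_{\varphi}^{k}\,d_W\varphi. \tag{$\diamond$}
\]
Because $\varphi\in\Omega^{0}(U,\End E\oplus TM)[\![ t ]\!]$ is a $0$-form and the homotopy $H$ defined in \eqref{def:homotopy_1wall} lowers form-degree by one, the integral formula immediately yields $H\varphi = 0$. The homotopy identity \eqref{eq:PHi} then reduces to $\varphi = \iota p(\varphi) + Hd_W\varphi$, and imposing the gauge-fixing condition $p(\varphi)=0$ gives $\varphi = Hd_W\varphi$. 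Applying $H$ to both sides of $(\diamond)$ produces the fixed-point equation
\[
\varphi \;=\; -H\bar{\Pi} \;-\; \sum_{k\geq 1}\frac{1}{(k+1)!}\,H\!\left(\mathsf{ad}_{\varphi}^{k}\,d_W\varphi\right). \tag{$\diamond\diamond$}
\]

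Next I would solve $(\diamond\diamond)$ recursively in the $t$-adic filtration. Since $\bar{\Pi}\in\mathsf{m}_t$, any solution must lie in $\mathsf{m}_t$, so write $\varphi = \sum_{N\geq 1}\varphi_N t^N$ and $\bar{\Pi}=\sum_{N\geq 1}\bar{\Pi}_N t^N$. The key observation is that $\mathsf{ad}_{\varphi}^{k}d_W\varphi$ has $t$-order at least $k+1$, because $\varphi$ and $d_W\varphi$ (the latter determined recursively by $\varphi$ through $(\diamond)$) both lie in $\mathsf{m}_t$. Hence modulo $t^{N+1}$ only finitely many summands of $(\diamond\diamond)$ contribute, each depending only on $\varphi_1,\dots,\varphi_{N-1}$. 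The outcome is a recursion
\[
\varphi_N \;=\; -H\bar{\Pi}_N \;+\; P_N(\varphi_1,\dots,\varphi_{N-1}),
\]
where $P_N$ is a universal expression built from $H$, $d_W$, and the bracket $\{\cdot,\cdot\}_\sim$. This uniquely determines the sequence $\{\varphi_N\}_{N\geq 1}$, giving both existence and uniqueness of a $\varphi\in\mathsf{m}_t\Omega^{0}(U,\End E\oplus TM)[\![ t ]\!]$ satisfying $(\diamond\diamond)$ and $p(\varphi)=0$.

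Finally, I would verify that this $\varphi$ solves the original equation $(\diamond)$ and not merely its $H$-projection. Setting
\[
R \;:=\; d_W\varphi + \bar{\Pi} + \sum_{k\geq 1}\frac{1}{(k+1)!}\,\mathsf{ad}_{\varphi}^{k}\,d_W\varphi \;\in\;\Omega^{1}(U,\End E\oplus T\mathcal{L}),
\]
the equation $(\diamond\diamond)$ together with $\varphi = Hd_W\varphi$ rewrites as $HR = 0$. Since $R$ has positive form-degree, $\iota p(R)=0$, and the homotopy identity then yields $R = Hd_WR$. A $t$-adic induction, using $d_W\bar{\Pi}=0$ from Proposition \ref{rmk:closed} and the graded Leibniz rule for $\{\cdot,\cdot\}_\sim$, shows that $d_W R$ vanishes order by order, hence $R = 0$. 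The main technical obstacle is precisely this last consistency check: the recursion for $(\diamond\diamond)$ is routine, but verifying that its solution also satisfies the unprojected equation $(\diamond)$ requires careful bookkeeping based on the Maurer-Cartan closedness of $\bar{\Pi}$; all other steps reduce to formal manipulations of the Hodge-type decomposition provided by $p$, $\iota$, and $H$.
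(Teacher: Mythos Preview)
Your approach is correct but takes a different route from the paper. The paper does not construct $\varphi$ directly; instead it takes existence of some solution as given (from the preceding remark that $(\check{X}(U),E)$ has no nontrivial deformations) and then \emph{normalises} it: for any $d_W$-closed $\sigma\in\Omega^0$ one has $e^{\varphi\bullet\sigma}\ast 0=e^\varphi\ast(e^\sigma\ast 0)=e^\varphi\ast 0=\bar\Pi$, and working order by order in $t$ through the BCH expansion $\varphi\bullet\sigma=\varphi+\sigma+\tfrac12\{\varphi,\sigma\}_\sim+\cdots$ one determines a unique closed $\sigma$ with $p(\varphi\bullet\sigma)=0$. This is shorter because it outsources existence to deformation theory and reduces uniqueness to the standard fact that two gauges for the same Maurer--Cartan element differ by a closed $0$-form.

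Your fixed-point argument via $(\diamond\diamond)$ is more self-contained: it produces $\varphi$ directly and gives existence and uniqueness simultaneously, at the price of the final consistency check. For that last step the cleanest phrasing is to note that $R=\bar\Pi-e^\varphi\ast 0$ is a difference of two Maurer--Cartan elements (the second automatically, the first by Proposition~\ref{rmk:closed}), so that $d_WR=-\tfrac12\{\bar\Pi+e^\varphi\ast 0,\,R\}_\sim$; combined with $R=Hd_WR$ this forces the $t$-order of $R$ to strictly increase, hence $R=0$. Your sketch invokes only $d_W\bar\Pi=0$, but it is really the full Maurer--Cartan property of $\bar\Pi$ (equivalently also $\{\bar\Pi,\bar\Pi\}_\sim=0$, which is proved in the same proposition) that makes the induction close.
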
 
\begin{proof}
First of all, let $\sigma\in\Omega^0(U)$ such that $d\sigma=0$. Then $e^{\varphi\bullet\sigma}\ast 0=\bar{\Pi}$, indeed $e^{\sigma}\ast 0=0-\sum_{k}\frac{[\sigma,\cdot]^k}{k!}(d\sigma)=0$. Thus $e^{\varphi\bullet\sigma}\ast 0=e^{\varphi}\ast(e^{\sigma}\ast 0)=e^{\varphi}\ast 0=\bar{\Pi}$. Thanks to the BCH formula \[\varphi\bullet\sigma=\varphi+\sigma+\frac{1}{2}\lbrace\varphi,\sigma\rbrace_\sim +\cdots\] we can uniquely determine $\sigma$ such that $p(\varphi\bullet\sigma)=0$. Indeed working order by order in the formal parameter $t$, we get:
\begin{enumerate}
\item $p(\sigma_1+\varphi_1)=0$, hence by definition of $p$, $\sigma_1(q_0)=-\varphi_1(q_0)$;
\item $p(\sigma_2+\varphi_2+\frac{1}{2}\lbrace\varphi_1,\sigma_1\rbrace_\sim)=0$, hence $\sigma_2(q_0)=-\big(\varphi_2(q_0)+\frac{1}{2}\lbrace\varphi_1,\sigma_1\rbrace_\sim (q_0)\big)$;
\end{enumerate} 
and any further order is determined by the previous one. 
\end{proof}

Now that we have defined the homotopy operator and the gauge fixing condition (as in Lemma \ref{lem:uniq}), we are going to study the asymptotic behaviour of the gauge $\varphi$ such that it is a solution of \eqref{eq:gauge} and $p(\varphi)=0$. Equations \eqref{eq:gauge}, \eqref{eq:PHi} and $p(\varphi)=0$ together say that the unique gauge $\varphi$ is indeed a solution of the following equation:
\begin{equation}\label{eq:sol_gauge}
\varphi=-Hd_W(\varphi)=-H\big(\bar{\Pi}+\sum_{k}{\textsf{ad}_{\varphi}^k\over (k+1)!}d_W\varphi\big).
\end{equation}

Up to now we have used a generic homotopy $\varrho$, but from now on we are going to choose it in order to get the expected asymptotic behaviour of the gauge $\varphi$. In particular we choose the homotopy $\varrho$ as follows: for every $q=(u_{q,m},u_{q,m^\perp})\in U$   
\begin{equation}\label{varro}
\varrho(\tau, u_q)=\begin{cases}
\left((1-2\tau)u_{1}^0+2\tau u_{q,m},u_{2}^0\right) & \text{if} \tau\in [0,\frac{1}{2}] \\
\left(u_{q,m},(2\tau-1)u_{q,m^\perp}+(2-2\tau)u_{2}^0\right) & \text{if} \tau\in [\frac{1}{2},1]
\end{cases}
\end{equation}

where $(u_0^1, u^2_0)$ are the coordinates for the fixed point $q_0$ on $U$. Then we have the following result:

\begin{lemma}\label{lem:H(W^s)}
Let $P_m$ be a ray in $U$ and let $\alpha\in\mathcal{W}_{P_m}^s(U)$. Then $H(\alpha\mathfrak{w}^m)$ belongs to $\mathcal{W}_0^{s-1}(U)$. 
\end{lemma}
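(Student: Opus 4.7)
The plan is to identify $H(\alpha \mathfrak{w}^m)$ explicitly using the chosen homotopy $\varrho$ from \eqref{varro} and then bound it via the integral estimates in the definition of $\mathcal{W}_{P_m}^s(U)$. First I would exploit the decomposition $\alpha = f(u_q,\hbar)\, du_{q,m^\perp} + \eta$ valid near $P_m$, with $\eta \in \mathcal{W}_1^{-\infty}$. The key observation is that on the first leg $\tau \in [0, 1/2]$ of $\varrho$ the second coordinate $\varrho_2 \equiv u_2^0$ is constant, so $\varrho^*(du_{q,m^\perp}) = 0$ and this leg contributes nothing. On the second leg $\tau \in [1/2, 1]$ the first coordinate $\varrho_1 \equiv u_{q,m}$ is constant, and after the substitution $s = (2\tau-1) u_{q,m^\perp} + (2-2\tau) u_2^0$ the homotopy integral reduces to
\[
H(f\, du_{q,m^\perp}\, \mathfrak{w}^m)(u_q) \;=\; \mathfrak{w}^m \int_{u_2^0}^{u_{q,m^\perp}} f(u_{q,m}, s, \hbar)\, ds.
\]
The remainder $H(\eta\, \mathfrak{w}^m)$ is controlled by $\sup|\eta|$ times the path length, hence lies in $\mathcal{W}_0^{-\infty}(U) \subset \mathcal{W}_0^{s-1}(U)$ and can be absorbed.

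Next I would establish the pointwise bounds $\sup_W |\nabla^j F| \leq C(W,j)\, \hbar^{-(s-1+j)/2}$ required by Definition \ref{def:I(P)}, writing $F(u_q, \hbar) := \int_{u_2^0}^{u_{q,m^\perp}} f(u_{q,m}, s, \hbar)\, ds$. For $j = 0$ the estimate \eqref{eq:stima W^s} with $\beta = 0$ gives directly $|F| \leq \int \sup_{u_{q,m}'} |f(u_{q,m}', s, \hbar)|\, ds \leq C\hbar^{-(s-1)/2}$. For $j \geq 1$ I decompose each derivative into $u_{q,m}$- and $u_{q,m^\perp}$-components: derivatives along $u_{q,m}$ pass inside the integral and are controlled directly by \eqref{eq:stima W^s} at the appropriate higher order; each derivative along $u_{q,m^\perp}$ drops one integration, reducing the quantity to a pointwise evaluation of $f$ or one of its derivatives at $s = u_{q,m^\perp}$.

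The main technical obstacle is exactly this pointwise step: \eqref{eq:stima W^s} provides only integral control in $u_{m^\perp}$, whereas $\mathcal{W}_0^{s-1}(U)$ demands a uniform $L^\infty$ bound. I would resolve it with the one-dimensional Morrey-type inequality
\[
\|g\|_{L^\infty(I)} \;\leq\; C\bigl(\|g\|_{L^1(I)} + \|g'\|_{L^1(I)}\bigr)
\]
on compact intervals $I \subset \R$, applied to $g(u_{m^\perp}) = \partial^k f(u_{q,m}, u_{m^\perp}, \hbar)$. Combining \eqref{eq:stima W^s} at two consecutive orders of differentiation (with $\beta = 0$) then yields $\sup |\partial^k f| \leq C\hbar^{-(s+k)/2}$, which is exactly what closes the induction on $j$ and produces the bound expected for $\mathcal{W}_0^{s-1}(U)$.

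Finally, for base points $q_* \in U \setminus P_m$ the argument is simpler: either a neighbourhood of $q_*$ admits integration paths avoiding $P_m$, and then $\alpha$ is exponentially small along those paths by the $\mathcal{W}_1^{-\infty}$ condition, giving an exponentially small $F$; or the paths cross $P_m$ transversally, in which case one splits $F$ into a bulk contribution near $P_m$ (estimated as in the preceding paragraph) and exponentially small tails coming from the portions of the path in the region where $\alpha \in \mathcal{W}_1^{-\infty}$. Either way $H(\alpha \mathfrak{w}^m)$ meets the $\mathcal{W}_0^{s-1}(U)$ bounds in a neighbourhood of $q_*$, which completes the verification.
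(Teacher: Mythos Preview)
Your proposal is correct and follows essentially the same route as the paper: explicit computation of $H$ along the two-leg homotopy $\varrho$, reduction to $F(u_q)=\int_{u_2^0}^{u_{q,m^\perp}} f(u_{q,m},s,\hbar)\,ds$, and then the derivative estimate. The one place where the presentations differ is the ``pointwise obstacle'' you flag. The paper handles it not with a Morrey inequality but with the equivalent FTC identity
\[
(\partial^{j-1}f)(u_{q,m},u_{q,m^\perp})=(\partial^{j-1}f)(u_{q,m},u_2^0)+\int_{u_2^0}^{u_{q,m^\perp}}\partial_s(\partial^{j-1}f)(u_{q,m},s)\,ds,
\]
so that the boundary term sits at $u_{m^\perp}=u_2^0$, which lies off $P_m$ (recall $q_0\in H_{m,-}$) and is therefore exponentially small; the remaining integral is controlled by \eqref{eq:stima W^s} at order $j$. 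Your Morrey bound $\|g\|_{L^\infty}\le C(\|g\|_{L^1}+\|g'\|_{L^1})$ is precisely this identity in disguise, and yields the same exponent $\hbar^{-(s-1+j)/2}$.

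Your treatment of $q_*\in U\setminus P_m$ is in fact more careful than the paper's. The paper bounds $H(\alpha\mathfrak{w}^m)$ near such $q_*$ using only that $\alpha\in\mathcal{W}_1^{-\infty}$ on a neighbourhood $V\ni q_*$, but the integration path from $q_0$ to $q\in V$ may well cross $P_m$, so the local exponential smallness of $\alpha$ near $q_*$ is not by itself sufficient. Your splitting into a bulk contribution near $P_m$ (estimated exactly as in the $q_*\in P_m$ case) plus exponentially small tails is the correct way to close this, and still lands in $\mathcal{W}_0^{s-1}$.
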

\begin{proof}
Let us first consider $q_*\in U\setminus P_m$. By assumption there is a neighbourhood of $q_*$, $V\subset U$ such that $\alpha\in\mathcal{W}_1^{-\infty}(V)$. Then by definition  
\begin{equation*}
H(\alpha\mathfrak{w}^m)=\mathfrak{w}^m\int_0^1d\tau\wedge\frac{\partial}{\partial \tau}\lrcorner\varrho^*(\alpha)=\int_0^1d\tau\alpha(\varrho)\left(\frac{\partial\varrho_1}{\partial\tau}+\frac{\partial\varrho_2}{\partial\tau} \right)
\end{equation*}
hence, since $\varrho$ does not depend on $\hbar$ 
\begin{equation*}
\sup_{q\in V}\nabla^j\left|\int_0^1d\tau\alpha(\varrho)\left(\frac{\partial\varrho_1}{\partial\tau}+\frac{\partial\varrho_2}{\partial\tau} \right)\right|\leq\int_0^1d\tau\sup_{q\in V}\left|\nabla^j(\alpha(\varrho)\left(\frac{\partial\varrho_1}{\partial\tau}+\frac{\partial\varrho_2}{\partial\tau} \right))\right|\leq C(V,j)e^{-\frac{c_v}{\hbar}}.
\end{equation*}

Let us now consider $q_*\in P_m$. By assumption there is a neighbourhood of $q$, $W\subset U$ such that for all $q=(u_{q,m},u_{q,m^\perp})\in W$  $\alpha=h(u_q, \hbar)du_{m_q^\perp}+\eta$ and $\eta\in\mathcal{W}_{1}^{-\infty}(W)$. By definition 

\begin{equation*}
\begin{split}
H(\alpha\mathfrak{w}^m)&=\mathfrak{w}^m\int_0^1d\tau\wedge\frac{\partial}{\partial \tau}\lrcorner\varrho^*(\alpha)\\
&=2\int_{\frac{1}{2}}^1d\tau h(u_{q,m},(2\tau-1)u_{q,m^\perp}+(2-2\tau)u_0^2)(u_{q,m^\perp}-u_0^2)+ \int_0^1d\tau\eta(\varrho)\frac{\partial\varrho_1}{\partial\tau}\\
&=\int_{u_0^2}^{u_{q,m^\perp}}du_m^\perp h(u_{q,m},u_{m^\perp})+ \int_0^1d\tau\eta(\varrho)\frac{\partial\varrho_1}{\partial\tau}
\end{split}
\end{equation*} 

and since $\eta\in\mathcal{W}_1^{-\infty}(W)$ the second term $\int_0^1d\tau\eta(\varrho)\frac{\partial\varrho_1}{\partial\tau}$ belongs to $\mathcal{W}_0^{\infty}$. The first term is computed below:

\begin{equation}
\begin{split}
\sup_{q\in W}\left|\nabla^j\left(\int_{u_0^2}^{u_{q,m^\perp}}du_m^\perp h(u_{q,m},u_{m^\perp})\right)\right|&=\sup_{q\in W}\Big|\int_{u_0^2}^{u_{q,m^\perp}}du_m^\perp \nabla^j(h(u_{q,m},u_{m^\perp}))+\\
&\quad+\left[\frac{\partial^{j-1}}{\partial u_{m^\perp}^{j-1}}(h(u_{q,m},u_{m^\perp)})\right]_{u_{m^\perp}=u_{q,m^\perp}}\Big|\\
&\leq \sup_{u_{q,m^\perp}}\int_{u_0^2}^{u_{q,m^\perp}}du_m^\perp \sup_{u_{q,m}}\left|\nabla^j(h(u_{q,m},u_{m^\perp}))\right|+\\
&\quad +\left[\sup_{q\in W}\left|\frac{\partial^{j-1}}{\partial u_{m^\perp}^{j-1}}(h(u_{q,m},u_{m^\perp)})\right|\right]_{u_{m^\perp}=u_{0}^2}\\
&\leq C(j,W)\hbar^{-\frac{s+j-1}{2}}
\end{split}
\end{equation}
where in the last step we use that $\left[\frac{\partial^{j-1}}{\partial u_{m^\perp}^{j-1}}(h(u_{q,m},u_{m^\perp)})\right]_{u_{m^\perp}=u_{0}^2} $ is outside the support of $P_m$.

\end{proof}

\begin{corollary}\label{lem:H(delta_m)}
Let $P_m$ be a ray in $U$, then $H(\delta_m\mathfrak{w}^m)\in\mathcal{W}_{0}^{0}(U)\mathfrak{w}^m$.
\end{corollary}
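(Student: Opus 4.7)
The corollary is an immediate specialization of the previous two facts. The plan is to combine the asymptotic support result for $\delta_m$ with the mapping property of the homotopy operator $H$.

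First I would recall that earlier in this section it was established that the normalized Gaussian one-form $\delta_m = \frac{e^{-u_{m^\perp}^2/\hbar}}{\sqrt{\pi\hbar}} du_{m^\perp}$ has asymptotic support of order $1$ along $P_m$, i.e. $\delta_m \in \mathcal{W}_{P_m}^1(U)$. Next I would invoke Lemma \ref{lem:H(W^s)}, which shows that if $\alpha \in \mathcal{W}_{P_m}^s(U)$ then $H(\alpha \mathfrak{w}^m) \in \mathcal{W}_0^{s-1}(U)\mathfrak{w}^m$ (the $\mathfrak{w}^m$ factor is preserved by $H_m$ by construction of the homotopy operator in \eqref{def:homotopy_1wall}).

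Applying this lemma with $\alpha = \delta_m$ and $s = 1$ immediately yields $H(\delta_m \mathfrak{w}^m) \in \mathcal{W}_0^{0}(U)\mathfrak{w}^m$, which is the desired conclusion.

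Since this is a direct corollary, there is no substantive obstacle; the only thing to check is bookkeeping, namely that the definition of $\mathcal{W}_0^s$ (Definition \ref{def:I(P)}) matches the $s-1$ shift coming from the integration along the transverse direction performed in the proof of Lemma \ref{lem:H(W^s)}. This is consistent: in that proof, the estimate obtained for the integral $\int_{u_0^2}^{u_{q,m^\perp}} h(u_{q,m}, u_{m^\perp}) du_{m^\perp}$ is exactly $C(j,W)\hbar^{-(s+j-1)/2}$, which for $s=1$ gives $C(j,W)\hbar^{-j/2}$, matching the bound required by Definition \ref{def:I(P)} with index $0$.
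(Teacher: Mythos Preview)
Your proof is correct and follows exactly the approach the paper intends: the corollary is stated without proof precisely because it is the immediate specialization of Lemma~\ref{lem:H(W^s)} to $\alpha=\delta_m$ with $s=1$, using the earlier result that $\delta_m\in\mathcal{W}_{P_m}^1(U)$.
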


\subsection{Asymptotic behaviour of the gauge $\varphi$}
We are going to compute the asymptotic behaviour of $\varphi=\sum_j\varphi^{(j)}t^j\in\Omega^0(U,\End E\oplus TM)[\![ t ]\!]$ order by order in the formal parameter $t$. In addition since $\Pi_{E,R}$ gives a higher $\hbar$-order contribution in the definition of $\bar{\Pi}$ we get rid of it by replacing $\bar{\Pi}$ with $\Pi$ in equation \eqref{eq:sol_gauge}. 

\begin{prop}\label{prop:asymp1}
Let $(m,P_m,\theta_m)$ be a wall with $\log\theta_m=\sum_{j,k\geq 1}\big(A_{jk}t^j\mathfrak{w}^{km},a_{jk}\mathfrak{w}^{km}t^j\partial_n\big)$. Then, the unique gauge $\varphi=(\varphi_E,\varphi^{CLM})$ such that $e^\varphi\ast 0=\Pi$ and $P(\varphi)=0$, has the following  asymptotic jumping behaviour along the wall, namely 
\begin{equation}\label{eq:lim gauge}
\varphi^{(s+1)}\in\begin{cases} \sum_{k\geq 1}\big(A_{s+1,k}t^{s+1}\mathfrak{w}^{km},a_{s+1,k}\mathfrak{w}^{km}t^{s+1}\partial_n\big)+\bigoplus_{k\geq 1}\mathcal{W}_{0}^{-1}(U,\End E\oplus TM)\mathfrak{w}^{km}t^{s+1} & H_{m,+}\\
\bigoplus_{k\geq 1}\mathcal{W}_0^{-\infty}(U, \End E\oplus TM)\mathfrak{w}^{km} t^{s+1}& H_{m,-}.
\end{cases}
\end{equation} 
\end{prop}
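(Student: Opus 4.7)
The plan is to induct on $s$, using the fixed-point formulation \eqref{eq:sol_gauge} of $\varphi$ together with the explicit homotopy $H$ built from \eqref{varro}. At order $s=0$ one only needs $\varphi^{(1)}=-H(\Pi^{(1)})$, and the integral defining $H(\delta_m\mathfrak{w}^{km})$ collapses via the chosen $\varrho$ to the rescaled Gaussian cumulative
\[
\mathfrak{w}^{km}\int_{u_0^2}^{u_{q,m^\perp}}\frac{e^{-u^2/\hbar}}{\sqrt{\pi\hbar}}\,du.
\]
Since $q_0\in H_{m,-}$ with $u_0^2<0$ bounded away from zero, an elementary $\hbar\to 0$ analysis (parallel to the estimates in the proof of Lemma \ref{lem:H(W^s)}) shows that on any compact $K\subset H_{m,+}$ this quantity equals $1$ modulo an $\mathcal{W}_0^{-1}$-error, while on any $K\subset H_{m,-}$ it lies in $\mathcal{W}_0^{-\infty}$. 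Multiplying by the coefficients $(A_{1,k},a_{1,k}\partial_n)$ establishes the base case.

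For the inductive step, assume the claim for all $j\le s$ and split
\[
\varphi^{(s+1)} = -H\bigl(\Pi^{(s+1)}\bigr) - \sum_{k\ge 1}\frac{1}{(k+1)!}H\bigl(\mathsf{ad}_\varphi^k d_W\varphi\bigr)^{(s+1)}.
\]
The first summand contributes the prescribed leading term $\sum_{k\ge 1}(A_{s+1,k},a_{s+1,k}\partial_n)\mathfrak{w}^{km}$ on $H_{m,+}$ modulo an $\mathcal{W}_0^{-1}$-error (resp.\ an $\mathcal{W}_0^{-\infty}$-error on $H_{m,-}$) by the same computation as above. For the bracket sum, the crucial observation is that in the interior of either half-plane, the ``constant'' leading pieces $A\mathfrak{w}^{km}$ of each $\varphi^{(j)}$ are $d_W$-closed, since under $\FF^{-1}$ they correspond to $\bar\partial$-closed Fourier modes of the trivial bundle. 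Hence $d_W\varphi^{(j)}$ is driven entirely by the $\mathcal{W}_0^{-1}/\mathcal{W}_0^{-\infty}$ corrections from the previous steps and sits in $\mathcal{W}_{P_m}^{s'}\mathfrak{w}^{k'm}$ for a suitable $s'$, concentrated in a Gaussian-width tube about the wall. The bracket formulas of Section \ref{subsec:relation} preserve this asymptotic class while adding Fourier frequencies, and applying $H$ and invoking Lemma \ref{lem:H(W^s)} then lowers the $\mathcal{W}_{P_m}$-index by one, producing the required correction in $\bigoplus_k\mathcal{W}_0^{-1}\mathfrak{w}^{km}$ on $H_{m,+}$ and $\bigoplus_k\mathcal{W}_0^{-\infty}\mathfrak{w}^{km}$ on $H_{m,-}$.

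The main obstacle I expect is the precise bookkeeping of $\mathcal{W}^s$-indices along the nonlinear Kuranishi-type iteration. Each application of $d_W$ generates a Fourier-frequency factor that scales like $\hbar^{-1}$, which must be exactly absorbed by the Gaussian decay of the $\mathcal{W}_{P_m}^s$ densities supported along the wall. Moreover, the $\End E$ block interacts with the $T\mathcal{L}$ block through the curvature contribution $\hat B$, which as observed in the proof of Proposition \ref{rmk:closed} picks up an extra factor of $\hbar$; one must verify that this interaction, together with the sum over arbitrarily many Fourier frequencies $km$ produced by iterated brackets, does not degrade the asymptotic class of the matrix-valued correction at any stage.
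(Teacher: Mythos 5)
Your proposal follows the same general strategy as the paper — inducting on $s$ through the fixed-point formulation \eqref{eq:sol_gauge} and controlling the nonlinear terms before applying $H$ — but there is a genuine gap in the inductive step.

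The heart of the paper's proof is the precise claim \eqref{claim}, namely that $\bigl(\sum_{k\geq 1}\tfrac{\ad_{\varphi^s}^k}{(k+1)!}d_W\varphi^s\bigr)^{(s+1)}\in\mathcal{W}_{P_m}^{0}(U,\End E\oplus TM)$, proved by (a) writing the gauge equation \eqref{eq:gauge} at order $s$ to identify $d_W\varphi^s\in -\Pi^s+\mathcal{W}_{P_m}^0(U)$ — so the leading piece of $d_W\varphi^s$ is the Gaussian one-form $\Pi^s$ itself, of type $\mathcal{W}_{P_m}^1$ — and (b) applying Lemma \ref{lem:bracket} to control $\{H(\Pi^s),\Pi^s\}_\sim$ and all the mixed terms, using crucially that a $\mathcal{W}_{P_m}^r$-supported factor times a $\mathcal{W}_0^s$ factor lands in $\mathcal{W}_{P_m}^{r+s}$ and that the $\textbf{ad}$ pieces carry an extra $\hbar$ that compensates the derivative. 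Your argument replaces step (a) by the observation that the ``plateau'' constants $A\mathfrak{w}^{km}$ are $d_W$-closed, so $d_W\varphi^{(j)}$ ``is driven entirely by the $\mathcal{W}_0^{-1}/\mathcal{W}_0^{-\infty}$ corrections''. This is misleading: while the constants are indeed closed, the step from $A\mathfrak{w}^{km}$ on $H_{m,+}$ to $0$ on $H_{m,-}$ produces, under $d_W$, precisely a $\mathcal{W}_{P_m}^1$ Gaussian term ($\Pi^s$ up to sign), not something of the pointwise $\mathcal{W}_0^{-1}$ or $\mathcal{W}_0^{-\infty}$ size. That term is the dominant input into the nonlinear bracket, and without explicitly pinning down $d_W\varphi^s\in-\Pi^s+\mathcal{W}_{P_m}^0$ and then invoking Lemma \ref{lem:bracket} (not just Lemma \ref{lem:H(W^s)}), the phrase ``sits in $\mathcal{W}_{P_m}^{s'}$ for a suitable $s'$'' does not close the induction: if $s'$ were $1$ rather than $0$, applying $H$ would only give $\mathcal{W}_0^0$, not the required $\mathcal{W}_0^{-1}$ error. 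The index bookkeeping that you flag as ``the main obstacle'' is in fact the entire substance of the paper's inductive step, and your sketch does not carry it out.

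One smaller point: your concern about the $\hat B$/curvature interaction is resolved more directly than you suggest. In the proof of Proposition \ref{rmk:closed} the paper isolates the curvature contribution in the correction term $\Pi_{E,R}\in\mathcal{W}_{P_m}^{-1}(U)$, and at the start of the asymptotic analysis it replaces $\bar{\Pi}$ by $\Pi$ precisely because $\Pi_{E,R}$ is of lower $\hbar$-order; the remaining curvature terms inside the bracket appear in the $\textbf{ad}$ pieces and are handled uniformly by items $(2)$ and $(3)$ of Lemma \ref{lem:bracket}, each carrying a compensating factor of $\hbar$, so no separate argument for the $\End E$ block is needed.
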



Before giving the proof of Proposition \ref{prop:asymp1}, let us introduce the following Lemma which are useful to compute the asymptotic behaviour of one-forms asymptotically supported on a ray $P_m$.
\begin{lemma}\label{lem:wedge_sobolev}
Let $P_m$ be a ray in $U$. Then $\mathcal{W}_{P_m}^s(U)\wedge\mathcal{W}_{0}^{r}(U)\subset\mathcal{W}^{r+s}_{P_{m}}(U)$.
\end{lemma}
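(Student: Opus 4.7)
The plan is to verify the two conditions of Definition 4.19 for the product $f\alpha$ (with $f\in\mathcal{W}_0^r(U)$ and $\alpha\in\mathcal{W}_{P_m}^s(U)$) with the exponent $s$ replaced by $r+s$, using just the Leibniz rule and the known bounds. The work splits according to whether the base point lies off or on the ray $P_m$.

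First, for a point $q_\ast\in U\setminus P_m$, pick a neighborhood $V\subset U\setminus P_m$ on which $\alpha|_V\in\mathcal{W}_1^{-\infty}(V)$. By Leibniz
\begin{equation*}
\nabla^j(f\alpha)=\sum_{k+l=j}\binom{j}{k}\nabla^k f\cdot \nabla^l\alpha,
\end{equation*}
and since the factors $|\nabla^k f|$ are bounded by a constant times $\hbar^{-(r+k)/2}$ while the $|\nabla^l\alpha|$ decay like $e^{-c_V/\hbar}$, the polynomial factor is absorbed and $f\alpha|_V\in\mathcal{W}_1^{-\infty}(V)$.

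For a point $q_\ast\in P_m$, choose $W\ni q_\ast$ on which $\alpha$ admits the decomposition $\alpha=h(u_q,\hbar)\,du_{q,m^\perp}+\eta$ with $\eta\in\mathcal{W}_1^{-\infty}(W)$. Then
\begin{equation*}
f\alpha=(fh)\,du_{q,m^\perp}+f\eta,
\end{equation*}
and the residual term $f\eta$ lies in $\mathcal{W}_1^{-\infty}(W)$ by the argument of the previous paragraph. For the principal term, applying Leibniz to $\nabla^j(fh)$ and pulling $\sup_W|\nabla^k f|$ out of the integral gives
\begin{equation*}
\int (u_{m^\perp})^\beta\!\!\!\sup_{(u_{q,m},u_{m^\perp})\in W}\!\!\!|\nabla^j(fh)|\,du_{m^\perp}\leq \sum_{k+l=j}\binom{j}{k}\sup_W|\nabla^k f|\int (u_{m^\perp})^\beta\!\!\!\sup_{(u_{q,m},u_{m^\perp})\in W}\!\!\!|\nabla^l h|\,du_{m^\perp}.
\end{equation*}
The first factor is controlled by $C\,\hbar^{-(r+k)/2}$ from the definition of $\mathcal{W}_0^r(U)$, and the second by $C\,\hbar^{-(l+s-\beta-1)/2}$ from the estimate \eqref{eq:stima W^s} applied to $h$ at derivative order $l$. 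Multiplying these bounds gives the uniform estimate $C\,\hbar^{-(j+(r+s)-\beta-1)/2}$, which is exactly the condition for $f\alpha\in\mathcal{W}_{P_m}^{r+s}(U)$.

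There is no substantive obstacle: the statement is a clean consequence of the Leibniz rule together with the two estimates packaged into the respective definitions. The only mild care needed is to note that exponential decay of $\alpha$ absorbs the polynomial-in-$\hbar^{-1}$ growth of $f$, so that both the $\eta$-part and the off-ray part remain in $\mathcal{W}_1^{-\infty}$ after multiplication by $f$.
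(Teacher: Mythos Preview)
Your proof is correct and follows essentially the same approach as the paper's: both split via the Leibniz rule, pull the $\sup|\nabla^k f|$ factor out of the integral using the $\mathcal{W}_0^r$ bound, and apply the estimate \eqref{eq:stima W^s} to the remaining $h$-factor to obtain $\hbar^{-(j+(r+s)-\beta-1)/2}$. If anything, your write-up is slightly more complete in that you treat the off-ray case $q_\ast\in U\setminus P_m$ explicitly, whereas the paper only writes out the on-ray estimate and remarks that $f\eta\in\mathcal{W}_1^{-\infty}$.
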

\begin{proof}
Let $\alpha\in\mathcal{W}_{P_m}^s(U)$ and let $f\in\mathcal{W}_{0}^{r}(U)$. Pick a point $q_*\in P_m$ and let $W\subset U$ be a neighbourhood of $q_*$ where $\alpha=h(u_q,\hbar)du_{m^\perp}+\eta$, we claim 
\begin{equation}
\int_{-a}^b u_{m^\perp}^\beta\sup_{u_m}\left|\nabla^j(h(u_q,\hbar)f(u_q\hbar))\right|du_{m^\perp}\leq C(a,b,j,\beta)\hbar^{-\frac{r+s+j-\beta-1}{2}}
\end{equation}
for every $\beta\in\Z_{\geq 0}$ and for every $j\geq0$.

\begin{equation*}
\begin{split}
&\int_{-a}^b u_{m^\perp}^{\beta} \sup_{u_m}\left|\nabla^j\left(h(u_q,\hbar)f(u_q\hbar)\right)\right|du_{m^\perp}=\\
&=\sum_{j_1+j_2=j}\int_{-a}^b u_{m^\perp}^{\beta}  \sup_{u_m}\left|\nabla^{j_1}(h(u_q,\hbar))\nabla^{j_2}(f(u_q\hbar))\right|du_{m^\perp}\\
&\leq \sum_{j_1+j_2=j}C(a,b,j_2)\hbar^{-\frac{r+j_2}{2}}\int_{-a}^b u_{m^\perp}^{\beta} \sup_{u_m}\left|\nabla^{j_1}(h(u_q,\hbar))\right|du_{m^\perp}\\
&\leq \sum_{j_1+j_2=j}C(a,b,j_2,j_1)\hbar^{-\frac{r+j_2}{2}}\hbar^{-\frac{s+j_1-\beta-1}{2}}\\
&\leq C(a,b,j)\hbar^{-\frac{r+s+j-\beta-1}{2}}
\end{split}
\end{equation*}
Finally, since $\eta\in\mathcal{W}_1^{-\infty}(W)$ also $f(x,\hbar)\eta$ belongs to $\mathcal{W}_1^{-\infty}(W)$.  
\end{proof}

\begin{lemma}\label{lem:bracket} 
Let $P_{m}$ be a ray in $U$. If $(A\mathfrak{w}^{m}, \varphi\mathfrak{w}^{m}\partial_{n})\in\mathcal{W}_{P_{m}}^{r}(U,\End E\oplus TM)\mathfrak{w}^{m}$ for some $r\geq 0$ and $(T\mathfrak{w}^{m}, \psi\mathfrak{w}^{m}\partial_{n})\in\mathcal{W}_{0}^{s}(U,\End E\oplus TM)\mathfrak{w}^{m}$ for some $s\geq$, then 
\begin{equation}
\lbrace (A\mathfrak{w}^{m}, \alpha\mathfrak{w}^{m}\partial_{n}),(T\mathfrak{w}^{m}, f\mathfrak{w}^{m}\partial_{n})\rbrace_\sim\in \mathcal{W}_{P_{m}}^{r+s}(U,\End E\oplus TM)\mathfrak{w}^{2m}.
\end{equation}
\end{lemma}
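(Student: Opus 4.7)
My plan is to decompose the bracket into its constituent pieces according to the formula for $\lbrace\cdot,\cdot\rbrace_\sim$ given just before Definition \ref{def:mirror dgLa}, and then estimate each piece separately. Writing $p=1$, $q=0$ for the respective form degrees, the bracket reads
\begin{equation*}
\lbrace (A\mathfrak{w}^m,\alpha\mathfrak{w}^m\partial_n),(T\mathfrak{w}^m,f\mathfrak{w}^m\partial_n)\rbrace_\sim=\bigl(\lbrace A\mathfrak{w}^m,T\mathfrak{w}^m\rbrace+\textbf{ad}(\alpha\mathfrak{w}^m\partial_n,T\mathfrak{w}^m)-\textbf{ad}(f\mathfrak{w}^m\partial_n,A\mathfrak{w}^m),\lbrace\alpha\mathfrak{w}^m\partial_n,f\mathfrak{w}^m\partial_n\rbrace\bigr),
\end{equation*}
so I would treat the four terms one at a time and show each lives in $\mathcal{W}_{P_m}^{r+s}(U,\End E\oplus TM)\mathfrak{w}^{2m}$.

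The first term is the easiest: $\lbrace A\mathfrak{w}^m,T\mathfrak{w}^m\rbrace=[A,T]_{\mathfrak{gl}}\mathfrak{w}^{2m}$ is just a wedge of the matrix-valued $1$-form $A$ (whose coefficients lie in $\mathcal{W}_{P_m}^r(U)$) with the matrix-valued $0$-form $T$ (whose coefficients lie in $\mathcal{W}_0^s(U)$), and hence belongs to $\mathcal{W}_{P_m}^{r+s}(U,\End E)\mathfrak{w}^{2m}$ directly by Lemma \ref{lem:wedge_sobolev} applied entrywise.

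For the two $\textbf{ad}$ pieces I would unfold the explicit formula already computed in \S\ref{subsec:relation} for elements of $\mathcal{G}(U)$, which gives a splitting into (i) a zeroth-order part proportional to the pairing $\langle m,n\rangle$ and (ii) correction terms coming from $\nabla_{\partial_n}$ and from the Chern-connection piece $\hbar A_j(\phi)$. The zeroth-order contributions are of the form $\alpha\cdot T$ or $f\cdot A$ (up to scalar pairings) and land in $\mathcal{W}_{P_m}^{r+s}$ directly via Lemma \ref{lem:wedge_sobolev}. For the derivative pieces, the key observation is that the identification $\partial_n=\frac{\hbar}{4\pi}n^j\frac{\partial}{\partial x_j}$ produces an overall factor of $\hbar$: since $\partial_j f\in\mathcal{W}_0^{s+1}(U)$ when $f\in\mathcal{W}_0^{s}(U)$, the combination $\hbar\,\partial_j f$ lies in $\mathcal{W}_0^{s-1}(U)$, and analogously $\hbar\,\partial_j A\in\mathcal{W}_{P_m}^{r-1}(U)$. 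Applying Lemma \ref{lem:wedge_sobolev} to these improved factors places each such correction in $\mathcal{W}_{P_m}^{r+s-1}\subset\mathcal{W}_{P_m}^{r+s}$. The Chern connection correction $\hbar A_j(\phi)$ is even better, dropping the order by two. Finally, $\lbrace\alpha\mathfrak{w}^m\partial_n,f\mathfrak{w}^m\partial_n\rbrace$ is handled in the same spirit: the would-be leading symbol $[\mathfrak{w}^m\partial_n,\mathfrak{w}^m\partial_n]$ vanishes by antisymmetry, so only the derivative-type terms survive, and they are again controlled by the $\hbar$ from $\partial_n$.

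The main obstacle, and where I would spend the most care, is the uniform bookkeeping for the $\textbf{ad}$ terms: Definitions \ref{def:I(P)} and $4.19$ require estimates on every higher covariant derivative $\nabla^j$ and every moment $u_{m^\perp}^\beta$, not just leading order. One has to verify that the product rule combined with the induction scheme used in the proof of $\delta_m\in\mathcal{W}_{P_m}^1(U)$ yields the right polynomial bounds after multiplying by the $\mathcal{W}_0^s$-type factor and its derivatives. Once those bookkeeping estimates are formalised (essentially repeating the induction of Lemma \ref{lem:wedge_sobolev} with one extra Leibniz step and the $\hbar$-shift above), all four terms fit into $\mathcal{W}_{P_m}^{r+s}(U,\End E\oplus TM)\mathfrak{w}^{2m}$ and the lemma follows.
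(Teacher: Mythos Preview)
Your proposal is correct and follows essentially the same route as the paper: decompose the bracket into the four pieces $\lbrace A,T\rbrace_{\End E}$, the two $\textbf{ad}$ terms, and $\lbrace\varphi\partial_n,\psi\partial_n\rbrace$, then use Lemma~\ref{lem:wedge_sobolev} together with the $\hbar$-weighting of $\partial_n$ to place each piece in $\mathcal{W}_{P_m}^{r+s}$ or better. The only stylistic difference is that the paper unwinds the two $\textbf{ad}$ terms by an explicit Fourier-transform computation (obtaining $i\hbar\,\varphi_k\bigl(n^l\partial_lT+n^lA_l(\phi)T\bigr)\mathfrak{w}^{2m}dx^k$ and its analogue), whereas you describe the same splitting structurally; and for the $TM$ piece the paper simply declares $\lbrace\varphi\mathfrak{w}^m\partial_n,\psi\mathfrak{w}^m\partial_n\rbrace=0$ via $[\mathfrak{w}^m\partial_n,\mathfrak{w}^m\partial_n]=0$, while you (more cautiously) allow for surviving derivative terms and control them with the extra $\hbar$---either way the conclusion is the same.
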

\begin{proof}
We are going to prove the following:
\begin{equation*}
\begin{split}
(1)&\lbrace A\mathfrak{w}^{m},T\mathfrak{w}^{m}\rbrace_{\End E}\subset \mathcal{W}_{P_{m}}^{r+s}(U,\End E)\mathfrak{w}^{2m} \\
(2)&\textbf{ad}\left(\varphi\mathfrak{w}^{m}\partial_{n}, T\mathfrak{w}^{m}\right)\subset \mathcal{W}_{P_{m}}^{r+s-1}(U,\End E)\mathfrak{w}^{2m}\\
(3)&\textbf{ad}\left(\psi\mathfrak{w}^{m}\partial_{n}, A\mathfrak{w}^{m}\right)\subset \mathcal{W}_{P_{m}}^{r+s-1}(U,\End E)\mathfrak{w}^{2m}\\
(4)&\lbrace\varphi\mathfrak{w}^{m}\partial_{n},\psi\mathfrak{w}^{m}\partial_{n}\rbrace\subset \mathcal{W}_{P_{m}}^{r+s}(U, TM)\mathfrak{w}^{m}.
\end{split}
\end{equation*}  
The first one is a consequence of Lemma \ref{lem:wedge_sobolev}, indeed by definition \[\lbrace A_k(x)dx^k, T(x)\rbrace_{\End E}=[A_k(x), T(x)]dx^k\]
which is an element in $\End E$ with coefficients in $\mathcal{W}_{P_{m}}^{r+s}(U)$.
 
The second one is less straightforward and need some explicit computations to be done. 
\[
\begin{split}
&\textbf{ad}(\varphi_k(x)\mathfrak{w}^mdx^k\partial_n,T(x)\mathfrak{w}^m)=\FF\big(\FF^{-1}(\varphi_k(x)\textbf{w}^m dx^k\otimes\partial_n)\lrcorner\nabla^E\FF^{-1}(T(x)\mathfrak{w}^m)\big)\\
&=\FF\bigg(\Big(\frac{4\pi}{\hbar}\Big)^{-1}\varphi_k(x)\textbf{w}^m d\bar{z}^k\otimes\check{\partial}_n\lrcorner\nabla^E(T(x)\textbf{w}^m)\bigg)\\
&=\Big(\frac{4\pi}{\hbar}\Big)^{-1}\FF\bigg(\varphi_k(x)\textbf{w}^m d\bar{z}^k\check{\partial}_n\lrcorner\Big(\partial_j(T(x)\textbf{w}^m)dz^j \Big)\bigg)\\
&\quad+i\hbar\Big(\frac{4\pi}{\hbar}\Big)^{-1}\FF\bigg(\varphi_k(x)\textbf{w}^m d\bar{z}^k\check{\partial}_n\lrcorner\Big(A_j(\phi)T(x) \textbf{w}^mdz^j\Big)\bigg)\\
&=\Big(\frac{4\pi}{\hbar}\Big)^{-1}\FF\bigg(\varphi_k(x)\textbf{w}^md\bar{z}^kn^l\frac{\partial}{\partial z_l}\lrcorner\Big(\partial_j(T(x))\textbf{w}^{m}dz^j +m_jA(x)\textbf{w}^{m}dz^j\Big)\bigg)\\
&\quad+i\hbar\Big(\frac{4\pi}{\hbar}\Big)^{-1}\FF\bigg(\varphi_k(x)\textbf{w}^m d\bar{z}^kn^l\frac{\partial}{\partial z_l}\lrcorner\Big(A_j(\phi)T(x)\textbf{w}^mdz^j\Big)\bigg)\\
&=\Big(\frac{4\pi}{\hbar}\Big)^{-1}\FF\bigg(\varphi_k(x)d\bar{z}^k
n^lm_lT(x)\textbf{w}^{2m}\bigg)+\\
&\quad+i\hbar\Big(\frac{4\pi}{\hbar}\Big)^{-1}\FF\bigg(\varphi_k(x)d\bar{z}^k\big(n^lT(x)A_l(\phi)+n^l\frac{\partial T(x)}{\partial x^l} \big)\textbf{w}^{2m}\Big)\bigg)\\
&=i\hbar \varphi_k(x)
\big(n^lT(x)A_l(\phi)+n^l\frac{\partial T(x)}{\partial x^l}\big) \mathfrak{w}^{2m} dx^k
\end{split}
\]
Notice that as a consequence of Lemma \ref{lem:wedge_sobolev}, $\hbar\varphi_k(x)dx^k A_l(\phi)T(x)\in\mathcal{W}_{P_{m}}^{s+r-2}(U)$ while $\hbar\varphi_k(x)\frac{\partial T(x)}{\partial x^l}dx^k\in\mathcal{W}_{P_m}^{r+s-1}$.

The third one is
\begin{align*}
&\textbf{ad}(\psi(x)\mathfrak{w}^m\partial_n,A_k(x)\mathfrak{w}^mdx^k)=\\
&=\FF\big(\FF^{-1}(\psi(x)\partial_n)\lrcorner\nabla^E\FF^{-1}(A_k(x)\mathfrak{w}^mdx^k)\big)\\
&=\FF\bigg(\psi(x)\textbf{w}^m\check{\partial}_n\lrcorner\nabla^E(\Big(\frac{4\pi}{\hbar}\Big)^{-1}A_k(x)\textbf{w}^md\bar{z}^k)\bigg)\\
&=\Big(\frac{4\pi}{\hbar}\Big)^{-1}\FF\bigg(\psi(x)\textbf{w}^m\check{\partial}_n\lrcorner\Big(\partial_j(A_k(x)\textbf{w}^m)\Big)dz^j\wedge d\bar{z}^k\bigg)\\
&\quad+i\hbar\Big(\frac{4\pi}{\hbar}\Big)^{-1}\FF\bigg(\psi(x)\textbf{w}^m\check{\partial}_n\lrcorner\Big(A_j(\phi)A_k(x) \textbf{w}^m\Big)\wedge d\bar{z}^k\bigg)\\
&=\Big(\frac{4\pi}{\hbar}\Big)^{-1}\FF\bigg(\psi(x)\textbf{w}^mn^l\frac{\partial}{\partial z_l}\lrcorner\Big(\partial_j(A_k(x))\textbf{w}^mdz^j+m_jA_k(x)\textbf{w}^m dz^j\Big)\wedge d\bar{z}^k\bigg)\\
&\quad+i\hbar\Big(\frac{4\pi}{\hbar}\Big)^{-1}\FF\bigg(\psi(x)\textbf{w}^m\check{\partial}_n\lrcorner\Big(A_j(\phi)A_k(x) \textbf{w}^mdz^j\Big)\wedge d\bar{z}^k\bigg)\\
&=\Big(\frac{4\pi}{\hbar}\Big)^{-1}\FF\bigg(\psi(x)\textbf{w}^{2m}n^lm_lA(x) d\bar{z}^k\bigg)+\\
&\quad+i\hbar\Big(\frac{4\pi}{\hbar}\Big)^{-1}\FF\bigg(\psi(x)\textbf{w}^{2m}n^l
A_k(x)A_l(\phi)+\psi(x)n^l\frac{\partial A_k(x)}{\partial x^l}\textbf{w}^{2m}d\bar{z}^k\bigg)\\
&=i\hbar \psi(x)\Big(n^lA_k(x)A_l(\phi)+n^l\frac{\partial A_k(x)}{\partial x^l}\Big)\mathfrak{w}^{2m}dx^k
\end{align*}

Notice that $\hbar\psi(x)A_k(x)A(\phi)dx^k\in\mathcal{W}_{P_m}^{r+s-2}(W)$ and $\hbar\psi(x)\frac{\partial A_k(x)}{\partial x^l}dx^k\in\mathcal{W}_{P_m}^{r+s-1}(W)$.

In the end $\lbrace\varphi\mathfrak{w}^{m}\partial_{n},\psi\mathfrak{w}^{m}\partial_{n}\rbrace$ is equal to zero, indeed by definition 
\[
\lbrace\varphi_{k}(x)dx^k{\partial_n}, \psi(x)\partial_n \rbrace=\left(\varphi_{k}dx^k\wedge\psi\right)[\mathfrak{w}^m\partial_n, \mathfrak{w}^m\partial_n]
\]
and $[\mathfrak{w}^m\partial_n,\mathfrak{w}^m\partial_n]=0$.
\end{proof}

\begin{proof}{(Proposition \ref{prop:asymp1})}

It is enough to show that for every $s\geq 0$
\begin{equation}
\label{claim}
\Big(\sum_{k\geq 1}{\ad_{\varphi^s}^k\over (k+1)!} d_W\varphi^s\Big)^{(s+1)}\in\mathcal{W}_{P_m}^{0}(U,\End E\oplus TM),
\end{equation}
where $\varphi^s=\sum_{j=1}^s\varphi^{(j)}t^j$.
Indeed from equation \eqref{eq:sol_gauge}, at the order $s+1$ in the formal parameter $t$, the solution $\varphi^{(s+1)}$ is:
\begin{equation}
\varphi^{(s+1)}=-H(\Pi^{(s+1)})-H\left(\left(\sum_{k\geq 1}{\ad_{\varphi^s}^k\over (k+1)!} d_W\varphi^s\right)^{(s+1)}\right).    
\end{equation}

In particular, if we assume equation \eqref{claim} then by Lemma \ref{lem:H(W^s)} \[H\left(\left(\sum_{k\geq 1}{\ad_{\varphi^s}^k\over (k+1)!} d_W\varphi^s\right)^{(s+1)}\right)\in\mathcal{W}_{0}^{-1}(U,\End E\oplus TM).\] By definition of $H$, \[H(\Pi^{(s+1)})=\sum_{k}(A_{s+1,k}t^{s+1}H(\mathfrak{w}^{km}\delta_m),a_{s+1,k}t^{s+1}H(\mathfrak{w}^{km}\delta_m)\partial_n)\] and by Corollary \ref{lem:H(delta_m)} $H(\delta_m\mathfrak{w}^{km})\in\mathcal{W}_{0}^{0}(U, \End E\oplus TM)\mathfrak{w}^{km}$ for every $k\geq 1$. Hence $H(\Pi^{(s+1)})$ is the leading order term and $\varphi^{(s+1)}$ has the expected asymptotic behaviour.   

Let us now prove the claim \eqref{claim} by induction on $s$. At $s=0$,
\begin{equation}
\varphi^{(1)}=-H(\Pi^{(1)})
\end{equation}

and there is nothing to prove. Assume that \eqref{claim} holds true for $s$, then at order $s+1$ we get contributions for every $k=1,\cdots,s$. Thus let start at $k=1$ with $\ad_{\varphi^s}d_W\varphi^s$:
\begin{equation}
\begin{split}
\ad_{\varphi^s}d_W\varphi^s&=\lbrace\varphi^s,d_W\varphi^s\rbrace_\sim\\
&\in\lbrace H(\Pi^s)+\mathcal{W}_{0}^{-1}(U), \Pi^s+\mathcal{W}_{P_m}^0(U)\rbrace_\sim\\
&=\lbrace H(\Pi^s), \Pi^s\rbrace_\sim +\lbrace H(\Pi^s), \mathcal{W}_{P_m}^0(U)\rbrace_\sim +\lbrace\mathcal{W}_{0}^{-1}(U), \Pi^s\rbrace_\sim +\lbrace\mathcal{W}_{0}^{-1}(U), \mathcal{W}_{P_m}^0(U)\rbrace_\sim\\
&\in \lbrace H(\Pi^s), \Pi^s\rbrace_\sim +\mathcal{W}_{P_m}^{0}(U)
\end{split}
\end{equation}
where in the first step we use the inductive assumption on $\varphi^s$ and $d_W\varphi^s$ and the identity \eqref{eq:PHi}. In the last step since $H(\Pi^s)\in\mathcal{W}_{0}^{0}(U)$ then by Lemma \ref{lem:bracket} $\lbrace H(\Pi^s), \mathcal{W}_{P_m}^0(U)\rbrace_\sim\in\mathcal{W}_{P_m}^{0}(U)$. Then, since $\Pi^s\in\mathcal{W}_{P_m}^1(U)$, still by Lemma \ref{lem:bracket} $\lbrace\mathcal{W}_{0}^{-1}(U), \Pi^s\rbrace_\sim\in\mathcal{W}_{P_m}^{0}(U)$ and $\lbrace\mathcal{W}_{0}^{-1}(U), \mathcal{W}_{P_m}^0(U)\rbrace_\sim\in\mathcal{W}_{P_m}^{-1}(U)\subset\mathcal{W}_{P_m}^0(U)$. In addition $\lbrace H(\Pi^s), \Pi^s\rbrace_\sim\in\mathcal{W}_{P_m}^0(U) $, indeed 
\begin{multline*}
\lbrace H(\Pi^s), \Pi^s\rbrace_\sim =\Big(\lbrace H(\Pi_E^s), \Pi_E^s\rbrace_{\End E}+\textbf{ad}(H(\Pi^{CLM,s}), \Pi_E^s)- \textbf{ad}(\Pi^{CLM,s},H(\Pi_E^s)),\\ \lbrace H(\Pi^{CLM,s}), \Pi^{CLM,s}\rbrace\Big)
\end{multline*}
Notice that since $[A,A]=0$ then $\lbrace H(\Pi_E^s), \Pi_E^s\rbrace_{\End E}=0$ and because of the grading \[\lbrace H(\Pi^{CLM,s}), \Pi^{CLM,s}\rbrace=0.\] 
Then by the proof of Lemma \ref{lem:bracket} identities $(2)$ and $(3)$ we get \[\ad(H(\Pi^{CLM,s}), \Pi_E^s),  \ad(\Pi^{CLM,s},H(\Pi_E^s))\in\mathcal{W}_{P_m}^{0}(U)\] therefore 
\begin{equation}
\lbrace H(\Pi^s), \Pi^s\rbrace_\sim\in\mathcal{W}_{P_m}^{0}(U).
\end{equation}   

Now at $k>1$ we have to prove that:
\begin{equation}
\ad_{\varphi^s}\cdots \textsf{ad}_{\varphi^s}d_W\varphi^s\in \mathcal{W}_{P_m}^{0}(U)
\end{equation}
By the fact that $ H(\Pi^s)\in\mathcal{W}_{0}^{0}(U)$, applying Lemma \ref{lem:bracket} $k$ times we finally get:
\begin{equation}
\ad_{\varphi^s}\cdots \ad_{\varphi^s}d_W\varphi^s\in\lbrace H(\Pi^s),\cdots, \lbrace H(\Pi^s), \lbrace H(\Pi^s), \Pi^s\rbrace_\sim\rbrace_\sim\cdots\rbrace_\sim+\mathcal{W}_{P_m}^{0}(U)\in\mathcal{W}_{P_m}^{0}(U).
\end{equation}
\end{proof}

\section{Scattering diagrams from solutions of Maurer-Cartan}
\label{sec:two_walls}
In this section we are going to construct consistent scattering diagrams from solutions of the Maurer-Cartan equation. In particular we will first show how to construct a solution $\Phi$ of the Maurer-Cartan equation from the data of an initial scattering diagram $\mathfrak{D}$ with two non parallel walls. Then we will define its completion $\mathfrak{D}_\infty $ by the solution $\Phi$ and we will prove it is consistent. 
\subsection{From scattering diagram to solution of Maurer-Cartan}
Let the initial scattering diagram $\mathfrak{D}=\lbrace\mathsf{w}_1, \mathsf{w}_2\rbrace$ be such that $\mathsf{w}_1=(m_1,P_1,\theta_1)$ and $\mathsf{w}_2=(m_2,P_2,\theta_2)$ are two non-parallel walls and \[\log(\theta_i)=\sum_{j_i,k_i}\Big(A_{j_i,k_i}\mathfrak{w}^{k_im_i}t^{j_i},  a_{j_i,k_i}\mathfrak{w}^{k_im_i}t^{j_i}\partial_{n_i}\Big)\]
for $i=1,2$. 
As we have already done in Section \ref{sec:single wall}, we can define $\bar{\Pi}_1$ and $\bar{\Pi}_2$ to be solutions of Maurer-Cartan equation, respectively supported on $\mathsf{w}_1$ and $\mathsf{w}_2$. 

Although $\bar{\Pi}\defeq\bar{\Pi}_1+\bar{\Pi}_2$ is not a solution of Maurer-Cartan, by Kuranishi's method we can construct $\Xi=\sum_{j\geq 2}\Xi^{(j)}t^j$ such that the one form $\Phi\in\Omega^1(U,\End E\oplus TM)[\![ t ]\!]$ is $\Phi=\bar{\Pi}+\Xi$ and it is a solution of Maurer-Cartan up to higher order in $\hbar$. Indeed let us we write $\Phi$ as a formal power series in the parameter $t$, $\Phi=\sum_{j\geq 1}\Phi^{(j)}t^j$, then it is a solution of Maurer-Cartan if and only if:
\begin{equation*}
\begin{split}
&d_W\Phi^{(1)}=0\\
&d_W\Phi^{(2)}+\frac{1}{2}\lbrace\Phi^{(1)},\Phi^{(1)}\rbrace_\sim=0\\
&\vdots\\
&d_W\Phi^{(k)}+\frac{1}{2}\left(\sum_{s=1}^{k-1}\lbrace\Phi^{(s)},\Phi^{(k-s)}\rbrace_\sim\right)=0
\end{split}
\end{equation*}
 Moreover, recall from \eqref{rmk:closed} that $\bar{\Pi}_i\defeq\big(\Pi_{E,i}+\Pi_{E,R,i},\Pi^{CLM}_i\big)$, $i=1,2$ are solutions of the Maurer-Cartan equation and they are $d_W$-closed. Therefore at any order in the formal parameter $t$, the solution $\Phi=\bar{\Pi}+\Xi$ is computed as follows:
\begin{equation}\label{eq:recPhi}
\begin{split}
\Phi^{(1)}&=\bar{\Pi}^{(1)}\\
\Phi^{(2)}&=\bar{\Pi}^{(2)}+\Xi^{(2)}, \text{where } d_W\Xi^{(2)}=-\frac{1}{2}(\lbrace\Phi^{(1)},\Phi^{(1)}\rbrace_\sim) \\
\Phi^{(3)}&=\bar{\Pi}^{(3)}+\Xi^{(3)}, \text{where }  d_W\Xi^{(3)}=-\frac{1}{2}\left(\lbrace\Phi^{(1)},\bar{\Pi}^{(2)}+\Xi^{(2)}\rbrace_\sim+\lbrace\bar{\Pi}^{(2)}+\Xi^{(2)},\Phi^{(1)}\rbrace_\sim\right)\\
&\vdots\\
\Phi^{(k)}&=\bar{\Pi}^{(k)}+\Xi^{(k)}, \text{where } d_W\Xi^{(k)}=-\frac{1}{2}\left(\lbrace\Phi,\Phi\rbrace_\sim\right)^{(k)}. 
\end{split}
\end{equation}  
In order to explicitly compute $\Xi$ we want to ``invert'' the differential $d_W$ and this can be done by choosing a homotopy operator. Let us recall that a homotopy operator is a homotopy $H$ of morphisms $p$ and $\iota$, namely $H\colon \Omega^{\bullet}(U)\to \Omega^{\bullet}[-1](U)$, $p\colon\Omega^{\bullet}(U)\to H^{\bullet}(U)$ and $\iota\colon H^{\bullet}(U)\to\Omega^{\bullet}(U)$ such that $\text{id}_{\Omega^\bullet}-\iota\circ p=d_WH+Hd_W$. 
Let us now explicitly define the homotopy operator $\mathbf{H}$. Let $U$ be an open affine neighbourhood of $m_0=P_1\cap P_2$, and fix $q_0\in \left(H_{-,m_1}\cap H_{-,m_2}\right)\cap U$. Then choose a set of coordinates centred in $q_0$ and denote by $(u_{m},u_{m^\perp})$ a choice of such coordinates such that with respect to a ray $P_m=m_0+\R_{\geq 0}m$, $u_{m^\perp}$ is the coordinate orthogonal to $P_m$ and $u_{m}$ is tangential to $P_m$. 
Moreover recall the definition of morphisms $p$ and $\iota$, namely 
$p\defeq\bigoplus_m p_m$ and $p_m$ maps functions $\alpha\mathfrak{w}^m\in\Omega^0(U)\mathfrak{w}^m$
to $\alpha(q_0)\mathfrak{w}^m$, while $\iota\defeq\bigoplus_m\iota_m$ and $\iota_m$ is the embedding of constant function at degree zero, and it is zero otherwise. 

\begin{definition}\label{def:homotopyH}
The homotopy operator $\mathbf{H}=\bigoplus_m\mathbf{H}_m\colon\bigoplus_m\Omega^{\bullet}(U)\mathfrak{w}^m\to\bigoplus_m\Omega^{\bullet}(U)[-1]\mathfrak{w}^m$ is defined as follows. 
For any $0$-form $\alpha\in\Omega^0(U)$, $\mathbf{H}(\alpha\mathfrak{w}^m)=0$, since there are no degree $-1$-forms. 
For any $1$-form $\alpha\in\Omega^1(U)$, in local coordinates we have $\alpha=f_0(u_m,u_{m^\perp})du_m+f_1(u_m,u_{m^\perp})du_{m^\perp}$ and 
\[
\mathbf{H}(\alpha\mathfrak{w}^m)\defeq\mathfrak{w}^m\left(\int_0^{u_m}f_0(s,u_{m^\perp})ds+\int_0^{u_{m^\perp}}f_1(0,r)dr\right)
\] 

Finally since any $2$-forms $\alpha\in\Omega^2(U)$ in local coordinates can be written $\alpha=f(u_m,u_{m^\perp})du_m\wedge du_{m^\perp}$, then 
\[
\mathbf{H}(\alpha\mathfrak{w}^m)\defeq\mathfrak{w}^m\left(\int_0^{u_m}f(s,u_{m^\perp})ds\right)du_{m^\perp}.
\]
\end{definition}

The homotopy $\mathbf{H}$ seems defined \textit{ad hoc} for each degree of forms, however it can be written in an intrinsic way for every degree, as in Definition 5.12 \cite{MCscattering}. We have defined $\mathbf{H}$ in this way because it is clearer how to compute it in practice. 

\begin{lemma}
The following identity 
\begin{equation}\label{eq:idH2}
\text{id}_{\Omega^\bullet}-\mathbf{\iota}_m\circ p_m=\mathbf{H}_md_W+d_W\mathbf{H}_m
\end{equation}
holds true for all $m\in\Lambda$. 
\end{lemma}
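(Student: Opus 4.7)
The plan is to check the identity pointwise, degree by degree. Since $M$ is two-dimensional the de Rham complex on $U$ lives in degrees $0, 1, 2$ only, and on scalar forms the factor $\mathfrak{w}^m$ is inert for both operators (this is exactly how $d_W$ was treated on the subspace $\mathfrak{w}^m \cdot \Omega^\bullet(U)$ in the verification of \eqref{eq:PHi}). So the problem reduces to the ordinary Poincaré-type identity for $\mathbf{H}_m$ viewed as an operator on $\Omega^\bullet(U)$, and in each degree it is a direct application of the fundamental theorem of calculus.

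In degree $0$, with $\alpha = f$, one has $\mathbf{H}_m(f\mathfrak{w}^m) = 0$, so it is enough to compute $\mathbf{H}_m d_W(f \mathfrak{w}^m) = \mathfrak{w}^m\bigl(\int_0^{u_m}\partial_s f(s,u_{m^\perp})ds + \int_0^{u_{m^\perp}}\partial_r f(0,r)dr\bigr)$ and telescope via the fundamental theorem of calculus to $(f - f(q_0))\mathfrak{w}^m = (\mathrm{id} - \iota_m \circ p_m)(f \mathfrak{w}^m)$. In degree $2$, $d_W \alpha = 0$ for dimensional reasons and a direct differentiation of $\mathbf{H}_m(\alpha \mathfrak{w}^m) = \mathfrak{w}^m(\int_0^{u_m} f(s,u_{m^\perp})ds) du_{m^\perp}$ reproduces $\alpha \mathfrak{w}^m$ at once, while $\iota_m \circ p_m$ vanishes by definition on positive-degree forms.

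The only case where signs must be tracked carefully is degree $1$, and this is what I expect to be the main (mild) obstacle in the calculation. Writing $\alpha = f_0\, du_m + f_1\, du_{m^\perp}$, differentiating the definition of $\mathbf{H}_m$ yields
\begin{equation*}
d_W \mathbf{H}_m(\alpha\mathfrak{w}^m) = \mathfrak{w}^m f_0\, du_m + \mathfrak{w}^m\Bigl(\textstyle\int_0^{u_m}\partial_{u_{m^\perp}} f_0\, ds + f_1(0,u_{m^\perp})\Bigr) du_{m^\perp},
\end{equation*}
and, using $d_W \alpha = (\partial_{u_m} f_1 - \partial_{u_{m^\perp}} f_0)\, du_m \wedge du_{m^\perp}$ followed by integration in $u_m$,
\begin{equation*}
\mathbf{H}_m d_W(\alpha\mathfrak{w}^m) = \mathfrak{w}^m\Bigl(f_1(u_m,u_{m^\perp}) - f_1(0,u_{m^\perp}) - \textstyle\int_0^{u_m}\partial_{u_{m^\perp}} f_0\, ds\Bigr) du_{m^\perp}.
\end{equation*}
Summing, the two $\int_0^{u_m}\partial_{u_{m^\perp}} f_0\, ds$ terms cancel and the boundary value $f_1(0,u_{m^\perp})$ telescopes against its negative, leaving precisely $\alpha \mathfrak{w}^m = (\mathrm{id} - \iota_m \circ p_m)(\alpha \mathfrak{w}^m)$. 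No element of the analytic framework of Section \ref{sec:single wall} (Sobolev-type spaces, asymptotic estimates, \emph{etc.}) enters: the lemma is purely a combinatorial consequence of the definition of $\mathbf{H}_m$.
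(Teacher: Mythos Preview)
Your proof is correct and follows essentially the same approach as the paper's: a degree-by-degree verification using the fundamental theorem of calculus, with $\mathfrak{w}^m$ treated as inert. The computations in degrees $0$, $1$, and $2$ match the paper's line for line, and your remark that no analytic input is needed is accurate.
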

\begin{proof} We are going to prove the identity separately for $0, 1$ and $2$ forms.

Let $\alpha=\alpha_0$ be of degree zero, then by definition $\mathbf{H}_m(\alpha\mathfrak{w}^m)=0$ and $\mathbf{\iota}_m\circ p_m(\alpha\mathfrak{w}^m)=\alpha_0(q_0)$. Then $d_W(\alpha\mathfrak{w}^m)=\mathfrak{w}^m\big(\frac{\partial\alpha_0}{\partial u_m}du_m+\frac{\partial\alpha_0}{\partial u_{m^\perp}}du_{m^\perp}\big)$. Hence
\[
\begin{split}
\mathbf{H}_md_W(\alpha_0\mathfrak{w}^m)&=\mathfrak{w}^m\int_{0}^{u_m}\frac{\partial\alpha_0(s,u_{m^\perp})}{\partial s}ds+\mathfrak{w}^m\int_0^{u_{m^\perp}}\frac{\partial\alpha_0}{\partial u_{m^\perp}}(0,r)dr \\
&=\mathfrak{w}^m[\alpha_0(u_m,u_{m^\perp})-\alpha_0(0,u_{m^\perp})+\alpha_0(0,u_{m^\perp})-\alpha_0(0,0)].
\end{split}
\]

Then consider $\alpha\in\Omega^1(U)\mathfrak{w}^m$. By definition $\mathbf{\iota}_m\circ p_m(\alpha\mathfrak{w}^m)=0$ and 
\[
\begin{split}
\mathbf{H}_md_W(\alpha\mathfrak{w}^m)&=\mathbf{H}d_W\left(\mathfrak{w}^m(f_0du_m+f_1du_{m^\perp})\right)\\
&=\mathbf{H}_m\left(\mathfrak{w}^m\left(\frac{\partial f_0}{\partial u_{m^\perp}}du_{m^\perp}\wedge du_m+\frac{\partial f_1}{\partial u_{m}}du_m\wedge du_{m^\perp}\right)\right)\\
&=\mathfrak{w}^m\left(\int_0^{u_m}\left(-\frac{\partial f_0}{\partial u_{m^\perp}}+\frac{\partial f_1}{\partial s}\right)ds\right)du_{m^\perp}\\
&=\left(-\int_0^{u_m}\frac{\partial f_0}{\partial u_{m^\perp}}(s,u_{m^\perp})ds +f_1(u_m,u_{m^\perp})-f_1(0,u_{m^\perp})\right)\mathfrak{w}^mdu_{m^\perp}
\end{split}
\] 
\[
\begin{split}
d_W\mathbf{H}_m(\alpha\mathfrak{w}^m)&=d_W\mathbf{H}(\mathfrak{w}^m\left(f_0du_m+f_1du_{m^\perp}\right))\\
&=d_W\left(\mathfrak{w}^m\left(\int_0^{u_m}f_0(s,u_{m^\perp})ds +\int_0^{u_{m^\perp}}f_1(0,r)dr\right)\right)\\
&=\mathfrak{w}^m d\left(\int_0^{u_m}f_0(s,u_{m^\perp})ds +\int_0^{u_{m^\perp}}f_1(0,r)dr\right)\\
&=\mathfrak{w}^m\left(f_0(u_m,u_{m^\perp})du_m+\frac{\partial}{\partial u_{m^\perp}}\left(\int_0^{u_m}f_0(s,u_{m^\perp})ds\right)du_{m^\perp}+f_1(0,u_{m^\perp})du_{m^\perp}\right).
\end{split}
\] 

We are left to prove the identity when $\alpha$ is of degree two: by degree reasons $d_W(\alpha\mathfrak{w}^m)=0$ and $\iota_m\circ p_m(\alpha\mathfrak{w}^m)=0$. Then 
\[
\begin{split}
d_W\mathbf{H}_m(\alpha\mathfrak{w}^m)&=\mathfrak{w}^md\left(\left(\int_{0}^{u_m}f(s,u_{m^\perp})ds\right)du_{m^\perp}\right)\\
&=\mathfrak{w}^mf(u_m,u_{m^\perp})du_m\wedge du_{m^\perp}. 
\end{split}
\] 
\end{proof} 

\begin{prop}[{see prop 5.1 in \cite{MCscattering}}]\label{prop:Kuranishi}
Assume that $\Phi$ is a solution of 
\begin{equation}\label{eq:Phi}
\Phi=\bar{\Pi}-\frac{1}{2}\mathbf{H}\left(\lbrace\Phi,\Phi\rbrace_\sim\right)   
\end{equation}
Then $\Phi$ is a solution of the Maurer-Cartan equation.
\end{prop}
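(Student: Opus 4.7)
The plan is to run the standard Kuranishi-type argument: apply $d_W$ to the defining fixed-point equation, rewrite using the homotopy identity \eqref{eq:idH2} together with the Leibniz rule and the Jacobi identity for $\{\cdot,\cdot\}_\sim$, and then show inductively in the formal parameter $t$ that the Maurer--Cartan obstruction vanishes.

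Concretely, set $\mathcal{O}(\Phi)\defeq d_W\Phi+\tfrac{1}{2}\{\Phi,\Phi\}_\sim$. Applying $d_W$ to \eqref{eq:Phi} and using that each $\bar{\Pi}_i$ is $d_W$-closed (Proposition \ref{rmk:closed}) so that $d_W\bar{\Pi}=0$, I get
\[
d_W\Phi=-\tfrac{1}{2}\,d_W\mathbf{H}\bigl(\{\Phi,\Phi\}_\sim\bigr).
\]
Next I feed the homotopy identity $\mathrm{id}-\iota\circ p=\mathbf{H}d_W+d_W\mathbf{H}$ into the right-hand side to obtain
\[
d_W\Phi=-\tfrac{1}{2}\{\Phi,\Phi\}_\sim+\tfrac{1}{2}\,\iota\circ p\bigl(\{\Phi,\Phi\}_\sim\bigr)+\tfrac{1}{2}\,\mathbf{H}d_W\bigl(\{\Phi,\Phi\}_\sim\bigr),
\]
i.e.\ $\mathcal{O}(\Phi)=\tfrac{1}{2}\iota\circ p(\{\Phi,\Phi\}_\sim)+\tfrac{1}{2}\mathbf{H}d_W(\{\Phi,\Phi\}_\sim)$. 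Since $\Phi$ is of degree $1$, the bracket $\{\Phi,\Phi\}_\sim$ is a $2$-form, so $\iota\circ p(\{\Phi,\Phi\}_\sim)=0$ by the very definition of $p$ (which is nonzero only on $0$-forms). Using the graded Leibniz rule $d_W\{\Phi,\Phi\}_\sim=2\{d_W\Phi,\Phi\}_\sim$ and the Jacobi identity $\{\{\Phi,\Phi\}_\sim,\Phi\}_\sim=0$, I can replace $d_W\Phi$ by $\mathcal{O}(\Phi)-\tfrac{1}{2}\{\Phi,\Phi\}_\sim$ and deduce
\[
\mathcal{O}(\Phi)=\mathbf{H}\bigl\{\mathcal{O}(\Phi),\Phi\bigr\}_\sim.
\]

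Finally, I close the argument by induction in the formal parameter $t$. At order $t^1$, $\Phi^{(1)}=\bar{\Pi}^{(1)}$ and $\{\Phi,\Phi\}_\sim$ starts at order $t^2$, so $\mathcal{O}(\Phi)^{(1)}=d_W\bar{\Pi}^{(1)}=0$. For $k\geq 2$, assuming $\mathcal{O}(\Phi)^{(j)}=0$ for all $1\leq j<k$, the right-hand side $\mathbf{H}\{\mathcal{O}(\Phi),\Phi\}_\sim$ only couples $\mathcal{O}(\Phi)^{(i)}$ with $\Phi^{(j)}$ for $i+j=k$, $j\geq 1$, forcing $i\leq k-1$; hence $\mathcal{O}(\Phi)^{(k)}=0$. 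I expect the only subtle point, beyond bookkeeping, to be the verification that $\{\cdot,\cdot\}_\sim$ satisfies the graded Leibniz and Jacobi properties with the correct signs on $\Omega^\bullet(\mathcal{L},\End E\oplus T\mathcal{L})$, which follows from the isomorphism with $\KS(\check{X},E)$ via $\FF$ since the target is a genuine dgLa; once these are in place the rest is formal.
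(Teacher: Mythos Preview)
Your argument is correct, and it is the standard Kuranishi argument valid in any dimension. The paper, however, takes a shorter path by exploiting the standing assumption that $\dim_{\C}\check{X}=2$: since $\Phi$ has degree $1$, the bracket $\{\Phi,\Phi\}_\sim$ is a $2$-form, hence automatically $d_W$-closed for degree reasons (there are no $3$-forms). Combined with $p(\{\Phi,\Phi\}_\sim)=0$, the homotopy identity \eqref{eq:idH2} gives directly $\{\Phi,\Phi\}_\sim=d_W\mathbf{H}(\{\Phi,\Phi\}_\sim)$, and applying $d_W$ to \eqref{eq:Phi} yields $d_W\Phi=-\tfrac{1}{2}\{\Phi,\Phi\}_\sim$ in one line. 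Your route via the Leibniz rule, the Jacobi identity and the $t$-adic induction is what one would need in higher dimension (and is essentially the general argument in \cite{MCscattering}), but here those steps are superfluous: both $d_W\{\Phi,\Phi\}_\sim$ and $\{\{\Phi,\Phi\}_\sim,\Phi\}_\sim$ vanish trivially because they would land in degree $3$.
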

\begin{proof}
First notice that by definition $p(\lbrace\Phi,\Phi\rbrace_\sim)=0$ and by degree reasons $d_W(\lbrace\Phi,\Phi\rbrace_\sim)=0$ too. Hence by identity \eqref{eq:idH2} we get that $\lbrace\Phi,\Phi,\rbrace_\sim=d_W\mathbf{H}(\lbrace\Phi,\Phi\rbrace_\sim)$, and if $\Phi$ is a solution of equation \eqref{eq:Phi} then $d_W\Phi=d_W\bar{\Pi}-\frac{1}{2}d_W\mathbf{H}(\lbrace\Phi,\Phi\rbrace_\sim)=-\frac{1}{2}d_W\mathbf{H}(\lbrace\Phi,\Phi\rbrace_\sim)$. 
\end{proof}
From now on we will look for solutions $\Phi$ of equation \eqref{eq:Phi} rather than to the Maurer-Cartan equation. The advantage is that we have an integral equation instead of a differential equation, and $\Phi$ can be computed by its expansion in the formal parameter $t$, namely $\Phi=\sum_{j\geq 1}\Phi^{(j)}t^j$.  
\begin{notation}
Let $P_{m_1}=m_0-m_1\mathbb{R}$ and $P_{m_2}=m_0-m_2\mathbb{R}$ and let $(u_{m_1},u_{m_1^\perp})$ and $(u_{m_2},u_{m_2^\perp})$ be respectively two basis of coordinates in $U$, centred in $q_0$ as above. Let $m_a\defeq a_1m_1+a_2m_2$, consider the ray $P_{m_a}\defeq m_0-m_a\mathbb{R}_{\geq 0}$ and choose coordinates $u_{m_{a}}\defeq \left(-a_2u_{m_1^\perp}+a_1u_{m_2^\perp}\right)$ and $u_{m_a^\perp}\defeq\left(a_1u_{m_1^\perp}+a_2u_{m_2^\perp}\right)$.    
\end{notation}
\begin{remark}\label{rmk:H(delta wedge delta)}
If $\alpha=\delta_{m_1}\wedge\delta_{m_2}$, then by the previous choice of coordinates 
\[\delta_{m_1}\wedge\delta_{m_2}=\frac{e^{-\frac{u_{m_1^\perp}^2+u_{m_2^\perp}^2}{\hbar}}}{{\hbar\pi}}du_{m_1^\perp}\wedge du_{m_2^\perp}=\frac{e^{-\frac{u_{m_{a}}^2+u_{m_a^\perp}^2}{(a_1^2+a_2^2)\hbar}}}{{\hbar\pi}}du_{u_{m_a}^\perp}\wedge du_{m_{a}}.\]
In particular we explicitly compute $\mathbf{H}(\delta_{m_1}\wedge\delta_{m_2}\mathfrak{w}^{lm_a})$:
\begin{equation}
\begin{split}
\mathbf{H}(\alpha\mathfrak{w}^{lm_a})&=\mathfrak{w}^{lm_a}\left(\int_{0}^{u_{m_{a}}}\frac{e^{-\frac{s^2+u_{m_a^\perp}^2}{(a_1^2+a_2^2)\hbar}}}{\hbar\pi}ds\right) du_{u_a^\perp}=\mathfrak{w}^{lm_a}\left(\int_{0}^{u_{m_{a}}}\frac{e^{-\frac{s^2}{(a_1^2+a_2^2)\hbar}}}{\sqrt{\hbar\pi}}ds\right) \frac{e^{-\frac{u_{m_a^{\perp}}^2}{(a_1^2+a_2^2)\hbar}}}{\sqrt{\hbar\pi}} du_{m_a^{\perp}}
\end{split}
\end{equation}
Hence $\mathbf{H}\big(\delta_{m_1}\wedge\delta_{m_2}\mathfrak{w}^{lm_a}\big)=f(\hbar,u_{m_{a}})\delta_{m_a}$ where $f(\hbar,u_{m_{a}})=\int_{0}^{u_{m_{a}}}\frac{e^{-\frac{s^2}{(a_1^2+a_2^2)\hbar}}}{\sqrt{\hbar\pi}}ds\in O_{loc}(1)$.
\end{remark}

In order to construct a consistent scattering diagram from the solution $\Phi$ we introduce labeled ribbon trees. Indeed via the combinatorial of such trees we  can rewrite $\Phi$ as a sum over primitive Fourier mode, coming from the contribution of the out-going edge of the trees. 
\subsubsection{Labeled ribbon trees} 
Let us briefly recall the definition of labeled ribbon trees, which was introduced in \cite{MCscattering}. 
\begin{definition}[Definition 5.2 in \cite{MCscattering}]\label{def:trees}
A k-tree $T$ is the datum of a finite set of vertices $V$, together with a decomposition $V=V_{in}\sqcup V_0\sqcup\lbrace v_T\rbrace$, and a finite set of edges $E$, such that, given the two boundary maps $\partial_{in},\partial_{out}:E\rightarrow V$ (which respectively assign to each edge its incoming and outgoing vertices), satisfies the following assumption:
\begin{enumerate}
\item $\#V_{in}=k$ and for any vertex $v\in V_{in}$, $\#\partial_{in}^{-1}(v)=0$ and $\#\partial_{out}^{-1}(v)=1$; 
\item for any vertex $v\in V_0$, $\#\partial_{in}^{-1}(v)=1$ and $\#\partial_{out}^{-1}(v)=2$;
\item $v_T$ is such that $\#\partial_{in}^{-1}(v_T)=0$ and $\#\partial_{out}^{-1}(v)=1$.
\end{enumerate}
We also define $e_T=\partial_{in}^{-1}(v_T)$. 
\end{definition}
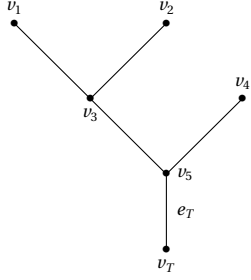
\begin{figure}[h]
\center
\begin{tikzpicture}
\draw (0,4) -- (1,3);
\draw (2,4) -- (1,3);
\draw (1,3) -- (2,2);
\draw (3,3) -- (2,2);
\draw (2,2) -- (2,1);
\node[below, font=\tiny] at (2,1) {$v_T$};
\node[above, font=\tiny] at (0,4) {$v_1$};
\node[above, font=\tiny] at (2,4) {$v_2$};
\node[below, font=\tiny] at (1,3) {$v_3$};
\node[above, font=\tiny] at (3,3) {$v_4$};
\node[right, font=\tiny] at (2,2) {$v_5$};
\node[font=\tiny] at (2,1) {$\bullet$};
\node[ font=\tiny] at (0,4) {$\bullet$};
\node[font=\tiny] at (2,4) {$\bullet$};
\node[font=\tiny] at (1,3) {$\bullet$};
\node[font=\tiny] at (3,3) {$\bullet$};
\node[font=\tiny] at (2,2) {$\bullet$};
\node[right, font=\tiny] at (2,1.5) {$e_T$};
\end{tikzpicture}
\caption{This is an example of a 3-tree, where the set of vertices is decomposed by $V_{in}=\lbrace v_1, v_2, v_4\rbrace$, $V_0=\lbrace v_3, v_5\rbrace$.}
\label{t:3tree}
\end{figure}
Two k-trees $T$ and $T'$ are \textit{isomorphic} if there are bijections $V\cong V'$ and $E\cong E'$ preserving the decomposition $V_0\cong V'_0$, $V_{in}\cong V'_{in}$ and $\lbrace v_T\rbrace\cong\lbrace v_{T'}\rbrace$ and the boundary maps $\partial_{in}, \partial_{out}$. 
  
It will be useful in the following to introduce the definition of topological realization $\mathcal{T}(T)$ of a k-tree $T$, namely $\mathcal{T}(T)\defeq\big(\coprod_{e\in E}[0,1]\big)/\thicksim$, where $\thicksim$ is the equivalence relation that identifies boundary points of edges with the same image in $V\setminus\lbrace v_T\rbrace$. 

Since we need to keep track of all the possible combinations while we compute commutators (for instance for $\Phi^{(3)}$  there is the contribution of $\lbrace\Phi^{(1)},\Phi^{(2)}\rbrace_\sim$ and $\lbrace\Phi^{(2)},\Phi^{(1)}\rbrace_\sim$), we introduce the notion of ribbon trees:
\begin{definition}[Definition 5.3 in \cite{MCscattering}]\label{def:ribbon trees}
A ribbon structure on a k-tree is a cyclic ordering of the vertices. It can be viewed as an embedding $\mathcal{T}(T)\hookrightarrow\mathbb{D}$, where $\mathbb{D}$ is the disk in $\mathbb{R}^2$, and the cyclic ordering is given according to the anticlockwise orientation of $\mathbb{D}$. 
\end{definition}
Two ribbon k-trees $T$ and $T'$ are \textit{isomorphic} if they are isomorphic as k-trees and the isomorphism preserves the cyclic order. The set of all isomorphism classes of ribbon k-trees will be denoted by $\mathbb{R}\mathbb{T}_k$. 
As an example, the following two 2-trees are not isomorphic:
\begin{figure}[h]
\begin{tikzpicture}
\draw (0,2) -- (1,1);
\draw (2,2) -- (1,1);
\draw (1,1) -- (1,0);
\node[below, font=\tiny] at (1,0) {$v_T$};
\node[above, font=\tiny] at (0,2) {$v_1$};
\node[above, font=\tiny] at (2,2) {$v_2$};
\node[right, font=\tiny] at (1,1) {$v_3$};
\node[font=\tiny] at (1,0) {$\bullet$};
\node[ font=\tiny] at (0,2) {$\bullet$};
\node[font=\tiny] at (2,2) {$\bullet$};
\node[font=\tiny] at (1,1) {$\bullet$};
\node[right, font=\tiny] at (1,0.5) {$e_T$};
\end{tikzpicture}
\begin{tikzpicture}
\draw (0,2) -- (1,1);
\draw (2,2) -- (1,1);
\draw (1,1) -- (1,0);
\node[below, font=\tiny] at (1,0) {$v_T'$};
\node[above, font=\tiny] at (0,2) {$v_2$};
\node[above, font=\tiny] at (2,2) {$v_1$};
\node[right, font=\tiny] at (1,1) {$v_4$};
\node[font=\tiny] at (1,0) {$\bullet$};
\node[ font=\tiny] at (0,2) {$\bullet$};
\node[font=\tiny] at (2,2) {$\bullet$};
\node[font=\tiny] at (1,1) {$\bullet$};
\node[right, font=\tiny] at (1,0.5) {$e_T'$};
\end{tikzpicture}
\end{figure}

In order to keep track of the $\hbar$ behaviour while we compute the contribution from the commutators, let us decompose the bracket on the dgLa as follows:
\begin{definition}\label{label}
Let $(A,\alpha)=(A_Jdx^J\mathfrak{w}^{m_1},\alpha_Jdx^J\mathfrak{w}^{m_1}\partial_{n_1})\in\Omega^p(U,\End E\oplus TM)\mathfrak{w}^{m_1}$ and $(B,\beta)=(B_Kdx^K\mathfrak{w}^{m_2},\beta_Kdx^K\mathfrak{w}^{m_2}\partial_{n_2})\in\Omega^q(U,\End E\oplus TM)\mathfrak{w}^{m_2}$. Then we decompose $\lbrace\cdot ,\cdot\rbrace_\sim$ as the sum of: 
\begin{enumerate}
\item[$\natural$] $\lbrace(A,\alpha),(B,\beta)\rbrace_{\natural}\defeq\left(\alpha\wedge B\langle n_1,m_2\rangle- \beta\wedge A\langle n_2,m_1\rangle+ \lbrace A,B\rbrace_{\End E},\lbrace\alpha,\beta\rbrace\right)$
\item[$\flat$] $\lbrace(A,\alpha),(B,\beta)\rbrace_{\flat}\defeq\left(i\hbar \beta_Kn_2^q\frac{\partial A_J}{\partial x_q}dx^J\wedge dx^K\mathfrak{w}^{m_1+m_2}, \beta(\nabla_{\partial_{n_2}}\alpha)\mathfrak{w}^{m_1+m_2}\partial_{n_1}\right)$
\item[$\sharp$] $\lbrace(A,\alpha),(B,\beta)\rbrace_{\sharp}\defeq\left(i\hbar \alpha_Jn_1^q\frac{\partial B_K}{\partial x_q}dx^K\wedge dx^J\mathfrak{w}^{m_1+m_2}, \alpha(\nabla_{\partial_{n_1}}\beta)\mathfrak{w}^{m_1+m_2}\partial_{n_2}\right)$
\item[$\star$] $\lbrace(A,\alpha),(N,\beta)\rbrace_{\star}\defeq i\hbar\left(\alpha_Jn_1^qB_KA_q(\phi)dx^J\wedge dx^K-n_2^q\beta_KA_q(\phi)A_Jdx^K\wedge  dx^J,0\right)$.
\end{enumerate}
\end{definition}

The previous definition is motivated by the following observation: the label $\natural$ contains terms of the Lie bracket $\lbrace\cdot,\cdot\rbrace_\sim$ which leave unchanged the behaviour in $\hbar$. Then both the labels $\flat$ and $\sharp$ contain terms which contribute with an extra $\hbar$ factor and at the same time contain derivatives. The last label $\star$ contains terms which contribute with an extra $\hbar$ but do not contain derivatives. 
 
\begin{definition}\label{def:labeled ribbon tree}
A labeled ribbon k-tree is a ribbon k-tree $T$ together with:
\begin{enumerate}
\item[(i)] a label $\natural$, $\sharp$, $\flat$, $\star$ -as defined in Definition \ref{label}- for each vertex in $V_0$;
\item[(ii)] a label $(m_e,j_e)$ for each incoming edge $e$, where $m_e$ is the Fourier mode of the incoming vertex and $j_e\in\Z_{>0}$ gives the order in the formal parameter $t$.
\end{enumerate} 
\end{definition}
There is an induced labeling of all the edges of the trees defined as follows: at any trivalent vertex with incoming edges $e_1,e_2$ and outgoing edge $e_3$ we define $(m_{e_3},j_{e_3})=(m_{e_1}+m_{e_2}, j_{e_1}+j_{e_2})$. We will denote by $(m_T,j_T)$ the label corresponding to the unique incoming edge of $\nu_T$. 
Two labeled ribbon k-trees $T$ and $T'$ are \textit{isomorphic} if they are isomorphic as ribbon k-trees and the isomorphism preserves the labeling. The set of equivalence classes of labeled ribbon k-trees will be denoted by $\mathbb{L}\mathbb{R}\mathbb{T}_k$. We also introduce the following notation for equivalence classes of labeled ribbon trees:
\begin{notation}
We denote by $\mathbb{LRT}_{k,0}$ the set of equivalence classes of $k$ labeled ribbon trees such that they have only the label $\natural$. We denote by $\mathbb{LRT}_{k,1}$ the complement set, namely $\mathbb{LRT}_{k,1}=\mathbb{LRT}_k-\mathbb{LRT}_{k,0}$.
\end{notation}
Let us now define the operator $\mathfrak{t}_{k,T}$ which allows to write the solution $\Phi$ in terms of labeled ribbon trees.
\begin{definition}
Let $T$ be a labeled ribbon k-tree, then the operator 
\begin{equation}
\mathfrak{t}_{k,T}:\Omega^1(U,\End E\oplus TM)^{\otimes k}\rightarrow\Omega^{1}(U, \End E\oplus TM)
\end{equation}
is defined as follows: it aligns the input with the incoming vertices according with the cyclic ordering and it labels the incoming edges (as in part (ii) of Definition \ref{def:labeled ribbon tree}). Then it assigns at each vertex in $V_0$ the commutator according with the part (i) of Definition \ref{label}. Finally it assigns the homotopy operator $-\mathbf{H}$ to each outgoing edge. 
\end{definition}
In particular the solution $\Phi$ of equation \eqref{eq:Phi} can be written as a sum on labeled ribbon k-trees as follows:
\begin{equation}\label{def:Phitree}
\Phi=\sum_{k\geq 1}\sum_{T\in\mathbb{LRT}_k}\frac{1}{2^{k-1}}\mathfrak{t}_{k,T}(\bar{\Pi},\cdots ,\bar{\Pi}).
\end{equation} 
Recall that by definition 
\[\lbrace\left(A,\alpha\partial_{n_1}\right)\mathfrak{w}^{k_1m_1},\left(B,\beta\partial_{n_2}\right)\mathfrak{w}^{k_2m_2}\rbrace_\sim=\left(C,\gamma\partial_{\langle k_2m_2,n_1\rangle n_2-\langle k_1m_1,n_2\rangle n_1}\right)\mathfrak{w}^{k_1m_1+k_2m_2}\]
for some $(A,\alpha)\in\Omega^s(U,\End E\oplus TM), (B,\beta)\Omega^r(U,\End E\oplus TM)$ and $(C,\gamma)\in\Omega^{r+s}(U,\End E\oplus TM)$, hence the Fourier mode of any labeled brackets has the same frequency $m_e=k_1m_1+k_2m_2$ independently of the label $\natural, \flat, \sharp,\star$. In particular each $m_e$ can be written as $m_e=l (a_1m_1+a_2m_2)$ for some primitive elements $(a_1,a_2)\in\big(\Z^2_{\geq 0}\big)_{\text{prim}}$. Let us introduce the following notation: 
\begin{notation}
Let $a=(a_1,a_2)\in\left(\Z_{\geq0}^2\right)_{\text{prim}}$ and define $m_a\defeq a_1m_1+a_2m_2$. Then we define $\Phi_a$ to be the sum over all trees of the contribution to $\mathfrak{t}_{k,T}(\bar{\Pi},\cdots, \bar{\Pi})$ with Fourier mode $\mathfrak{w}^{lm_a}$ for every $l\geq1$. In particular we define $\Phi_{(1,0)}\defeq\bar{\Pi}_1$ and $\Phi_{(0,1)}\defeq\bar{\Pi}_2$. 
\end{notation}
It follows that the solution $\Phi$ can be written as a sum on primitive Fourier mode as follows:
\begin{equation}\label{eq:Phi_primitive}
    \Phi=\sum_{a\in\big(\Z^2_{\geq 0}\big)_{\text{prim}}}\Phi_{a}.
\end{equation}

As an example, let us consider $\Phi^{(2)}$. From equation \eqref{eq:Phi} we get   
\[
\Phi^{(2)}=\bar{\Pi}^{(2)}-\frac{1}{2}\mathbf{H}\left(\lbrace\bar{\Pi}^{(1)},\bar{\Pi}^{(1)}\rbrace_\sim\right)\]
and the possible 2-trees $T\in\mathbb{LRT}_2$, up to choice of the initial Fourier modes, are represented in figure \ref{fig:2trees}.    
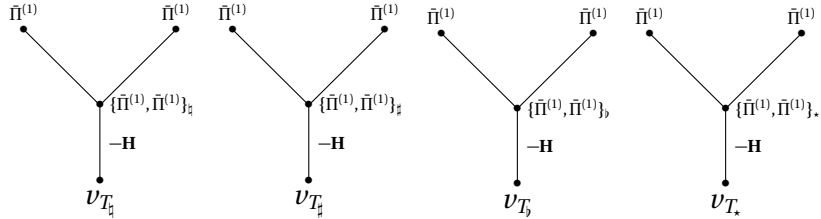
\begin{figure}[h]
\begin{tikzpicture}
\draw (0,2) -- (1,1);
\draw (2,2) -- (1,1);
\draw (1,1) -- (1,0);
\node[below] at (1,0) {$v_{T_\natural}$};
\node[above, font=\tiny] at (0,2) {$\bar{\Pi}^{(1)}$};
\node[above, font=\tiny] at (2,2) {$\bar{\Pi}^{(1)}$};
\node[right, font=\tiny] at (1,1) {$\lbrace \bar{\Pi}^{(1)},\bar{\Pi}^{(1)}\rbrace_\natural $};
\node[font=\tiny] at (1,0) {$\bullet$};
\node[ font=\tiny] at (0,2) {$\bullet$};
\node[font=\tiny] at (2,2) {$\bullet$};
\node[font=\tiny] at (1,1) {$\bullet$};
\node[right, font=\tiny] at (1,0.5) {$-\mathbf{H}$};
\end{tikzpicture}
\begin{tikzpicture}
\draw (0,2) -- (1,1);
\draw (2,2) -- (1,1);
\draw (1,1) -- (1,0);
\node[below] at (1,0) {$v_{T_\sharp}$};
\node[above, font=\tiny] at (0,2) {$\bar{\Pi}^{(1)}$};
\node[above, font=\tiny] at (2,2) {$\bar{\Pi}^{(1)}$};
\node[right, font=\tiny] at (1,1) {$\lbrace \bar{\Pi}^{(1)},\bar{\Pi}^{(1)}\rbrace_\sharp $};
\node[font=\tiny] at (1,0) {$\bullet$};
\node[ font=\tiny] at (0,2) {$\bullet$};
\node[font=\tiny] at (2,2) {$\bullet$};
\node[font=\tiny] at (1,1) {$\bullet$};
\node[right, font=\tiny] at (1,0.5) {$-\mathbf{H}$};
\end{tikzpicture}
\begin{tikzpicture}
\draw (0,2) -- (1,1);
\draw (2,2) -- (1,1);
\draw (1,1) -- (1,0);
\node[below] at (1,0) {$v_{T_\flat}$};
\node[above, font=\tiny] at (0,2) {$\bar{\Pi}^{(1)}$};
\node[above, font=\tiny] at (2,2) {$\bar{\Pi}^{(1)}$};
\node[right, font=\tiny] at (1,1) {$\lbrace \bar{\Pi}^{(1)},\bar{\Pi}^{(1)}\rbrace_\flat $};
\node[font=\tiny] at (1,0) {$\bullet$};
\node[ font=\tiny] at (0,2) {$\bullet$};
\node[font=\tiny] at (2,2) {$\bullet$};
\node[font=\tiny] at (1,1) {$\bullet$};
\node[right, font=\tiny] at (1,0.5) {$-\mathbf{H}$};
\end{tikzpicture}
\begin{tikzpicture}
\draw (0,2) -- (1,1);
\draw (2,2) -- (1,1);
\draw (1,1) -- (1,0);
\node[below] at (1,0) {$v_{T_\star}$};
\node[above, font=\tiny] at (0,2) {$\bar{\Pi}^{(1)}$};
\node[above, font=\tiny] at (2,2) {$\bar{\Pi}^{(1)}$};
\node[right, font=\tiny] at (1,1) {$\lbrace \bar{\Pi}^{(1)},\bar{\Pi}^{(1)}\rbrace_\star $};
\node[font=\tiny] at (1,0) {$\bullet$};
\node[ font=\tiny] at (0,2) {$\bullet$};
\node[font=\tiny] at (2,2) {$\bullet$};
\node[font=\tiny] at (1,1) {$\bullet$};
\node[right, font=\tiny] at (1,0.5) {$-\mathbf{H}$};
\end{tikzpicture}
\caption{$2$-trees labeled ribbon trees, which contribute to the solution $\Phi$.}\label{fig:2trees}
\end{figure}
Hence 
\begin{equation*}
\begin{split}
\Phi^{(2)}&=\bar{\Pi}_1^{(2)}+\bar{\Pi}_2^{(2)}-\frac{1}{2}\mathbf{H}\left(\lbrace \bar{\Pi}^{(1)},\bar{\Pi}^{(1)}\rbrace_\natural+\lbrace \bar{\Pi}^{(1)},\bar{\Pi}^{(1)}\rbrace_\sharp+\lbrace \bar{\Pi}^{(1)},\bar{\Pi}^{(1)}\rbrace_\flat+\lbrace \bar{\Pi}^{(1)},\bar{\Pi}^{(1)}\rbrace_\star \right)\\
&=\Phi_{(1,0)}^{(2)}+\Phi_{(0,1)}^{(2)}+\\
&-\Big(\left(a_{1,k_1}A_{1,k_2}\langle n_1,k_2m_2\rangle- a_{1,k_2}A_{1,k_1}\langle n_2,k_1m_1\rangle+[A_{1,k_1},A_{1,k_2}]\right), a_{1,k_1} a_{1,k_2}\partial_{\langle k_2m_2,n_1\rangle n_2-\langle k_1m_1,n_2\rangle n_1}\Big)\cdot\\
&\cdot\mathbf{H}(\delta_{m_1}\wedge \delta_{m_2}\mathfrak{w}^{k_1m_1+k_2m_2})\\
&+\Big(a_{1,k_2}A_{1,k_1}, -a_{1,k_2}a_{1,k_1}\partial_{n_1}\Big)\mathbf{H}\left(i\hbar n_2^q\frac{\partial \delta_{m_1}}{\partial x_q}\wedge \delta_{m_2}\mathfrak{w}^{k_1m_1+k_2m_2}\right)\\
&+\Big(a_{1,k_1}A_{1,k_2}, a_{1,k_1}a_{1,k_2}\partial_{n_2}\Big)\mathbf{H}\left(i\hbar n_1^q\frac{\partial \delta_{m_2}}{\partial x_q}\wedge \delta_{m_1}\mathfrak{w}^{k_1m_1+k_2m_2}\right)\\
&+i\hbar\Big(a_{1,k_1}A_{1,k_2}n_1^q\mathbf{H}\left(A_q(\phi)\mathfrak{w}^{k_1m_1+m_2}\delta_{m_1}\wedge \delta_{m_2}\right)-a_{1,k_2} A_{1}n_2^q\mathbf{H}\left(A_q(\phi)\mathfrak{w}^{k_1m_1+k_2m_2}\delta_{m_1}\wedge \delta_{m_2}\right) ,0\Big)
\end{split}
\end{equation*}
By Remark \ref{rmk:H(delta wedge delta)}
\[\mathbf{H}(\delta_{m_1}\wedge \delta_{m_2}\mathfrak{w}^{k_1m_1+k_2m_2})=f(\hbar,u_{m_{a}})\mathfrak{w}^{lm_a}\delta_{m_a}\] and $f(\hbar,u_{m_{a}})\in O_{loc}(1)$, for $k_1m_1+k_2m_2=lm_a$. Analogously \[\mathbf{H}\left(A_q(\phi)\mathfrak{w}^{k_1m_1+k_2m_2}\delta_{m_1}\wedge \delta_{m_2}\right)=f(\hbar,A(\phi),u_{m_{a}})\mathfrak{w}^{lm_a}\delta_{m_a}\] and $f(\hbar,A(\phi),u_{m_{a}})\in O_{loc}(1)$. Then \[\mathbf{H}\left(\hbar\frac{\partial \delta_{m_1}}{\partial x_q}\wedge \delta_{m_2}\mathfrak{w}^{k_1m_1+k_2m_2}\right)=\mathfrak{w}^{lm_a} f(\hbar,u_{m_{a}})\delta_{m_a}\] and $f(\hbar,u_{m_{a}})\in O_{loc}(\hbar^{1/2})$. This shows that every term in the sum above, is a function of some order in $\hbar$ times a delta supported along a ray of slope $m_{(a_1,a_2)}=a_1m_1+a_2m_2$. For any given $a\in\left(\Z^2_{\geq 0}\right)_{\text{prim}}$, these contributions are by definition $\Phi_{(a_1,a_2)}^{(2)}$, hence 
\[\Phi^{(2)}=\Phi_{(1,0)}^{(2)}+\sum_{a\in\left(\Z^2_{\geq0}\right)_{\text{prim}}}\Phi_{(a_1,a_2)}^{(2)}+\Phi_{(0,1)}^{(2)}.\]
In general, the expression of $\Phi_a$ will be much more complicated, but as a consequence of the definition of $\mathbf{H}$ it always contains a delta supported on a ray of slope $m_a$.

  
\subsection{From solution of Maurer-Cartan to the saturated scattering diagram $\mathfrak{D}_{\infty}$}

Let us first introduce the following notation:
\begin{notation}
Let $A\defeq U\setminus\lbrace m_0\rbrace$ be an annulus and let $\tilde{A}$ be the universal cover of $A$ with projection $\varpi\colon\tilde{A}\to A$. Then let us denote by $\tilde{\Phi}$ the pullback of $\Phi$ by $\varpi$, in particular by the decomposition in its primitive Fourier mode $\tilde{\Phi}=\sum_{a\in\left(\Z^2_{\geq 0}\right)}\varpi^*(\Phi_a)\defeq\sum_{a\in\left(\Z^2_{\geq 0}\right)}\tilde{\Phi}_a$. 
\end{notation}
\begin{notation}
We introduce polar coordinates in $m_0$, centred in $m_0=P_{m_1}\cup P_{m_2}$, denoted by $(r, \vartheta)$ and we fix a reference angle $\vartheta_0$ such that the ray with slope $\vartheta_0$ trough $m_0$ contains the base point $q_0$ (see Figure \ref{fig:rif_theta}). 

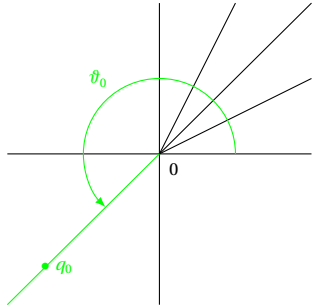
\begin{figure}[h]
\begin{tikzpicture}
\draw (2,0) -- (2,4);
\draw (0,2) -- (4,2);
\draw (2,2) -- (4,4) ;
\draw (2,2) -- (3,4) ;
\draw (2,2) -- (4,3) ;
\draw [green] (2,2) -- (0,0);
\node [below right, font=\tiny] at (2,2) {0};
\node[green, right, font=\tiny] at (0.5,0.5) {$q_0$};
\node[ green, font=\tiny] at (0.5,0.5) {$\bullet$};
\draw [-latex, green] (3,2) arc (0:225:1);
\node[green,font=\tiny] at (1.2,3) {$\vartheta_0$}; 
\end{tikzpicture}
\caption{The reference angle $\vartheta_0$.}
\label{fig:rif_theta}
\end{figure}

Then for every $a\in\left(\Z_{\geq0}^2\right)_{\text{prim}}$ we associate to the ray $P_{m_a}\defeq m_0+\R_{\geq 0}m_a$ an angle $\vartheta_a\in\left(\vartheta_0,\vartheta_0+2\pi\right)$. We identify $P_{m_a}\cap A$ with its lifting in $\tilde{A}$ and by abuse of notation we will denote it by $P_{m_a}$. We finally define $\tilde{A}_0\defeq\lbrace (r, \vartheta)\vert \vartheta_0-\epsilon_0<\vartheta <\vartheta_0+2\pi\rbrace$, for some small positive $\epsilon_0$.  
\end{notation}
   
\begin{lemma}[see Lemma 5.40 \cite{MCscattering}]\label{lem:Phi_aMC}
Let $\Phi$ be a solution of equation \eqref{eq:Phi} which has been decomposed as a sum over primitive Fourier mode, as in \eqref{eq:Phi_primitive}. Then for any $a\in\big(\Z_{\geq 0}^2\big)_{\text{prim}}$, $\tilde{\Phi}_{a}$ is a solution of the Maurer-Cartan equation in $\tilde{A}_0$, up to higher order in $\hbar$, namely $\lbrace\tilde{\Phi}_{a},\tilde{\Phi}_{a'}\rbrace_\sim\in\mathcal{W}_2^{-\infty}(\tilde{A}_0,\End E\oplus TM)\mathfrak{w}^{km_a+k'm_{a'}}$ and $d_W\tilde{\Phi}_{a}\in\mathcal{W}_{2}^{-\infty}(\tilde{A}_0,\End E\oplus TM)\mathfrak{w}^{km_a}$ for some $k,k'\geq 1$.
\end{lemma}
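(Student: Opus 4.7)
The plan is to exploit the fact that each $\tilde{\Phi}_a$, being built from labeled ribbon trees whose outgoing edge carries a Fourier mode that is a positive multiple of $m_a$, is asymptotically supported on the single ray $P_{m_a}$ inside the annular region $\tilde{A}_0$, and that in $\tilde{A}_0$ the rays $P_{m_b}$ with distinct primitive $b$ are pairwise disjoint. The heart of the proof is a support claim, after which the two estimates of the lemma follow by separating the Maurer--Cartan equation according to primitive Fourier modes.

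First I would prove by induction on the number of internal vertices of the trees appearing in \eqref{def:Phitree} that every contribution to $\tilde{\Phi}_a$ is of the form $f(\hbar,u_{m_a})\,\delta_{m_a}$ (up to an $\End E$ or $TM$ tensor factor), and in particular lies in $\mathcal{W}_{P_{m_a}}^{1}(\tilde{A}_0,\End E\oplus TM)\mathfrak{w}^{\ell m_a}$. The base case $a\in\lbrace(1,0),(0,1)\rbrace$ is the single wall construction of Section \ref{sec:single wall}. For the inductive step at a trivalent vertex receiving inputs of profiles $\delta_{m_b}$ and $\delta_{m_{b'}}$, each of the four labels $\natural,\flat,\sharp,\star$ of Definition \ref{label} produces a $2$-form proportional to $\delta_{m_b}\wedge\delta_{m_{b'}}$, modulo extra factors of $\hbar^{1/2}$ or $\hbar$ coming from the derivative terms of $\flat,\sharp,\star$. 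Applying $-\mathbf{H}$ in the coordinates $(u_{m_a},u_{m_a^\perp})$ adapted to the outgoing mode $\ell m_a=k_bm_b+k_{b'}m_{b'}$, as in Remark \ref{rmk:H(delta wedge delta)}, then yields a $1$-form $g(\hbar,u_{m_a})\delta_{m_a}$ and closes the induction.

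Next I would use the support claim to conclude both estimates. For $a\neq a'$ the rays $P_{m_a}$ and $P_{m_{a'}}$ meet only at $m_0\notin\tilde{A}_0$; hence at every point of $\tilde{A}_0$ at least one of $\tilde{\Phi}_a,\tilde{\Phi}_{a'}$ is exponentially small. Combined with the derivative bounds in the definition of $\mathcal{W}_{P_m}^1$ and with the componentwise analysis of Lemma \ref{lem:bracket}, each of the four labelled pieces of $\lbrace\tilde{\Phi}_a,\tilde{\Phi}_{a'}\rbrace_\sim$ lies in $\mathcal{W}_2^{-\infty}(\tilde{A}_0,\End E\oplus TM)\mathfrak{w}^{km_a+k'm_{a'}}$ for all $k,k'\geq 1$. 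For $a=a'$, every summand of $\tilde{\Phi}_a$ is proportional to $du_{m_a^\perp}$, and every term in the four labels of Definition \ref{label} contains a wedge $dx^J\wedge dx^K$ of the two input indices; since $du_{m_a^\perp}\wedge du_{m_a^\perp}=0$, we obtain $\lbrace\tilde{\Phi}_a,\tilde{\Phi}_a\rbrace_\sim=0$ identically. Finally, by Proposition \ref{prop:Kuranishi} the form $\tilde{\Phi}$ satisfies $d_W\tilde{\Phi}+\tfrac12\lbrace\tilde{\Phi},\tilde{\Phi}\rbrace_\sim=0$ on $\tilde{A}_0$. Projecting onto the Fourier modes $\Z_{\geq 1}m_a$ yields
\[
d_W\tilde{\Phi}_a\,=\,-\tfrac12\sum_{(b,b')}\bigl(\lbrace\tilde{\Phi}_b,\tilde{\Phi}_{b'}\rbrace_\sim\bigr)^{(a)},
\]
where each summand is either the vanishing self bracket or a cross bracket in $\mathcal{W}_2^{-\infty}$; hence $d_W\tilde{\Phi}_a\in\mathcal{W}_2^{-\infty}$ as well.

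The main obstacle is the inductive support claim: the homotopy $\mathbf{H}$ is defined by integrations along affine axes adapted to the basepoint $q_0$, not to the ray $P_{m_a}$, so the explicit coordinate change $(u_{m_1^\perp},u_{m_2^\perp})\mapsto(u_{m_a},u_{m_a^\perp})$ used in Remark \ref{rmk:H(delta wedge delta)} must be checked in a form compatible with iteration through nested trees, so that the output of $\mathbf{H}$ is genuinely of the $\delta_{m_a}$ type and not merely a polynomially growing form in $\mathcal{W}_0^\infty$ without preferred ray. Once this compatibility is granted, the quantitative asymptotic estimate placing $\tilde{\Phi}_a$ in $\mathcal{W}_{P_{m_a}}^{1}$ follows from iterated applications of Lemmas \ref{lem:H(W^s)} and \ref{lem:wedge_sobolev}.
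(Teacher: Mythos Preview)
Your proposal is correct and follows essentially the same line as the paper's proof: establish that each $\tilde{\Phi}_a$ is supported on $P_{m_a}$ with a $\delta_{m_a}$ profile, use disjointness of the rays in $\tilde{A}_0$ to put the cross brackets in $\mathcal{W}_2^{-\infty}$, observe that the self bracket vanishes since every summand is proportional to $du_{m_a^\perp}$, and then read off the $d_W\tilde{\Phi}_a$ estimate from the Maurer--Cartan equation for $\tilde{\Phi}$. The paper's proof is terser --- it asserts the $\delta_{m_a}$ profile as a consequence of the tree construction and Remark~\ref{rmk:H(delta wedge delta)} rather than spelling out the induction --- but the content is the same.

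Your stated ``main obstacle'' is not actually an obstacle: by Definition~\ref{def:homotopyH} the operator $\mathbf{H}=\bigoplus_m\mathbf{H}_m$ acts on the $\mathfrak{w}^m$-component using the coordinates $(u_m,u_{m^\perp})$ adapted to the outgoing Fourier mode $m$, so that on a $2$-form $\mathbf{H}_m$ integrates along $u_m$ and outputs exactly a $du_{m^\perp}$-form. Thus Remark~\ref{rmk:H(delta wedge delta)} applies verbatim at every internal vertex of the tree, with $m=\ell m_a$ the label of the outgoing edge, and the inductive support claim goes through without any additional compatibility check.
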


\begin{proof}
Recall that $\Phi$ is a solution of Maurer-Cartan $d_W\Phi+\lbrace\Phi,\Phi\rbrace_\sim=0$, hence its pullback $\varpi^*(\Phi)$ is such that $\sum_{a\in\big(\Z_{\geq0}^2\big)_{\text{prim}}}d_W\tilde{\Phi}_{a}+\sum_{a,a'\in\big(\Z_{\geq0}^2\big)_{\text{prim}}}\lbrace\tilde{\Phi}_{a},\tilde{\Phi}_{a'}\rbrace_\sim=0$. 
Looking at the bracket there are two possibilities: first of all, if $a\neq a'$ then $\lbrace\tilde{\Phi}_{a},\tilde{\Phi}_{a'}\rbrace_\sim$  is proportional to $\delta_{m_{a}}\wedge\delta_{m_{a'}}\mathfrak{w}^{km+k'm'}$. Since $P_{m_a}\cap P_{m{_a'}}\cap \tilde{A}=\varnothing$ then $\delta_{m_{a}}\wedge\delta_{m_{a'}}\in\mathcal{W}_2^{-\infty}(\tilde{A}_0)$, indeed writing $\delta_{m_{a}}\wedge\delta_{m_{a'}}$ in polar coordinates it is a $2$-form with coefficient a 
Gaussian function in two variables centred in $m_0\not\in\tilde{A}_0$. Hence it is bounded by $e^{-\frac{c_V}{\hbar}}$ in the open subset $V\subset\tilde{A}_0$.     
Secondly, if $a=a'$ then by definition $\lbrace\tilde{\Phi}_{a},\tilde{\Phi}_{a}\rbrace_\sim=0$. Finally, by the fact that $d_W\tilde{\Phi}_a=-\sum_{a',a''}\lbrace\tilde{\Phi}_{a'},\tilde{\Phi}_{a''}\rbrace_\sim$ it follows that $d_W\tilde{\Phi}_{a}\in\mathcal{W}_{2}^{-\infty}(\tilde{A}_0)\mathfrak{w}^{km_a}$ for some $k\geq 1$. 
\end{proof}


Now recall that the homotopy operator we have defined in Section \ref{sec:single wall} gives a gauge fixing condition, hence for every $a\in\left(\Z^2_{\geq 0}\right)_{\text{prim}}$ there exists a unique gauge $\varphi_a$ such that $e^{\varphi_a}\ast 0=\tilde{\Phi}_a$ and $p(\varphi_a)=0$. To be more precise we should consider $\tilde{p}\defeq\varpi^*( p)$ as gauge fixing condition and similarly $\tilde{\iota}\defeq\varpi^*(\iota)$ and $\tilde{H}\defeq\varpi^*(H)$ as homotopy operator, however if we consider affine coordinates on $\tilde{A}$, these operators are equal to $p$, $\iota$ and $H$ respectively. In addition in affine coordinates on $\tilde{A}$ the solution $\tilde{\Phi}_a$ is also equal to $\Phi_a$. Therefore in the following computations we will always use the original operators and the affine coordinates on $\tilde{A}$. We compute the asymptotic behaviour of the gauge $\varphi_a$ in the following theorem:

\begin{theorem}\label{thm:asymptotic_gauge}
Let $\varphi_a\in \Omega^0(\tilde{A}_0,\End E\oplus TM)$ be the unique gauge such that $p(\varphi_a)=0$ and $e^{\varphi_a}\ast 0=\tilde{\Phi}_{a}$. Then the asymptotic behaviour of $\varphi_a$ is 
\[
\varphi_a^{(s)}\in\begin{cases}\sum_{l}
\left(B_l, b_l\partial_{n_a}\right)t^{s}\mathfrak{w}^{l m_a}+\bigoplus_{l\geq 1}\mathcal{W}_0^{0}(\tilde{A}_0,\End E\oplus TM)\mathfrak{w}^{lm_a} & \text{on }  H_{m_a,+}\\
\bigoplus_{l\geq 1}\mathcal{W}_0^{-\infty}(\tilde{A}_0,\End E\oplus TM)\mathfrak{w}^{lm_a} & \text{on }  H_{m_a,-}
\end{cases}\]
for every $s\geq 0$, where $\big(B_l, b_l\partial_{n_a}\big)\mathfrak{w}^{lm_a}\in\tilde{\mathfrak{h}}$.
\end{theorem}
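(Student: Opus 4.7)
The plan is to mimic the proof of Proposition \ref{prop:asymp1}, treating $\tilde{\Phi}_a$ as a ``Maurer--Cartan datum supported along the ray $P_{m_a}$'', and to run the same gauge-fixing / induction argument but on the universal cover $\tilde{A}_0$, with the homotopy operator $H$ of Section \ref{sec:single wall} (whose base point $q_0$ lies in $H_{m_a,-}$ for every $a\in\left(\Z^2_{\geq 0}\right)_{\mathrm{prim}}$ by the choice of reference angle $\vartheta_0$). The key enabling fact is Lemma \ref{lem:Phi_aMC}, which tells us that $\tilde{\Phi}_a$ is Maurer--Cartan modulo exponentially small terms, so the gauge equation
\[
\varphi_a=-H\tilde{\Phi}_a-H\!\left(\sum_{k\geq 1}\frac{\ad_{\varphi_a}^k}{(k+1)!}\,d_W\varphi_a\right)
\]
admits a unique solution with $p(\varphi_a)=0$ by the argument of Lemma \ref{lem:uniq}.

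The first step is to verify that
\[
\tilde{\Phi}_a\in\bigoplus_{l\geq 1}\mathcal{W}_{P_{m_a}}^{1}(\tilde{A}_0,\End E\oplus TM)\,\mathfrak{w}^{l m_a}.
\]
This is proved by induction on the number of leaves of the ribbon trees appearing in the decomposition \eqref{def:Phitree}: each internal bracket of two Gaussians $\delta_{m_b}\wedge\delta_{m_{b'}}$ with $b+b'$ proportional to $a$ becomes, after applying $\mathbf{H}$, a Gaussian $\delta_{m_a}$ multiplied by a coefficient that is locally $O(1)$ in $\hbar$ (by Remark \ref{rmk:H(delta wedge delta)}); the labels $\flat,\sharp,\star$ only produce extra factors of $\hbar$, which are harmless. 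Combined with Lemmas \ref{lem:wedge_sobolev}--\ref{lem:bracket}, this shows that every summand of $\tilde{\Phi}_a$ sits in $\mathcal{W}_{P_{m_a}}^{1}$.

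The second step is the induction on the order $s$ in $t$, identical in structure to the one in the proof of Proposition \ref{prop:asymp1}. At each order we write $\varphi_a^{(s+1)}=-H(\tilde{\Phi}_a^{(s+1)})-H\bigl((\sum_{k\geq 1}\tfrac{1}{(k+1)!}\ad_{\varphi_a^s}^k d_W\varphi_a^s)^{(s+1)}\bigr)$, assume inductively that $\varphi_a^{(\leq s)}$ has the claimed form, and invoke Lemma \ref{lem:bracket} to show that all iterated brackets of $\varphi_a^s$ with $d_W\varphi_a^s$ land in $\mathcal{W}_{P_{m_a}}^{0}$ (the exponentially small contributions from Lemma \ref{lem:Phi_aMC} are absorbed into $\mathcal{W}_2^{-\infty}$). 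Then Lemma \ref{lem:H(W^s)} promotes the correction to $\mathcal{W}_0^{-1}\subset\mathcal{W}_0^{0}$, so the leading asymptotic of $\varphi_a^{(s+1)}$ is dictated entirely by $-H(\tilde{\Phi}_a^{(s+1)})$. By the explicit form of $H$ and the Gaussian profile of $\tilde{\Phi}_a^{(s+1)}$, this gives on $H_{m_a,+}$ a sum $\sum_l (B_l,b_l\partial_{n_a})t^{s+1}\mathfrak{w}^{lm_a}$ plus a $\mathcal{W}_0^{0}$ remainder, while on $H_{m_a,-}$ the homotopy integral stays in the Gaussian tail and therefore produces only $\mathcal{W}_0^{-\infty}$ terms. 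Finally, one reads off from the ribbon-tree combinatorics and the computation in \ref{subsec:relation} that the constant coefficients $(B_l,b_l\partial_{n_a})$ belong to $\tilde{\mathfrak{h}}$: the label $\natural$ reproduces the bracket $[\cdot,\cdot]_{\sim}$ of $\tilde{\mathfrak{h}}$ in the limit $\hbar\to 0$, while $\flat,\sharp,\star$ vanish in that limit.

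The main obstacle I foresee is the first step, i.e.\ the uniform $\mathcal{W}_{P_{m_a}}^{1}$ estimate on every summand of $\tilde{\Phi}_a$: one has to track carefully how the new operator $\mathbf{H}$ (which integrates along two consecutive coordinate segments adapted to the outgoing ray of each ribbon-tree vertex) interacts with the Gaussian factors coming from walls of various slopes, and show that the resulting one-form has the asymptotic support along exactly $P_{m_a}$ with coefficient bounds of the right order in $\hbar$. The secondary technical point is to check that the two-sided behaviour (the $\mathcal{W}_0^{-\infty}$ decay on $H_{m_a,-}$) is preserved after applying the BCH series; this is the analogue of the jump phenomenon in Proposition \ref{prop:asymp1} and follows because both $H$ and $\mathbf{H}$ use $q_0\in H_{m_a,-}$ as their base point, so on the negative half-plane every Gaussian encountered along the integration path is evaluated away from its centre.
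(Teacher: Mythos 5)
Your proposal follows essentially the same route as the paper's proof: you set up the gauge equation with the single-wall homotopy $H$, reduce the leading asymptotics to $-H(\tilde{\Phi}_a)$ by invoking Lemmas \ref{lem:bracket} and \ref{lem:H(W^s)} on the BCH tail, use the ribbon-tree decomposition to separate the $\natural$-only trees (which survive as $\hbar\to 0$) from the $\flat,\sharp,\star$ contributions (which gain extra powers of $\hbar$), and conclude that the leading coefficient is the $[\cdot,\cdot]_{\tilde{\mathfrak{h}}}$ bracket of the input data. Your preliminary step (stating $\tilde{\Phi}_a\in\bigoplus_l\mathcal{W}_{P_{m_a}}^1\mathfrak{w}^{lm_a}$ up front) is a mild reorganization of what the paper does via the explicit $H(\nu_{T_\bullet})$ computations, and your note on absorbing the $\mathcal{W}_2^{-\infty}$ errors from Lemma \ref{lem:Phi_aMC} is a point the paper leaves implicit.
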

\begin{remark}
Notice that, from Theorem \ref{thm:asymptotic_gauge} the gauge $\varphi_a$ is asymptotically an element of the dgLa $\tilde{\mathfrak{h}}$. Hence the saturated scattering diagram (see Definition \ref{def:D_infty}) is strictly contained in the mirror dgLa $G$ (see Definition \ref{def:mirror dgLa}). 
\end{remark}
We first need the following lemma (for a proof see Lemma 5.27 \cite{MCscattering}
) which gives the explicit asymptotic behaviour of each component of the Lie bracket $\lbrace\cdot,\cdot\rbrace_\sim$:
\begin{lemma}\label{lem:bracket natural}
Let $P_m$ and $P_{m'}$ be two rays on $U$ such that $P_{m_a}\cap P_{m_{a'}}=\lbrace m_0\rbrace$. 

If $(A\mathfrak{w}^{m},\alpha\mathfrak{w}^{m}\partial_n)\in\mathcal{W}_{0}^s(U,\End E\oplus TM)\mathfrak{w}^{m}$ and $(B\mathfrak{w}^{m'},\beta\mathfrak{w}^{m'}\partial_{n'})\in\mathcal{W}_{0}^r(U,\End E\oplus TM)\mathfrak{w}^{m'}$, then 
\begin{equation*}
\begin{split}
&\mathbf{H}\left(\lbrace (A\delta_m\mathfrak{w}^{m},\alpha\delta_m\mathfrak{w}^{m}\partial_n), (B\delta_{m'}\mathfrak{w}^{m'},\beta\delta_{m'}\mathfrak{w}^{m'}\partial_{n'})\rbrace_{\natural}\right)\in\mathcal{W}_{P_{m+m'}}^{s+r+1}(U,\End E\oplus TM)\mathfrak{w}^{m+m'}\\
&\mathbf{H}\left(\lbrace (\delta_m\mathfrak{w}^{m},\alpha\delta_m\mathfrak{w}^{m}\partial_n), (B\delta_{m'}\mathfrak{w}^{m'},\beta\delta_{m'}\mathfrak{w}^{m'}\partial_{n'})\rbrace_{\flat}\right)\in\mathcal{W}_{P_{m+m'}}^{s+r}(U,\End E\oplus TM)\mathfrak{w}^{m+m'}\\
&\mathbf{H}\left(\lbrace (A\delta_m\mathfrak{w}^{m},\alpha\delta_m\mathfrak{w}^{m}\partial_n), (B\delta_{m'}\mathfrak{w}^{m'},\beta\delta_{m'}\mathfrak{w}^{m'}\partial_{n'})\rbrace_{\sharp}\right)\in\mathcal{W}_{P_{m+m'}}^{s+r}(U,\End E\oplus TM)\mathfrak{w}^{m+m'}\\
&\mathbf{H}\left(\lbrace (A\delta_m\mathfrak{w}^{m},\alpha\delta_m\mathfrak{w}^{m}\partial_n), (B\delta_{m'}\mathfrak{w}^{m'},\beta\delta_{m'}\mathfrak{w}^{m'}\partial_{n'})\rbrace_{\star}\right)\in\mathcal{W}_{P_{m+m'}}^{s+r-1}(U,\End E\oplus TM)\mathfrak{w}^{m+m'}.
\end{split}
\end{equation*}
\end{lemma}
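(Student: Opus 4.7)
The plan is to treat each of the four labeled brackets $\natural$, $\flat$, $\sharp$, $\star$ separately, following a common scheme. In each case the bracket of the two inputs produces, up to an exponentially small remainder off $\{m_0\}=P_m\cap P_{m'}$ (which lies in $\mathcal{W}_2^{-\infty}$), a 2-form of shape $g(x,\hbar)\,\delta_m\wedge\delta_{m'}\,\mathfrak{w}^{m+m'}$. By Remark \ref{rmk:H(delta wedge delta)} and Fubini,
\[
\mathbf{H}\!\left(g\,\delta_m\wedge\delta_{m'}\,\mathfrak{w}^{m+m'}\right)=\Big(\int_0^{u_{m_a}} g(s,u_{m_a^\perp})\,\tfrac{e^{-s^2/((a_1^2+a_2^2)\hbar)}}{\sqrt{\pi\hbar}}\,ds\Big)\,\delta_{m_a}\,\mathfrak{w}^{m+m'}
\]
(up to a constant), a 1-form supported along $P_{m+m'}$. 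If $g\in\mathcal{W}_0^t$, then distributing $\nabla^j$ by Leibniz between $g$ and the Gaussian and invoking the bound $\int u_{m_a^\perp}^\beta\sup|\nabla^{j_2}\delta_{m_a}|\,du_{m_a^\perp}\le C\hbar^{-(j_2-\beta)/2}$ used to prove $\delta_{m_a}\in\mathcal{W}_{P_{m_a}}^1$, I obtain the master estimate
\[
\int u_{m_a^\perp}^\beta\sup_{u_{m_a}}\!\big|\nabla^j \mathbf{H}(g\,\delta_m\wedge\delta_{m'}\,\mathfrak{w}^{m+m'})\big|\,du_{m_a^\perp}\le C\hbar^{-(t+j-\beta)/2},
\]
which is precisely the $\mathcal{W}_{P_{m+m'}}^{t+1}$-condition. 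So it suffices, in each case, to identify the $\mathcal{W}_0$-order of $g$.

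For the $\natural$-label, each of the four summands of $\{\cdot,\cdot\}_\natural$ gives $g$ equal to a constant pairing or matrix commutator times a product of one factor from $\{A,\alpha\}$ and one from $\{B,\beta\}$, so $g\in\mathcal{W}_0^{s+r}$ and the result lies in $\mathcal{W}_{P_{m+m'}}^{s+r+1}$. For the $\flat$ and $\sharp$ labels, the extra factor is $i\hbar\,\partial/\partial x_q$: if the derivative hits one of the coefficients $A,B,\alpha,\beta$, its $\mathcal{W}_0$-order raises by one while $i\hbar$ lowers it by two, yielding $g\in\mathcal{W}_0^{s+r-1}$; if instead it hits $\delta_m$ (or $\delta_{m'}$), using $\partial_{u_{m^\perp}}\delta_m=-(2u_{m^\perp}/\hbar)\delta_m$ and expressing $u_{m^\perp}$ as a linear combination of $u_{m_a},u_{m_a^\perp}$, the master-formula integral picks up a linear factor times the Gaussian in $s$; integration by parts then reduces this to $O_{loc}(\sqrt{\hbar})\,\delta_{m_a}$, again matching $g\in\mathcal{W}_0^{s+r-1}$. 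Either way the result lies in $\mathcal{W}_{P_{m+m'}}^{s+r}$. For the $\star$-label, an $i\hbar$ prefactor appears with no derivative, so $g\in\mathcal{W}_0^{s+r-2}$ directly, and the result lies in $\mathcal{W}_{P_{m+m'}}^{s+r-1}$.

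The main technical point is the $\flat$/$\sharp$ sub-case when the derivative falls on the Gaussian $\delta_m$: the pointwise factor $u_{m^\perp}/\hbar$ is singular in $\hbar$, and the gain of $\sqrt{\hbar}$ materializes only after the $\mathbf{H}$-integration. Verifying that the integration by parts in $u_{m_a}$, combined with the boundary terms from the finite range and the Gaussian in $u_{m_a^\perp}$, indeed yields the claimed $O_{loc}(\sqrt{\hbar})$ factor (rather than losing a $\hbar^{-1/2}$) will be the main bookkeeping hurdle. The off-support exponential decay required to place the remainders in $\mathcal{W}_2^{-\infty}$ is automatic from the fact that $\delta_m\wedge\delta_{m'}$ is a 2D Gaussian centered at $m_0\in P_{m+m'}$, so its $\mathbf{H}$-image decays exponentially away from $P_{m+m'}$.
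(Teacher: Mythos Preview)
The paper does not give its own proof of this lemma: it simply refers the reader to Lemma~5.27 in \cite{MCscattering}. Your outline is correct and follows essentially the strategy of that cited source, adapted to the extended bracket $\{\cdot,\cdot\}_\sim$. The reduction to a ``master estimate''---that $g\in\mathcal{W}_0^t$ implies $\mathbf{H}(g\,\delta_m\wedge\delta_{m'}\,\mathfrak{w}^{m+m'})\in\mathcal{W}_{P_{m+m'}}^{t+1}$---is exactly the right organizing principle: the prefactor $F=\int_0^{u_{m_a}} g\,\tfrac{e^{-s^2/(c\hbar)}}{\sqrt{\pi\hbar}}\,ds$ lies in $\mathcal{W}_0^t$ (each $u_{m_a}$-derivative costs $\hbar^{-1/2}$ either from $g$ or from the boundary evaluation of the Gaussian), and then Lemma~\ref{lem:wedge_sobolev} gives $F\cdot\delta_{m_a}\in\mathcal{W}_{P_{m_a}}^{t+1}$.

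One small clarification on the $\flat/\sharp$ sub-case where the derivative lands on $\delta_m$: your phrase ``again matching $g\in\mathcal{W}_0^{s+r-1}$'' is slightly loose, since after the $i\hbar$ cancels the $\hbar^{-1}$ you are left with $g'=\beta A\,u_{m^\perp}\in\mathcal{W}_0^{s+r}$, not $\mathcal{W}_0^{s+r-1}$. The extra $\sqrt{\hbar}$ gain you need does not come from $g'$ itself but, as you correctly identify, from the subsequent $\mathbf{H}$-integration: writing $u_{m^\perp}=c_1 u_{m_a}+c_2 u_{m_a^\perp}$, the $c_1 u_{m_a}$ part yields $\int_0^{u_{m_a}} s\,\tfrac{e^{-s^2/(c\hbar)}}{\sqrt{\pi\hbar}}\,ds=O_{\mathrm{loc}}(\sqrt{\hbar})$, while the $c_2 u_{m_a^\perp}$ part multiplies $\delta_{m_a}$ by $u_{m_a^\perp}$, which effectively shifts $\beta\mapsto\beta+1$ in the defining estimate \eqref{eq:stima W^s} and hence lowers the order by one. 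Both routes land in $\mathcal{W}_{P_{m+m'}}^{s+r}$ as claimed. So the argument is sound; just be careful not to attribute the gain to the pointwise order of $g'$.
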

\begin{remark}
The homotopy operators $H$ and $\mathbf{H}$ are different. However it is not a problem because the operator $H$ produce a solution of Maurer-Cartan and not of equation \eqref{eq:Phi}.\footnote{Recall that we were looking for a solution $\Phi$ of Maurer-Cartan of the form $\Phi=\bar{\Pi}+\Xi$ and since $d_W(\bar{\Pi})=0$, the correction term $\Xi$ is a solution of $d_W\Xi=-\frac{1}{2}\lbrace\Phi,\Phi\rbrace_\sim$. At this point we have introduced the homotopy operator $\mathbf{H}$ in order to compute $\Xi$ and we got $\Xi=-\frac{1}{2}\mathbf{H}(\lbrace\Phi,\Phi\rbrace_\sim)$.}     
\end{remark}
\begin{proof}{[Theorem \ref{thm:asymptotic_gauge}]}
First of all recall that for every $s\geq 0$
\begin{equation}
\varphi_a^{(s+1)}=-H\left(\tilde{\Phi}_a+\sum_{k\geq 1}\frac{\mathsf{ad}^k_{\varphi_a^s}}{k!}d_W\varphi_a^s\right)^{(s+1)}
\end{equation}
where $H$ is the homotopy operator defined in \eqref{def:homotopy_1wall} with the same choice of the path $\varrho$ as in \eqref{varro}. In addition as in the proof of Proposition \ref{prop:asymp1},
\begin{equation}
-H\left(\sum_{k\geq 1}\frac{\mathsf{ad}_{\varphi^s_a}^k}{k!}d_W\varphi_a^s\right)^{(s+1)}\in\bigoplus_{l\geq 1}\mathcal{W}_{0}^0(\tilde{A}_0,\End E\oplus TM)\mathfrak{w}^{lm_a}
\end{equation}
hence we are left to study the asymptotic of $H(\tilde{\Phi}_a^{(s+1)})$. By definition $\tilde{\Phi}_a^{(s+1)}$ is the sum over all $k\leq s$-trees such that they have outgoing vertex with label $m_T=lm_a$ for some $l\geq 1$. 

We claim the following:
\begin{equation}
H(\tilde{\Phi}_a^{(s+1)})\in H(\nu_{T_\natural})+\bigoplus_{l\geq 1}\mathcal{W}_{0}^r(\tilde{A}_0,\End E\oplus TM)\mathfrak{w}^{lm_a}
\end{equation}
for every $T\in\mathbb{LRT}_{k,0}$ such that $\mathfrak{t}_{k,T}(\bar{\Pi},\cdots ,\bar{\Pi})$ has Fourier mode $m_T=lm_a$, for every $k\leq s$ and for some $r\leq 0$. Indeed if $k=1$ the tree has only one root and there is nothing to prove because there is no label. In particular 
\[H(\nu_{T})=H(\bar{\Pi}^{(s+1)})=H(\tilde{\Phi}_{(1,0)}^{(s+1)}+\tilde{\Phi}_{(0,1)}^{(s+1)})\]
and we will explicitly compute it below. Then at $k\geq 2$, every tree can be considered as a $2$-tree where the incoming edges are the roots of two sub-trees $T_1$ and $T_2$, not necessary in $\mathbb{LRT}_{0}$, such that their outgoing vertices look like 
\[
\begin{split}
\nu_{T_1}&=(A_k\delta_{m_{a'}}\mathfrak{w}^{km_{a'}},\alpha_k\delta_{m_{a'}}\mathfrak{w}^{km_{a'}}\partial_{n_{a'}})\in\bigoplus_{k\geq 1}\mathcal{W}_{P_{m_{a'}}}^{r'}(\tilde{A}_0,\End E\oplus TM)\mathfrak{w}^{km_{a'}}\\
\nu_{T_2}&=(B_{k''}\delta_{m_{a''}}\mathfrak{w}^{k''m_{a''}},\beta_{k''}\delta_{m_{a''}}\mathfrak{w}^{k''m_{a''}}\partial_{n_{a''}})\in\bigoplus_{k''\geq 1}\mathcal{W}_{P_{m_{a''}}}^{r''}(\tilde{A}_0,\End E\oplus TM)\mathfrak{w}^{k''m_{a''}}
\end{split}
\]
where $k'm_{a'}+k''m_{a''}=lm_a$. Thus it is enough to prove the claim for a $2$-tree with ingoing vertex $\nu_{T_1}$ and $\nu_{T_2}$ as above. If $T\in\mathbb{LRT}_2$, then $\nu_T=\nu_{T_\natural}+\nu_{T_\flat}+\nu_{T_\sharp}+\nu_{T_\star}
$ and we explicitly compute $H(\nu_{T_\flat}), H(\nu_{T_\sharp})$ and $H(\nu_{T_\star})$. 
\begin{equation*}
\begin{split}
H(\nu_{T_\flat})&=-\frac{1}{2}H\left(\mathbf{H}\left(\lbrace\nu_{T_1},\nu_{T_2}\rbrace_\flat\right)\right)\\
&=-\frac{1}{2}H\Big(\mathbf{H}\Big(\lbrace(A_k\delta_{m_{a'}}\mathfrak{w}^{km_{a'}},\alpha_k\delta_{m_{a'}}\mathfrak{w}^{km_{a'}}\delta_{n_{a'}}), (B_{k''}\delta_{m_{a''}}\mathfrak{w}^{k''m_{a''}},\beta_{k''}\delta_{m_{a''}}\mathfrak{w}^{k''m_{a''}}\partial_{n_{a''}})\rbrace_{\flat}\Big)\Big)\\
&=-\frac{1}{2}i\hbar H(\mathbf{H}\Big(\beta_{k''}n_2^q\frac{\partial}{\partial x_q}(A_k\delta_{m_{a''}})\wedge\delta_{m_{a'}}\mathfrak{w}^{km_{a'}+k''m_{a''}},\\
&\quad\beta_{k''}n_2^q\delta_{m_{a''}}\wedge\frac{\partial}{\partial x_q}\left(\alpha_k\delta_{m_{a'}}\right)\mathfrak{w}^{km_{a'}+k''m_{a''}}\partial_{n_1}\Big))\\
&=\frac{1}{2}i\hbar H(\Big(\left(\int_0^{u_{m_a}}\beta_{k''}n_2^q\frac{\partial A_k}{\partial x_q}\frac{e^{-\frac{s^2}{\hbar}}}{\sqrt{\pi\hbar}}ds\right)\delta_{m_a}\mathfrak{w}^{lm_a}, \\
&\quad\left(\int_0^{u_{m_a}}\beta_{k''}\frac{\partial\alpha_k}{\partial x_q}n_2^q\frac{e^{-\frac{s^2}{\hbar}}}{\sqrt{\pi\hbar}}ds\right)\delta_{m_a}\mathfrak{w}^{km_{a'}+k''m_{a''}}\partial_{n_{a'}}\Big))+\\
&\quad+\frac{1}{2}i\hbar H(\left(\int_0^{u_{m_a}}\frac{e^{-\frac{s^2}{\hbar}}}{\sqrt{\pi\hbar}}\left(\beta_{k''}n_2^qA_k, \beta_{k''}n_2^q\partial_{n_{a'}}\right)\frac{2\gamma_q(s)}{\hbar}ds\right)\delta_{m_a}\mathfrak{w}^{lm_a})\\
&\in\bigoplus_{l\geq 1}\mathcal{W}_{0}^{r'+r''-2}(\tilde{A}_0)\mathfrak{w}^{lm_a}+\mathcal{W}_{0}^{r'+r''-1}(\tilde{A}_0)\mathfrak{w}^{lm_a}
\end{split}
\end{equation*}
where we assume $k'm_{a'}+k''m_{a''}=lm_a$ and in the last step we use Lemma \ref{lem:H(W^s)} to compute the asymptotic behaviour of $H(\delta_{m_a})$.
We denote by $\gamma^q(s)$ the coordinates $u_{m_a^\perp}$ written as functions of $x^q(s)$. In particular, since $\tilde{\Phi}_{(1,0)}^{(1)}\in\bigoplus_{k_1\geq 1}\mathcal{W}_{P_{m_1}}^1(\tilde{A}_0,\End E\oplus TM)\mathfrak{w}^{k_1m_1}$ and $\tilde{\Phi}_{(0,1)}^{(1)}\in\bigoplus_{k_2\geq 1}\mathcal{W}_{P_{m_2}}^1(\tilde{A}_0,\End E\oplus TM)\mathfrak{w}^{k_2m_2}$, we have $H(\tilde{\Phi}_{(a_1,a_2)}^{(2)})\in\bigoplus_{l\geq 1}\mathcal{W}_{0}^0\mathfrak{w}^{lm_a}$. 
The same holds true for $H(\nu_{T_\sharp})$ by permuting $A,\alpha$ and $B,\beta$.

Then we compute the behaviour of $H(\nu_{T_\star})$:
\begin{equation*}
\begin{split}
H(\nu_{T_\star})&=-\frac{1}{2}H\left(\mathbf{H}\left(\lbrace\nu_{T_1},\nu_{T_2}\rbrace_\star\right)\right)\\
&=H\left(\mathbf{H}\left(\lbrace (A_k\delta_{m_{a'}}\mathfrak{w}^{km_{a'}},\alpha_k\delta_{m_{a'}}\mathfrak{w}^{km_{a'}}\delta_{n_{a'}}), (B_{k''}\delta_{m_{a''}}\mathfrak{w}^{k''m_{a''}},\beta_{k''}\delta_{m_{a''}}\mathfrak{w}^{k''m_{a''}}\partial_{n_{a''}})\rbrace_{\star}\right)\right)\\
&=i\hbar H( \mathbf{H}\left((\alpha_kn_1^qB_{k''}A_q(\phi)\delta_{m_{a''}}\wedge\delta_{m_{a'}}-n_2^q\beta_{k''}A_q(\phi)A_k\delta_{m_{a'}}\wedge\delta_{m_{a''}}, 0)\right))\\
&=i\hbar H\Bigg(\Big(\Bigg(\int_0^{u_{m_{a}}}\alpha_kn_1^qB_{k''}A_q(\phi)\frac{e^{-\frac{s^2}{\hbar}}}{\sqrt{\pi\hbar}}ds-\int_0^{u_{m_{a}}}n_2^q\beta_{k''}A_q(\phi)A_k\frac{e^{-\frac{s^2}{\hbar}}}{\sqrt{\pi\hbar}}ds \Bigg)\delta_{m_a}\mathfrak{w}^{lm_a}, 0\Big)\Bigg)\\
&\in\bigoplus_{l\geq 1}\mathcal{W}_{P_{m_{a}}}^{r'+r''-2}(\tilde{A}_0)\mathfrak{w}^{lm_a}
\end{split}
\end{equation*}
where we denote by $m_a$ the primitive vector such that for some $l\geq1$ $lm_a=k'm_{a'}+k''m_{a''}$. 
Finally let us compute $H(\nu_{T_\natural})$: at $k=1$ there is only a $1$-tree, hence $H(\nu_{T_\natural})=H(\tilde{\Phi}_{(1,0)}^{(s+1)}+\tilde{\Phi}_{(0,1)}^{(s+1)})$ and for all $k_1,k_2\geq 1$
\begin{equation*}
\begin{split}
H(\tilde{\Phi}_{(1,0)}^{(s+1)})&\in(A_{s+1,k_1}t^{s+1}, a_{s+1,k_1}t^{s+1}\partial_{n_1})H(\delta_{m_1}\mathfrak{w}^{k_1m_1})+\mathcal{W}_{P_{m_1}}^0(\tilde{A}_0,\End E\oplus TM)\mathfrak{w}^{k_1m_1} \\
&\in (A_{s+1,k_1}t^{s+1}, a_{s+1,k_1}t^{s+1}\partial_{n_1})\mathfrak{w}^{k_1m_1}+\mathcal{W}_{P_{m_1}}^{-1}(\tilde{A}_0\End E\oplus TM)\mathfrak{w}^{k_1m_1}\\
& \\
H(\tilde{\Phi}_{(0,1)}^{(s+1)})&\in (A_{s+1,k_2}t, a_{s+1,k_2}t\partial_{n_2})H(\delta_{m_2}\mathfrak{w}^{k_2m_2})+\mathcal{W}_{P_{m_2}}^{-1}(\tilde{A}_0,\End E\oplus TM)\mathfrak{w}^{k_2m_2}\\
&\in(A_{s+1,k_2}t^{s+1}, a_{s+1,k_2}t^{s+1}\partial_{n_2})\mathfrak{w}^{k_2m_2}+\mathcal{W}_{P_{m_2}}^{-1}(\tilde{A}_0,\End E\oplus TM)\mathfrak{w}^{k_2m_2}
\end{split}
\end{equation*} 
Then every other $k$-tree ($k\leq s$) can be decomposed in two sub-trees $T_1$ and $T_2$ as above, and we can further assume $T_1, T_2\in\mathbb{LRT}_0$, because if either $T_1$ or $T_2$ contains at least a label different from $\natural$ then by Lemma \ref{lem:bracket natural} their asymptotic behaviour is of higher order in $\hbar$.
We explicitly compute $H(\nu_{T_\natural})$ at $s=1$: 
\begin{equation*}
\begin{split}
H(\nu_{T_\natural})&=H(-\frac{1}{2}\mathbf{H}\left(\lbrace\bar{\Pi}^{(1)},\bar{\Pi}^{(1)}\rbrace_\natural\right))\\
&=-\frac{1}{2}H\left(\mathbf{H}\left(\lbrace\bar{\Pi}_1^{(1)},\bar{\Pi}_2^{(1)}\rbrace_{\natural}+\lbrace\bar{\Pi}_2^{(1)},\bar{\Pi}_1^{(1)}\rbrace_{\natural}\right)\right)\\
&=-H\Big(\mathbf{H}\Big(\big(a_{1,k_1}A_{1,k_2}\langle n_1,k_2m_2\rangle- a_{1,k_2}A_{1,k_1}\langle n_2,k_1m_1\rangle+ [A_{1,k_1},A_{1,k_2}],\\
&\quad a_{1,k_1}a_{1,k_2}\partial_{\langle n_1,k_2m_2\rangle n_2+\langle n_2,k_1m_1\rangle n_1}\big)t^2\delta_{m_1}\wedge\delta_{m_2}\mathfrak{w}^{k_1m_1+k_2m_2})\Big)\Big)\\
&=-H\Big(\Big(a_{1,k_1}A_{1,k_2}\langle n_1,k_2m_2\rangle- a_{1,k_2}A_{1,k_1}\langle n_2,k_1m_1\rangle+ [A_{1,k_1},A_{1,k_2}],\\
&a_{1,k_1}a_{1,k_2}\partial_{\langle n_1,k_2m_2\rangle n_2+\langle n_2,k_1m_1\rangle n_1}\Big)t^2\left(\int_0^{u_{m_a}}\frac{e^{-\frac{s^2}{\hbar}}}{\sqrt{\hbar\pi}}ds\right)\delta_{m_a}\mathfrak{w}^{lm_a}\Big)\\
&=-\Big(a_{1,k_1}A_{1,k_2}\langle n_1,k_2m_2\rangle- a_{1,k_2}A_{1,k_1}\langle n_2,k_1m_1\rangle+ [A_{1,k_1},A_{1,k_2}],\\
&\quad a_{1,k_1}a_{1,k_2}\partial_{\langle n_1,k_2m_2\rangle n_2+\langle n_2,k_1m_1\rangle n_1}\Big)t^2 H\left(\left(\int_0^{u_{m_a}}\frac{e^{-\frac{s^2}{\hbar}}}{\sqrt{\hbar\pi}}ds\right)\delta_{m_a}\mathfrak{w}^{lm_a}\right)\\
&=-\Big(a_{1,k_1}A_{1,k_2}\langle n_1,k_2m_2\rangle- a_{1,k_2}A_{1,k_1}\langle n_2,k_1m_1\rangle+ [A_{1,k_1},A_{1,k_2}],\\
&\quad a_{1,k_1}a_{1,k_2}\partial_{\langle n_1,k_2m_2\rangle n_2+\langle n_2,k_1m_1\rangle n_1}\Big)t^2 \mathfrak{w}^{lm_a}H_{lm_a}\left(\left(\int_0^{u_{m_a}}\frac{e^{-\frac{s^2}{\hbar}}}{\sqrt{\hbar\pi}}ds\right)\delta_{m_a}\right)
\end{split}
\end{equation*}
Now 
\begin{equation*}
\mathfrak{w}^{lm_a}H_{lm_a}\left(\left(\int_0^{u_{m_a}}\frac{e^{-\frac{s^2}{\hbar}}}{\sqrt{\hbar\pi}}ds\right)\delta_{m_a}\right)\in\mathfrak{w}^{lm_a}+\mathcal{W}_0^0(\tilde{A}_0)\mathfrak{w}^{lm_a}
\end{equation*}
Therefore the leading order term of $H(\nu_{T_\natural})$ with labels $m_T=lm_a=k_1m_1+k_2m_2$ at $s=1$ is
\[
\big(a_{1,k_1}A_{1,k_2}\langle n_1,k_2m_2\rangle- a_{1,k_1}A_{1,k_2}\langle n_2,k_1m_1\rangle+ [A_{1,k_1},A_{1,k_2}],a_{1,k_1}a_{1,k_2}\partial_{\langle n_1,k_2m_2\rangle n_2+\langle n_2,k_1m_1\rangle n_1}\big)t^2\mathfrak{w}^{lm_a}
\]
At $s\geq 2$, every other $k$-tree $T$ ($k\leq s$) can be decomposed in two sub-trees, say $T_1$ and $T_2$ such that $\nu_{T_\natural}=-\mathbf{H}(\lbrace\nu_{T_{1,\natural}},\nu_{T_{2,\natural}}\rbrace_\natural)+\bigoplus_{l\geq  1}\mathcal{W}_{P_{m_a}}^1(\tilde{A}_0)\mathfrak{w}^{lm_a}$. 

Notice that the leading order term of $H\left(\mathbf{H}(\lbrace\bar{\Pi}_1^{(1)},\bar{\Pi}_2^{(1)}\rbrace_{\natural})\right)$ is the Lie bracket of \[\lbrace(A_{1,k_1}\mathfrak{w}^{k_1m_1}, a_{1,k_1}\mathfrak{w}^{k_1m_1}\partial_{n_1}),(A_{1,k_2}\mathfrak{w}^{k_2m_2},a_{1,k_2}\mathfrak{w}^{k_2m_2}\partial_{n_2})\rbrace_{\tilde{\mathfrak{h}}}\] hence the leading order term of $H(\nu_{T_\natural})$ belongs to $\tilde{\mathfrak{h}}$.  

\end{proof}

Notice that at any order in the formal parameter $t$, there are only a finite number of terms which contribute to the solution $\Phi$ in the sum \eqref{def:Phitree}, hence we define the set $\mathbb{W}(N)$ as
\begin{equation}
\mathbb{W}(N)\defeq\lbrace a\in\left(\Z^2_{\geq 0}\right)_{\text{prim}}\vert lm_a=m_T \text{ for some } l\geq 0 \text{and } T\in\mathbb{LRT}_k \text{with } 1\leq j_T\leq N\rbrace.
\end{equation}
\begin{definition}[Scattering diagram $\mathfrak{D}_\infty$]\label{def:D_infty}
The order $N$ scattering diagram $\mathfrak{D}_N$ associated to the solution $\Phi$ is \[\mathfrak{D}_N\defeq\big\lbrace\mathsf{w}_1, \mathsf{w}_2\big\rbrace\cup\big\lbrace\mathsf{w}_a=\big(m_a, P_{m_a}, \theta_a\big)\big\rbrace_{a\in\mathbb{W}(N)}\] where
\begin{itemize}
    \item $m_a=a_1m_1+a_2m_2$;
    \item $P_{m_a}=m_0+m_a\R_{\geq 0}$
    \item $\log(\theta_a)$ is the leading order term of the unique gauge $\varphi_a$, as computed in Theorem \eqref{thm:asymptotic_gauge}.
\end{itemize}
The scattering diagram $\mathfrak{D}_\infty\defeq\varinjlim_N\mathfrak{D}_N$.
\end{definition}

\subsection{Consistency of $\mathfrak{D}_\infty$}
We are left to prove consistency of the scattering diagram $\mathfrak{D}_\infty$ associated to the solution $\Phi$. In order to do that we are going to use a monodromy argument (the same approach was used in \cite{MCscattering}).

Let us define the following regions
\begin{align}
&\tilde{\mathbb{A}}\defeq\lbrace(r,\vartheta)\vert \vartheta_0-\epsilon_0+2\pi<\vartheta<\vartheta_0+2\pi\rbrace\\
&\tilde{\mathbb{A}}-2\pi\defeq\lbrace(r,\vartheta)\vert \vartheta_0-\epsilon_0<\vartheta<\vartheta_0\rbrace.
\end{align}  
for small enough $\epsilon_0>0$, such that $\tilde{\mathbb{A}}-2\pi$ is away from all possible walls in $\mathfrak{D}_\infty$.

\begin{theorem}\label{thm:consistentD}
Let $\mathfrak{D}_\infty$ be the scattering diagram defined in \eqref{def:D_infty}. Then it is consistent, i.e. $\Theta_{\mathfrak{D}_\infty, \gamma}=\text{Id}$ for any closed path $\gamma$ embedded in $U\setminus\lbrace m_0\rbrace$, which intersects $\mathfrak{D}_\infty$ generically. 
\end{theorem}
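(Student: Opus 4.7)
The proof strategy is a monodromy argument on the universal cover $\varpi\colon\tilde{A}\to A$, where $A=U\setminus\{m_0\}$. Since every wall of $\mathfrak{D}_\infty$ passes through $m_0$, any closed path $\gamma\subset A$ that meets $\mathfrak{D}_\infty$ generically is homotopic, through such paths, to an integer multiple of a small loop $\gamma_0$ encircling $m_0$ once. The path-ordered product $\Theta_{\gamma,\mathfrak{D}_\infty}$ depends only on the free-homotopy class of $\gamma$ in $A\setminus\mathrm{Sing}(\mathfrak{D}_\infty)$, so it is enough to show $\Theta_{\gamma_0,\mathfrak{D}_\infty}=\mathrm{Id}$.

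Fix a truncation order $N$. Then only finitely many primitive classes $a_1,\dots,a_r\in\mathbb{W}(N)$ contribute modulo $t^{N+1}$; order them so that $\vartheta_{a_1}<\cdots<\vartheta_{a_r}$ in $(\vartheta_0,\vartheta_0+2\pi)$. For each $i$, Theorem \ref{thm:asymptotic_gauge} produces the unique normalised gauge $\varphi_{a_i}\in\Omega^0(\tilde{A}_0,\End E\oplus TM)\llbracket t\rrbracket$ satisfying $e^{\varphi_{a_i}}\ast 0=\tilde\Phi_{a_i}$ and $p(\varphi_{a_i})=0$. I would then form the ordered BCH composition
\[
\varphi \defeq \varphi_{a_r}\bullet\varphi_{a_{r-1}}\bullet\cdots\bullet\varphi_{a_1},
\]
and, using that by Lemma \ref{lem:Phi_aMC} the cross-brackets $\{\tilde\Phi_{a_i},\tilde\Phi_{a_j}\}_\sim$ for $i\neq j$ lie in $\mathcal{W}_2^{-\infty}$, argue that the compositions of gauge actions telescope to give $e^{\varphi}\ast 0=\tilde\Phi$ modulo $\bigoplus_l\mathcal{W}_1^{-\infty}(\tilde{A}_0,\End E\oplus TM)\mathfrak{w}^{lm}$. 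This is the step where asymptotic support outside the singular point is crucial.

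The jumping estimate in Theorem \ref{thm:asymptotic_gauge} now pins down the asymptotics of $\varphi$ in the two reference regions. On $\tilde{\mathbb{A}}-2\pi$ every point lies in $H_{m_{a_i},-}$ for each $i$, so each $\varphi_{a_i}$ is exponentially suppressed and the asymptotic of $\varphi|_{\tilde{\mathbb{A}}-2\pi}$ is trivial. On $\tilde{\mathbb{A}}$ every point lies in $H_{m_{a_i},+}$ for each $i$, so by Theorem \ref{thm:asymptotic_gauge} the leading $\tilde{\mathfrak{h}}$-asymptotic of $\varphi_{a_i}$ is $\log\theta_{a_i}$, and BCH gives
\[
\varphi|_{\tilde{\mathbb{A}}}\;\sim\;\log(\theta_{a_r})\bullet\cdots\bullet\log(\theta_{a_1})=\log\Theta_{\gamma_0,\mathfrak{D}_N}.
\]

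To conclude, I observe that $\tilde{\mathbb{A}}-2\pi$ and $\tilde{\mathbb{A}}$ project under $\varpi$ to the same open set of $A$, and $\tilde\Phi$ is the pullback of the single-valued $\Phi$. Hence $\tilde\Phi|_{\tilde{\mathbb{A}}-2\pi}$ and $\tilde\Phi|_{\tilde{\mathbb{A}}}$ coincide after identification. Both $\varphi|_{\tilde{\mathbb{A}}-2\pi}$ and $\varphi|_{\tilde{\mathbb{A}}}$ are obtained by the same $p=0$ gauge-fixing prescription on the same underlying MC solution; by uniqueness of the normalised gauge (Lemma \ref{lem:uniq}) their asymptotic images in $\tilde{\mathfrak{h}}$ must agree. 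Comparing the two computations above yields $\log\Theta_{\gamma_0,\mathfrak{D}_N}=0$ in $\tilde{\mathfrak{h}}$ modulo $t^{N+1}$ for every $N$, hence $\Theta_{\gamma_0,\mathfrak{D}_\infty}=\mathrm{Id}$. The main obstacle I anticipate is step two, namely verifying that the ordered BCH product $\varphi$ truly gauges $0$ to $\tilde\Phi$ up to exponentially small error: one must control not only the principal $\natural$-terms but also the subleading $\flat,\sharp,\star$ contributions (using Lemma \ref{lem:bracket natural}) together with the fact that the pairwise cross-brackets are in $\mathcal{W}_2^{-\infty}$ away from $m_0$, so that all error terms remain in the $\mathcal{W}_0^{-\infty}$ ideal that is invisible to the $\tilde{\mathfrak{h}}$-valued asymptotic scattering diagram.
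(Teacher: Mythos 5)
Your proposal follows the same monodromy argument as the paper: reduce to a small loop about $m_0$, truncate at order $N$, compose the normalised single-ray gauges $\varphi_{a}$ in angular order, identify the composite with the pullback of the single-valued normalised gauge $\varpi^*(\varphi)$ for $\Phi$, and compare its asymptotics on $\tilde{\mathbb{A}}$ (where all walls have been crossed) versus $\tilde{\mathbb{A}}-2\pi$ (where none have) to conclude that $\log\Theta_{\gamma_0,\mathfrak{D}_N}$ is exponentially small, hence zero in $\tilde{\mathfrak{h}}$.

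Two remarks. First, your wording of the concluding step is slightly loose: $\varphi|_{\tilde{\mathbb{A}}}$ and $\varphi|_{\tilde{\mathbb{A}}-2\pi}$ are not literally ``two normalised gauges for the same solution'' to which Lemma~\ref{lem:uniq} applies directly; what makes the argument work is that the BCH product $\varphi_{a_r}\bullet\cdots\bullet\varphi_{a_1}$ coincides on $\tilde{A}_0$ with the pullback $\varpi^*(\psi)$ of the unique normalised gauge $\psi$ taking $0$ to $\Phi$ on all of $U$ (by applying uniqueness once, on $\tilde{A}_0$, with base point $q_0$), and $\varpi^*(\psi)$ is by construction periodic, i.e.\ takes identical values on the two annular sectors that project to the same subset of $A$. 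This is exactly what the paper does. Second, you are right to flag your ``step two'' (that the ordered composition of the single-ray gauges produces $\tilde\Phi$ up to exponentially suppressed error) as the delicate point. The paper proves the analogous identity $\prod^\gamma_a e^{\varphi_a}\ast 0 = \sum_a\tilde\Phi_a$ but dispatches the cross term $\{\varphi_a,\tilde\Phi_{a'}\}_\sim$ rather quickly, invoking a degree argument; your more cautious appeal to $\mathcal{W}^{-\infty}$ control on cross-brackets, via Lemma~\ref{lem:Phi_aMC} and Lemma~\ref{lem:bracket natural}, is the honest way to close this, and is where the real technical content lies. No genuine gap: your route is the paper's route with a more careful bookkeeping of error terms.
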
 
\begin{proof}
It is enough to prove that $\mathfrak{D}_N$ is consistent for any fixed $N>0$. First of all recall that $\Theta_{\gamma,\mathfrak{D}_N}=\prod_{a\in\mathbb{W}(N)}^\gamma \theta_a $.  
Then let us prove that the following identity \begin{equation}\label{eq:step1}
\prod_{a\in\mathbb{W}(N)}^\gamma e^{\varphi_a}\ast 0=\sum_{a\in\mathbb{W}(N)}\tilde{\Phi}_{a}
\end{equation}
holds true. 
Indeed \[\left(e^{\varphi_a}\ast e^{\varphi_{a'}}\right)\ast 0=e^{\varphi_a}\ast\left(\tilde{\Phi}_{a'}\right)=\tilde{\Phi}_{a'}-\sum_k\frac{\textsf{ad}^k_{\varphi_a}}{k!}\left(d_W\varphi_a+\lbrace\varphi_a,\tilde{\Phi}_{a'}\rbrace_\sim\right).\]
For degree reason $\lbrace \varphi_a,\tilde{\Phi}_{a'}\rbrace_\sim=0$ and by definition \[-\sum_k\frac{\textsf{ad}^k_{\varphi_a}}{k!}\big(d_W\varphi_a\big)=e^{\varphi_a}\ast 0=\tilde{\Phi}_{a} .\] 
Iterating the same procedure for more than two rays, we get the result. 

Recall that if $\varphi$ is the unique gauge such that $p(\varphi)=0$ and $e^\varphi\ast0=\Phi$, then $e^{\varpi^*(\varphi)}\ast 0=\varpi^*(\Phi)$ on $\tilde{A}$. Hence
$e^{\varpi^*(\varphi)}\ast 0=\varpi^*(\Phi)=\sum_{a\in\mathbb{W}(N)}\varpi^*(\Phi_{a})=\sum_{a\in\mathbb{W}(N)}\tilde{\Phi}_{a}$ and by equation \eqref{eq:step1}
\[e^{\varpi^*(\varphi)}\ast 0=\prod_{a\in\mathbb{W}(N)}^\gamma e^{\varphi_a}\ast 0.\]
In particular, by uniqueness of the gauge, $e^{\varpi^*(\varphi)}=\prod_{a\in\mathbb{W}(N)}^\gamma e^{\varphi_a}.$ Since $\varpi^*(\varphi)$ is defined on all $U$, $e^{\varpi^*(\varphi)}$ is monodromy free, i.e.
\[\prod_{a\in\mathbb{W}(N)}^\gamma e^{\varphi_a}\vert_{\tilde{\mathbb{A}}}=\prod_{a\in\mathbb{W}(N)}^\gamma e^{\varphi_a}\vert_{\tilde{\mathbb{A}}-2\pi}.\]
Notice that $\tilde{\mathbb{A}}-2\pi$ does not contain the support of $\varphi_a$ $\forall a\in\left(\Z_{\geq 0}^2\right)_{\text{prim}}$, therefore \[\prod_{a\in\mathbb{W}(N)}^\gamma e^{\varphi_a}\vert_{\tilde{\mathbb{A}}-2\pi}=\prod_{a\in\mathbb{W}(N)}^\gamma e^{0}=\text{Id}.\]

\end{proof}
\section{Relation with the wall-crossing formulas in coupled $2d$-$4d$ systems}\label{sec:WCF}

We are going to show how wall-crossing formulas in coupled $2d$-$4d$ systems, introduced by Gaiotto, Moore and Nietzke in \cite{WCF2d-4d},  can be interpreted in the framework we were discussing before. Let us first recall the setting for the $2d$-$4d$ WCFs: 

\begin{itemize}
	\item let $\Gamma$ be a lattice, whose elements are denoted by $\gamma$;
	\item  define an antisymmetric bilinear form $\langle\cdot ,\cdot\rangle_D\colon\Gamma\times\Gamma\to\Z$, called the Dirac pairing;
	\item  let $\Omega\colon\Gamma\to\Z$ be a homomorphism;
	\item denote by $\mathcal{V}$ a finite set of indices, $\mathcal{V}=\lbrace i,j,k,\cdots\rbrace$;
    \item define a $\Gamma$-torsor $\Gamma_i$, for every $i\in\mathcal{V}$. Elements of $\Gamma_i$ are denoted by $\gamma_i$ and the action of $\Gamma$ on $\Gamma_i$ is $\gamma+\gamma_i=\gamma_i+\gamma$;
    \item define another $\Gamma$-torsor $\Gamma_{ij}\defeq\Gamma_i-\Gamma_j$ whose elements are formal differences $\gamma_{ij}\defeq\gamma_i-\gamma_j$ up to equivalence, i.e. $\gamma_{ij}=(\gamma_{i}+\gamma)-(\gamma_j+\gamma)$ for every $\gamma\in\Gamma$. If $i=j$, then $\Gamma_{ii}$ is identify with $\Gamma$. The action of $\Gamma$ on $\Gamma_{ij}$ is $\gamma_{ij}+\gamma=\gamma+\gamma_{ij}$. Usually it is not possible to sum elements of $\Gamma_{ij}$ and $\Gamma_{kl}$, for instance $\gamma_{ij}+\gamma_{kl}$ is well defined only if $j=k$ and in this case it is an element of $\Gamma_{il}$;
    \item let $Z\colon\Gamma\to\C$ be a homomorphism and define its extension as an additive map $Z\colon\amalg_{i\in\mathcal{V}}\Gamma_{i}\to\C$, such that $Z({\gamma+\gamma_i})=Z(\gamma)+Z({\gamma_i})$. In particular, by additivity $Z$ is a map from $\amalg_{i,j\in\mathcal{V}}\Gamma_{ij}$ to $\C$, namely $Z(\gamma_{ij})=Z(\gamma_i)-Z(\gamma_j)$. The map $Z$ is usually called the central charge;
    \item let $\sigma(a,b)$ be a \textit{twisting} function defined whenever $a+b$ is defined for $a,b\in\Gamma\sqcup\amalg_i\Gamma_i\sqcup\amalg_{i\neq j}\Gamma_{ij}$ and valued in $\lbrace\pm 1\rbrace$. Moreover it satisfies the following conditions:
\begin{equation}
\begin{split}
&\text{(i)}\quad \sigma(a,b+c)\sigma(b,c)=\sigma(a,b)\sigma(a+b,c) \\
&\text{(ii)}\quad {\sigma(a,b)=\sigma(b,a)} \text{ if both $a+b$ and $b+a$ are defined} \\
&\text{(iii)}\quad \sigma(\gamma,\gamma')=(-1)^{\langle\gamma,\gamma'\rangle_D}\,\,\forall\gamma,\gamma'\in\Gamma;
\end{split}
\end{equation}
    \item let $X_a$ denote formal variables, for every $a\in\Gamma\sqcup\amalg_i\Gamma_i\sqcup\amalg_{i\neq j}\Gamma_{ij}$. There is a notion of associative product: 
\[
X_{a}X_{b}\defeq\begin{cases}
\sigma(a,b)X_{a+b} & \text{if the sum $a+b$ is defined }\\
0 & \text{otherwise}
\end{cases}
\]
\end{itemize}

The previous data fit well in the definition of a pointed groupoid $\mathbb{G}$, as it is defined in \cite{WCF2d-4d}. In particular $\Ob(\mathbb{G})=\mathcal{V}\sqcup\lbrace o\rbrace$ and $\Mor(\mathbb{G})=\amalg_{i,j\in\Ob(\mathbb{G})}\Gamma_{ij}$, where the torsor $\Gamma_i$ is identify with $\Gamma_{io}$ and elements of $\Gamma$ are identify with $\amalg_i\Gamma_{ii}$. The composition of morphism is written as a sum, and the formal variables $X_a$ are elements of the groupoid ring $\C[\mathbb{G}]$. 
In this setting, BPS rays are defined as 
\[
\begin{split}
&l_{ij}\defeq Z(\gamma_{ij})\R_{>0}\\
&l\defeq Z(\gamma)\R_{>0}
\end{split}
\] and they are decorated with automorphisms of $\C[\mathbb{G}][\![ t ]\!]$ respectively of type $S$ and of type $K$, defined as follows: let $X_a\in\C[\mathbb{G}]$, then

\begin{equation}\label{eq:autS}
S_{\gamma_{ij}}^{\mu}(X_a)\defeq\big(1-\mu({\gamma_{ij}})tX_{\gamma_{ij}}\big)X_a\big(1+\mu(\gamma_{ij})tX_{\gamma_{ij}}\big)    
\end{equation}
where $\mu\colon\amalg_{i,j\in\mathcal{V}}\Gamma_{ij}\to\Z$ is a homomorphism;
\begin{equation}\label{eq:autK}
 K_{\gamma}^{\omega}(X_a)\defeq\big(1-X_{\gamma}t\big)^{-\omega(\gamma,a)}X_a   
\end{equation}
where $\omega\colon\Gamma\times\amalg_{i\in\mathcal{V}}\Gamma_i\to\Z$ is a homomorphism such that $\omega(\gamma,\gamma')=\Omega(\gamma)\langle\gamma,\gamma'\rangle_D$ and $\omega(\gamma,a+b)=\omega(\gamma,a)+\omega(\gamma,b)$ for $a,b\in\mathbb{G}$.

In particular under the previous assumption, the action of $S_{\gamma_{ij}}^\mu$ and $K_\gamma^\omega$ can be explicitly computed on variables $X_\gamma$ and $X_{\gamma_k}$ as follows:
\begin{equation}\label{eq:actionP}
\begin{split}
    S_{\gamma_{ij}}^\mu &\colon X_{\gamma'}\to X_{\gamma'}\\
    S_{\gamma_{ij}}^\mu &\colon X_{\gamma_k}\to\begin{cases}  X_{\gamma_k} & \text{if } k\neq j\\
    X_{\gamma_j}- \mu(\gamma_{ij})tX_{\gamma_{ij}}X_{\gamma_j} & \text{if } k=j
     \end{cases}\\
    K_\gamma^\omega &\colon X_{\gamma'}\to (1-tX_\gamma)^{-\omega(\gamma,\gamma')}X_{\gamma'}\\
    K_\gamma^\omega &\colon X_{\gamma_k}\to (1-tX_\gamma)^{-\omega(\gamma,\gamma_k)}X_{\gamma_k}
   \end{split}
\end{equation}

In order to interpret the automorphisms $S$ and $K$ as elements of $\exp(\tilde{\mathfrak{h}})$ we are going to introduce their infinitesimal generators. Let $\Der\left(\C[\mathbb{G}]\right)$ be the Lie algebra of the derivations of $\C[\mathbb{G}]$ and define:
\begin{equation}
    \mathfrak{d}_{\gamma_{ij}}\defeq \textsf{ad}_{X_{\gamma_{ij}}}
\end{equation}
where $\mathfrak{d}_{\gamma_{ij}}(X_a)\defeq \left( X_{\gamma_{ij}}X_a-X_aX_{\gamma_{ij}}\right)$, for every $X_a\in\C[\mathbb{G}]$; 
\begin{equation}
    \mathfrak{d}_\gamma\defeq \omega(\gamma,\cdot)X_\gamma
\end{equation}
where $\mathfrak{d}_\gamma(X_a)\defeq \left(\omega(\gamma,a)X_{\gamma}X_{a}\right)$, for every $X_a\in\C[\mathbb{G}]$. 
\begin{definition}
Let $\mathbf{L}_\Gamma$ be the $\C[\Gamma]$- module generated by $\mathfrak{d}_{\gamma_{ij}}$ and $\mathfrak{d}_\gamma$, for every $i\neq j\in\mathcal{V}$, $\gamma\in\Gamma$. 
\end{definition}
For instance a generic element of $\mathbf{L}_\Gamma$ is given by \[\sum_{i,j\in\mathcal{V}}\sum_{l\geq 1}c_l^{(\gamma_{ij})}X_{a_{l}}\mathfrak{d}_{\gamma_{ij}}+\sum_{\gamma\in\Gamma}\sum_{l\geq 1}c_l^{(\gamma)}X_{a_l}\mathfrak{d}_{\gamma}\] 
where $c_l^{(\bullet)}X_{a_l}\in\C[\Gamma]$.  
\begin{lemma}\label{lem:CGamma_module}
Let $\mathbf{L}_\Gamma$ be the $\C[\Gamma]$-module defined above. Then, it is a Lie ring\footnote{A Lie ring $\mathbf{L}_\Gamma$ is an abelian group $(L,+)$ with a bilinear form $[,]\colon:L\times L\to L$ such that 
\begin{enumerate}
\item $[\cdot,\cdot]$ is antisymmetric, i.e. $[a,b]=-[b,a]$;
\item $[\cdot,\cdot]$ satisfy the Jacobi identity. 
\end{enumerate}
} with the the Lie bracket $[\cdot, \cdot]_{\Der(\C[\mathbb{G}])}$ induced by $\Der(\C[\mathbb{G}])$\footnote{$\mathbf{L}_\Gamma$ is not a Lie algebra over $\C[\Gamma]$ because the bracket induced from $\Der(\C[\mathbb{G}])$ is not $\C[\Gamma]-$linear.}.  
\end{lemma}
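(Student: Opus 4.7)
The plan is to first show that $\mathbf{L}_\Gamma$ sits inside $\Der(\C[\mathbb{G}])$, so that antisymmetry and the Jacobi identity come for free from the ambient commutator, and then to verify closure. Using property~(ii) of the twisting function $\sigma$ together with the groupoid product, one checks that the subring $\C[\Gamma]$ is central in $\C[\mathbb{G}]$: every $X_\gamma$ with $\gamma\in\Gamma$ commutes with arbitrary $X_c\in\C[\mathbb{G}]$, since both products $\gamma+c$ and $c+\gamma$ are defined and (ii) forces the two twists to agree. In particular each generator $\mathfrak{d}_\gamma$ is a bona-fide derivation (direct check using $\omega(\gamma,a+b)=\omega(\gamma,a)+\omega(\gamma,b)$ and centrality), and multiplication by a central element preserves the derivation property, so every monomial $X_a g$ with $X_a\in\C[\Gamma]$ and $g$ a generator is itself a derivation. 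Hence $\mathbf{L}_\Gamma\subseteq\Der(\C[\mathbb{G}])$, and the commutator is antisymmetric and satisfies Jacobi on $\mathbf{L}_\Gamma$ automatically.

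By $\C$-bilinearity the remaining closure claim reduces to brackets of monomials $[X_a g,X_b g']$ with $g,g'\in\{\mathfrak{d}_{\gamma_{ij}},\mathfrak{d}_\gamma\}$. Centrality plus the Leibniz rule yields
\[
[X_a g, X_b g'] = g(X_b)\,X_a\, g' \;-\; g'(X_a)\,X_b\, g \;+\; X_a X_b\, [g, g'].
\]
The quick checks $\mathfrak{d}_{\gamma_{ij}}(X_b)=[X_{\gamma_{ij}},X_b]=0$ (by centrality) and $\mathfrak{d}_\gamma(X_b)=\omega(\gamma,b)\sigma(\gamma,b)X_{\gamma+b}\in\C[\Gamma]$ show that the first two summands already lie in $\mathbf{L}_\Gamma$. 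Closure therefore collapses to verifying $[g,g']\in\mathbf{L}_\Gamma$ for the three pairs of generator types.

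Two of those pairs are routine. For $[\mathfrak{d}_{\gamma_{ij}},\mathfrak{d}_{\gamma'_{kl}}]=\ad_{[X_{\gamma_{ij}},X_{\gamma'_{kl}}]}$ a case inspection on the index patterns shows the commutator either vanishes (when neither composition $\gamma_{ij}+\gamma'_{kl}$, $\gamma'_{kl}+\gamma_{ij}$ is defined, or when it lands in the central $\C[\Gamma]$) or produces a single $\mathfrak{d}_{\gamma''}$ with $\gamma''\in\Gamma_{il}$ or $\Gamma_{kj}$. For $[\mathfrak{d}_\gamma,\mathfrak{d}_{\gamma'_{ij}}]$ the derivation property of $\mathfrak{d}_\gamma$ gives
\[
[\mathfrak{d}_\gamma,\ad_{X_{\gamma'_{ij}}}]=\ad_{\mathfrak{d}_\gamma(X_{\gamma'_{ij}})}=\omega(\gamma,\gamma'_{ij})\sigma(\gamma,\gamma'_{ij})\,\mathfrak{d}_{\gamma+\gamma'_{ij}},
\]
manifestly in $\mathbf{L}_\Gamma$.

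The main obstacle is the third pair $[\mathfrak{d}_\gamma,\mathfrak{d}_{\gamma'}]$ with $\gamma,\gamma'\in\Gamma$. A direct expansion (using only centrality, additivity of $\omega$, and antisymmetry of $\langle\cdot,\cdot\rangle_D$) produces
\[
[\mathfrak{d}_\gamma,\mathfrak{d}_{\gamma'}](X_a)=\Omega(\gamma)\Omega(\gamma')\langle\gamma,\gamma'\rangle_D\,\langle\gamma+\gamma',a\rangle_D\, X_\gamma X_{\gamma'}X_a,
\]
and the tempting ansatz $[\mathfrak{d}_\gamma,\mathfrak{d}_{\gamma'}]\propto\mathfrak{d}_{\gamma+\gamma'}$ breaks down whenever $\Omega(\gamma+\gamma')=\Omega(\gamma)+\Omega(\gamma')=0$: the right-hand side above can remain nonzero while $\mathfrak{d}_{\gamma+\gamma'}$ vanishes identically as an operator. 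The remedy is to split $\langle\gamma+\gamma',a\rangle_D=\langle\gamma,a\rangle_D+\langle\gamma',a\rangle_D$ and redistribute the two pieces across two different generators; comparing with $(X_{\gamma'}\mathfrak{d}_\gamma)(X_a)$ and $(X_\gamma\mathfrak{d}_{\gamma'})(X_a)$ yields the two-term identity
\[
[\mathfrak{d}_\gamma,\mathfrak{d}_{\gamma'}]=\langle\gamma,\gamma'\rangle_D\bigl(\Omega(\gamma')\,X_{\gamma'}\mathfrak{d}_\gamma+\Omega(\gamma)\,X_\gamma\mathfrak{d}_{\gamma'}\bigr),
\]
which lives visibly in the $\C[\Gamma]$-module $\mathbf{L}_\Gamma$ and completes the verification of closure.
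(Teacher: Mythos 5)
Your proof is correct, but it takes a genuinely different, more structural route than the paper's. The paper verifies closure by directly expanding all three brackets of \emph{full} $\C[\Gamma]$-scaled monomials $[X_\gamma g, X_{\gamma'} g']$, chasing twists and $\omega$-coefficients throughout. You instead isolate the two organising facts -- that $\C[\Gamma]$ is central in $\C[\mathbb{G}]$ (a one-line consequence of property (ii) of $\sigma$) and that each bare generator is a derivation -- and then use the Leibniz expansion $[X_a g, X_b g'] = g(X_b)X_a g' - g'(X_a)X_b g + X_a X_b[g,g']$, whose first two summands land in $\mathbf{L}_\Gamma$ for free because $g(X_b)\in\C[\Gamma]\cup\{0\}$. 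This collapses closure to the brackets of bare generators, and your Case 2 use of $[D,\ad_x]=\ad_{D(x)}$ and the observation that $\ad$ kills $\C[\Gamma]$-valued commutators in Case 1 are cleaner than the paper's index-by-index manipulations. The Leibniz decomposition also makes the footnote of the lemma (non-$\C[\Gamma]$-linearity of the bracket) transparent, which the paper's proof leaves implicit.

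One small inaccuracy: in Case 3 the displayed identity
$[\mathfrak{d}_\gamma,\mathfrak{d}_{\gamma'}](X_a)=\Omega(\gamma)\Omega(\gamma')\langle\gamma,\gamma'\rangle_D\langle\gamma+\gamma',a\rangle_D X_\gamma X_{\gamma'}X_a$
silently assumes $\omega(\gamma,a)=\Omega(\gamma)\langle\gamma,a\rangle_D$ for \emph{all} $a\in\mathbb{G}$. At the point of Lemma~\ref{lem:CGamma_module} the paper only posits $\omega(\gamma,\gamma')=\Omega(\gamma)\langle\gamma,\gamma'\rangle_D$ for $\gamma,\gamma'\in\Gamma$ together with additivity in the second slot; the stronger assumption on $\omega$ is introduced only later, in Theorem~\ref{thm:homomLiering}. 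This does not affect your conclusion: the computation you sketch actually yields, using only additivity, centrality, and $\omega|_{\Gamma\times\Gamma}=\Omega\langle\cdot,\cdot\rangle_D$, the coefficient $\omega(\gamma',a)\omega(\gamma,\gamma')-\omega(\gamma,a)\omega(\gamma',\gamma)=\langle\gamma,\gamma'\rangle_D\bigl(\Omega(\gamma)\omega(\gamma',a)+\Omega(\gamma')\omega(\gamma,a)\bigr)$, which gives your final two-term identity $[\mathfrak{d}_\gamma,\mathfrak{d}_{\gamma'}]=\langle\gamma,\gamma'\rangle_D\bigl(\Omega(\gamma')X_{\gamma'}\mathfrak{d}_\gamma+\Omega(\gamma)X_\gamma\mathfrak{d}_{\gamma'}\bigr)$ directly, without the extra hypothesis. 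So the argument stands; only that intermediate display should be either dropped or replaced by the $\omega$-form.
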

A proof of this Lemma is in appendix \ref{proof:CGamma_module}.

We can now define the infinitesimal generators of $S_{\gamma_{ij}}^\mu$ and $K_\gamma^\omega$ as elements of $\mathbf{L}_\Gamma$: we first define  
\begin{equation}
    \mathfrak{s}_{\gamma_{ij}}\defeq -\mu(\gamma_{ij})t\mathfrak{d}_{\gamma_{ij}}
\end{equation}
then  $\exp(\mathfrak{s}_{\gamma_{ij}})=S_{\gamma_{ij}}^\mu$, indeed
\begin{equation*}
    \begin{split}
     \exp(\mathfrak{s}_{\gamma_{ij}})(X_a)&=\sum_{k\geq0 }\frac{1}{k!}\mathfrak{s}_{\gamma_{ij}}^k(X_a)\\
     &=\sum_{k\geq0}\frac{(-1)^k}{k!}t^k\mu(\gamma_{ij})^k\mathsf{ad}_{X_{\gamma_{ij}}}^k(X_a)\\
     &=X_a-\mu(\gamma_{ij})t\ad_{\gamma_{ij}}(X_a)+\frac{1}{2}t^2\mu(\gamma_{ij})^2\mathsf{ad}_{\gamma_{ij}}^2(X_a),
  \end{split}
\end{equation*}
where $\mathsf{ad}_{\gamma_{ij}}(X_a)=X_{\gamma_{ij}}X_{a}-X_{a}X_{\gamma_{ij}}$. Hence 

\[\mathsf{ad}_{\gamma_{ij}}^2(X_a)=-2X_{\gamma_{ij}}X_aX_{\gamma_{ij}}-t^2X_{\gamma_{ij}}X_{a}X_{\gamma_{ij}}\]

and since $\gamma_{ij}$ can not be composed with $\gamma_{ij}+a+\gamma_{ij}$, then  $\ad_{\gamma_{ij}}^3(X_a)=0$. Moreover if $a\in\Gamma$ then $\ad_{\gamma_{ij}}X_a=0$, while if $a=\gamma_{ok}$ then $\mathsf{ad}^2X_a=0$ and we recover the formulas \eqref{eq:actionP}.
 
Then we define 
\begin{equation}
    \mathfrak{k}_{\gamma}\defeq\sum_{l\geq 1}\frac{1}{l}t^{l}X_{\gamma}^{(l-1)}\mathfrak{d}_\gamma
\end{equation}
and we claim $\exp(\mathfrak{k}_\gamma)=K_{\gamma}^\omega$, indeed
     
\begin{equation*}
    \begin{split}
     \exp(\mathfrak{k}_\gamma)(X_a)&=\sum_{k\geq 0} \frac{1}{k!}\mathfrak{k}_\gamma^k(X_a)  \\
&=\sum_{k\geq0}\frac{1}{k!}\left(\sum_{l_k\geq1}\frac{1}{l_k}t^{l_k}X_{\gamma}^{l_k}\omega(\gamma,\cdot)\left(\cdots\left(\sum_{l_2\geq1}t^{l_2}\frac{1}{l_2}X_{\gamma}^{l_2}\omega(\gamma,\cdot)\left(\sum_{l_1\geq1}\frac{1}{l_1}t^{l_1}\omega(\gamma,a)X_{l_1\gamma}X_{a}\right)\right)\cdots\right)\right)\\
&=\sum_{k\geq0}\frac{1}{k!}\left(\sum_{l_k\geq1}\frac{1}{l_k}t^{l_k}X_{\gamma}^{l_k}\omega(\gamma,\cdot)\left(\cdots\left(\sum_{l_2\geq1}\sum_{l_1\geq1}\frac{1}{l_1l_2}t^{l_1+l_2}\omega(\gamma,l_1\gamma+a)\omega(\gamma,a)X_{\gamma}^{l_2}X_{\gamma}^{l_1}X_{a}\right)\cdots\right)\right)\\
&=\sum_{k\geq0}\frac{1}{k!}\left(\sum_{l_k\geq1}\frac{1}{l_k}t^{l_k}X_{\gamma}^{l_k}\omega(\gamma,\cdot)\left(\cdots\left(\sum_{l_2\geq1}\sum_{l_1\geq1}\frac{1}{l_1l_2}t^{l_1+l_2}\omega(\gamma,a)\omega(\gamma,a)X_{\gamma}^{l_2+l_1}X_{a}\right)\cdots\right)\right)\\
     &=\sum_{k\geq0}\frac{1}{k!}\omega(\gamma+a)^k\left(\sum_{l\geq1}\frac{1}{l}t^lX_{\gamma}^l\right)^kX_a\\ 
&=\exp\left(-\omega(\gamma,a)\log(1-tX_\gamma)\right)X_a.
    \end{split}
\end{equation*}

From now on we are going to assume that $\Gamma\cong\Z^2\cong\Lambda$. We distinguish between polynomial in $\C[\Gamma]$ and $\C[\Lambda]$ by writing $X_\gamma$ for a variable in $\C[\Gamma]$ and $\mathfrak{w}^\gamma$ as a variable in $\C[\Lambda]$.   
\begin{remark}
The group ring $\C[\Gamma]$ is isomorphic to $\C[\Lambda]$ even if there two different products: on $\C[\Gamma]$ the product is $X_{\gamma}X_{\gamma'}\defeq\sigma(\gamma,\gamma')X_{\gamma+\gamma'}=(-1)^{\langle\gamma,\gamma'\rangle_D}X_{\gamma+\gamma'}$ while the product in $\C[\Lambda]$ is defined by $\mathfrak{w}^{\gamma}\mathfrak{w}^{\gamma'}=\mathfrak{w}^{\gamma+\gamma'}$. In particular the isomorphism depends on the choice of $\sigma$. 
\end{remark}

Let us choose an element $e_{ij}\in\Gamma_{ij}$ for every $i\neq j\in\mathcal{V}$ and set $e_{ii}\defeq0\in\Gamma$ for every $i\in\mathcal{V}$. Under this assumption $\mathbf{L}_\Gamma$ turns out to be generated by $ \mathfrak{d}_{e_{ij}}$ for all $i\neq j\in\mathcal{V}$ and by $\mathfrak{d}_{\gamma}$ for every $\gamma\in\Gamma$. Indeed every $\gamma_{ij}\in\Gamma_{ij}$ can be written as $e_{ij}+\gamma$ for some $\gamma\in\Gamma$ and $\mathfrak{d}_{\gamma_{ij}}=\mathfrak{d}_{e_{ij}+\gamma}=X_\gamma\mathfrak{d}_{e_{ij}}$. Then, we define an additive map \[
\begin{split}
m&\colon\amalg_{i,j\in\mathcal{V}}\Gamma_{ij}\to\Gamma\\
m&(\gamma_{ij})\defeq \gamma_{ij}-e_{ij}
\end{split}
\]
In particular, notice that $m(\gamma_{ii})=\gamma_{ii}-e_{ii}=\gamma_{ii}$, hence, since $\Gamma=\amalg_i\Gamma_{ii}$, $m(\Gamma)=\Gamma$.  

We now define a $\C[\Gamma]$-module in the Lie algebra $\tilde{\mathfrak{h}}$: 
\begin{definition}
Define $\tilde{\mathbf{L}}$ as the $\C[\Lambda]$-module generated by $\tilde{\mathfrak{l}}_{\gamma_{ij}}\defeq\left(E_{ij}\mathfrak{w}^{m(\gamma_{ij})},0\right)$ for every $i\neq j\in\mathcal{V}$ and $\tilde{\mathfrak{l}}_\gamma\defeq\left(0, \Omega(\gamma)\mathfrak{w}^{\gamma}\partial_{n_\gamma}\right)$ for every $\gamma\in\Gamma$, where $E_{ij}\in\mathfrak{gl}(r)$ is an elementary matrix with all zeros and a $1$ in position $ij$.
\end{definition}
 
\begin{lemma}\label{lem:CLambdamodule}
The $\C[\Lambda]$-module $\tilde{\mathbf{L}}$ is a Lie ring with respect to the Lie bracket induced by $\tilde{\mathfrak{h}}$ (see Definition \eqref{eq:Liebracket}).\footnote{$\tilde{\mathbf{L}}$ is not a Lie sub-algebra of $\\tilde{\mathfrak{h}}$ because the Lie bracket is not $\C[\Lambda]-$linear.}
\end{lemma}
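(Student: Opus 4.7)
The plan is to reduce the statement to verifying closure of $\tilde{\mathbf{L}}$ under the bracket $[\cdot,\cdot]_{\tilde{\mathfrak{h}}}$, after which the Lie ring axioms (antisymmetry and the Jacobi identity) are automatic from Lemma \ref{lem:dgLa}. The subtle point flagged in the footnote is that $[\cdot,\cdot]_{\tilde{\mathfrak{h}}}$ is not $\C[\Lambda]$-bilinear: the contributions $A'\langle m',n\rangle$ and $A\langle m,n'\rangle$ in \eqref{eq:Liebracket} depend linearly on the Fourier modes. Hence closure cannot be checked merely on pairs of the distinguished generators $\tilde{\mathfrak{l}}_{\gamma_{ij}}$ and $\tilde{\mathfrak{l}}_\gamma$, but must be verified on all pairs of the form $\mathfrak{w}^\alpha g_1,\ \mathfrak{w}^\beta g_2$ with $\alpha,\beta\in\Lambda$.

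By $\C$-bilinearity, the verification splits into three cases. Case (a), $[\mathfrak{w}^\alpha\tilde{\mathfrak{l}}_{\gamma_{ij}},\mathfrak{w}^\beta\tilde{\mathfrak{l}}_{\gamma_{kl}}]_\sim$: since both summands have trivial derivation part, the bracket reduces to the matrix commutator $[E_{ij},E_{kl}]\mathfrak{w}^{\alpha+\beta+m(\gamma_{ij})+m(\gamma_{kl})}$; using $[E_{ij},E_{kl}]=\delta_{jk}E_{il}-\delta_{li}E_{kj}$ and absorbing the $\Gamma$-cocycle $e_{il}-e_{ij}-e_{jl}$ that arises from the identity $m(\gamma_{ij})+m(\gamma_{jl})=m(\gamma_{ij}+\gamma_{jl})+(e_{il}-e_{ij}-e_{jl})$, the result lies in $\tilde{\mathbf{L}}$. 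Case (b), $[\mathfrak{w}^\alpha\tilde{\mathfrak{l}}_{\gamma_{ij}},\mathfrak{w}^\beta\tilde{\mathfrak{l}}_{\gamma}]_\sim$: the matrix commutator and the $\mathfrak{h}$-bracket both vanish, and only the $-A\langle m,n'\rangle$ term survives, producing the scalar multiple $-\Omega(\gamma)\langle\alpha+m(\gamma_{ij}),n_\gamma\rangle\,\mathfrak{w}^{\alpha+\beta+\gamma}\tilde{\mathfrak{l}}_{\gamma_{ij}}\in\tilde{\mathbf{L}}$.

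Case (c), $[\mathfrak{w}^\alpha\tilde{\mathfrak{l}}_\gamma,\mathfrak{w}^\beta\tilde{\mathfrak{l}}_{\gamma'}]_\sim$, is the essential one: the matrix part vanishes and one computes the bracket of the two derivation pieces to get
\[
\bigl(0,\ \Omega(\gamma)\Omega(\gamma')\,\mathfrak{w}^{\alpha+\beta+\gamma+\gamma'}\,\partial_{\langle\beta+\gamma',n_\gamma\rangle n_{\gamma'}-\langle\alpha+\gamma,n_{\gamma'}\rangle n_\gamma}\bigr).
\]
Since $(\gamma+\gamma')^\perp$ is a one-dimensional sublattice of $\Lambda^*$, the derivation that appears is an integer multiple of $n_{\gamma+\gamma'}$ (with coefficient proportional to the Dirac-type pairing of $\gamma$ and $\gamma'$); when $\Omega(\gamma+\gamma')\neq0$ the output is a scalar multiple of $\mathfrak{w}^{\alpha+\beta}\tilde{\mathfrak{l}}_{\gamma+\gamma'}$ and lies in $\tilde{\mathbf{L}}$.

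The main obstacle I expect is precisely the degenerate subcase of (c) in which $\Omega(\gamma+\gamma')=0$ while the numerical coefficient in the bracket above is nonzero: then the generator $\tilde{\mathfrak{l}}_{\gamma+\gamma'}$ vanishes but the bracket does not, which would prevent closure. Handling this will either require an implicit assumption on the BPS spectrum $\Omega$ (of the type that enters Theorem \ref{thm:homomLiering}), or, alternatively, a reformulation of the module so that the entire derivation factor $(0,c\,\mathfrak{w}^{\gamma+\gamma'}\partial_{n_{\gamma+\gamma'}})$ is allowed regardless of $\Omega(\gamma+\gamma')$. Once this subtlety is resolved, antisymmetry and the Jacobi identity for $[\cdot,\cdot]_{\tilde{\mathfrak{h}}}$ restrict directly to $\tilde{\mathbf{L}}$ and the Lie ring structure follows; the bookkeeping in (a) involving the cocycle $e_{il}-e_{ij}-e_{jl}$ is routine but requires care to match torsor elements consistently.
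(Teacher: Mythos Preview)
Your overall strategy---reduce to closure under $[\cdot,\cdot]_{\tilde{\mathfrak{h}}}$ and split into three cases---is exactly the paper's, and your treatment of cases (a) and (b) is correct. The obstruction you flag in case (c), however, is spurious and arises only because you try to express the output in terms of the wrong generator.

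In case (c) you correctly obtain
\[
\bigl(0,\ \Omega(\gamma)\Omega(\gamma')\,\mathfrak{w}^{\alpha+\beta+\gamma+\gamma'}\,\partial_{\langle\beta+\gamma',n_\gamma\rangle n_{\gamma'}-\langle\alpha+\gamma,n_{\gamma'}\rangle n_\gamma}\bigr),
\]
but then attempt to recognise this as a scalar multiple of $\tilde{\mathfrak{l}}_{\gamma+\gamma'}$, which would require dividing by $\Omega(\gamma+\gamma')$. There is no need to do so: split the derivation into its two summands and absorb each back into one of the \emph{input} generators,
\[
\Omega(\gamma)\langle\beta+\gamma',n_\gamma\rangle\,\mathfrak{w}^{\alpha+\beta+\gamma}\,\tilde{\mathfrak{l}}_{\gamma'}\;-\;\Omega(\gamma')\langle\alpha+\gamma,n_{\gamma'}\rangle\,\mathfrak{w}^{\alpha+\beta+\gamma'}\,\tilde{\mathfrak{l}}_{\gamma}.
\]
This is manifestly a $\C[\Lambda]$-linear combination of $\tilde{\mathfrak{l}}_\gamma$ and $\tilde{\mathfrak{l}}_{\gamma'}$, hence lies in $\tilde{\mathbf{L}}$ with no hypothesis on $\Omega(\gamma+\gamma')$ whatsoever. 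This is precisely how the paper writes the result of case (3). No assumption on the BPS spectrum is needed for this lemma; the condition on $\omega$ in Theorem~\ref{thm:homomLiering} enters only when comparing with $\mathbf{L}_\Gamma$, not for closure of $\tilde{\mathbf{L}}$.
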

The proof of this Lemma is postponed in the appendix (see proof \ref{proof:CLambdamodule}) as well as the proof of the following theorem (see proof \ref{proof:homoLiering}).

\begin{theorem}\label{thm:homomLiering}Let $\left(\mathbf{L}_\Gamma,[\cdot,\cdot]_{\Der(\C[\mathbb{G}])}\right)$ and $\left(\tilde{\mathbf{L}},[\cdot,\cdot]_{\tilde{\mathfrak{h}}}\right)$ be the $\C[\Gamma]$-modules defined before. 
Assume $\omega(\gamma,a)=\Omega(\gamma)\langle{ a},n_{\gamma}\rangle$, then there exists a homomorphism of $\C[\Gamma]$-modules and of Lie rings $\Upsilon\colon \mathbf{L}_\Gamma\to\tilde{\mathbf{L}}$, which is defined as follows:
\begin{equation}
\begin{split}
&\Upsilon(X_{\gamma}\mathfrak{d}_{\gamma_{ij}})\defeq\mathfrak{w}^{\gamma}\left(-E_{ij}\mathfrak{w}^{m(\gamma_{ij})},0\right), \forall i\neq j\in\mathcal{V}, \forall\gamma\in\Gamma;\\
&\Upsilon(X_{\gamma'}\mathfrak{d}_\gamma)\defeq \mathfrak{w}^{\gamma'}\left(0, \Omega(\gamma)\mathfrak{w}^{\gamma}\partial_{n_\gamma}\right), \forall \gamma',\gamma\in\Gamma
\end{split} 
\end{equation} 
\end{theorem}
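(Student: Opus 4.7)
The proof will proceed by verifying three things in order: well-definedness of $\Upsilon$ on $\mathbf{L}_\Gamma$, the $\C[\Gamma]$-module property, and compatibility with the Lie brackets. Since $\mathbf{L}_\Gamma$ is generated over $\C[\Gamma]$ by the $\mathfrak{d}_{e_{ij}}$ (for $i\neq j$) and the $\mathfrak{d}_\gamma$ (for $\gamma\in\Gamma$), I would define $\Upsilon$ on these generators and extend $\C[\Gamma]$-linearly, where the $\C[\Gamma]$-action on $\tilde{\mathbf{L}}$ is induced by the natural identification $X_{\gamma'}\mapsto\mathfrak{w}^{\gamma'}$. Well-definedness then amounts to checking that the relation $X_\gamma\mathfrak{d}_{\gamma_{ij}}=\sigma(\gamma,\gamma_{ij})\mathfrak{d}_{\gamma+\gamma_{ij}}$ inherited from $\Der(\C[\mathbb{G}])$ is consistent with the explicit formula for $\Upsilon$; this is immediate from the additivity $m(\gamma+\gamma_{ij})=\gamma+m(\gamma_{ij})$, once the twist $\sigma$ is absorbed into the $\C[\Gamma]\cong\C[\Lambda]$ isomorphism.

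The bulk of the argument is the bracket compatibility, which splits into three cases. For the $K$-$K$ case $[\mathfrak{d}_\gamma,\mathfrak{d}_{\gamma'}]_{\Der(\C[\mathbb{G}])}$ with $\gamma,\gamma'\in\Gamma$, I would compute both sides directly and observe that, after invoking the assumption $\omega(\gamma,a)=\Omega(\gamma)\langle a,n_\gamma\rangle$, the outcome reduces to the classical tropical vertex bracket, living entirely in the $T\mathcal{L}$ component of $\tilde{\mathfrak{h}}$; this essentially recovers the correspondence between $\mathfrak{h}$ and the pure $K$-type generators already implicit in \cite{MCscattering}.

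For the $S$-$S$ case $[\mathfrak{d}_{\gamma_{ij}},\mathfrak{d}_{\gamma_{kl}}]_{\Der(\C[\mathbb{G}])}$, the Jacobi identity gives $\ad_{[X_{\gamma_{ij}},X_{\gamma_{kl}}]}$; the underlying groupoid commutator is non-zero only when the indices satisfy $j=k$ or $l=i$, which matches the matrix commutator $[E_{ij},E_{kl}]_{\mathfrak{gl}}=\delta_{jk}E_{il}-\delta_{il}E_{kj}$ produced on the $\tilde{\mathbf{L}}$ side by the $[A,A']_{\mathfrak{gl}}$ term of \eqref{eq:Liebracket}. Tracking the Fourier frequencies through $m(\gamma_{ij}+\gamma_{kl})=m(\gamma_{ij})+m(\gamma_{kl})$ (up to a $\sigma$-twist) then shows the two sides agree.

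The main obstacle, and technically the most interesting case, is the mixed $S$-$K$ bracket $[\mathfrak{d}_{\gamma_{ij}},\mathfrak{d}_\gamma]_{\Der(\C[\mathbb{G}])}$. Using that $X_\gamma$ is central in $\C[\mathbb{G}]$ (so $[X_{\gamma_{ij}},X_\gamma]=0$) together with additivity of $\omega(\gamma,\cdot)$, a short Leibniz-rule computation gives
\[
[\mathfrak{d}_{\gamma_{ij}},\mathfrak{d}_\gamma]=-\omega(\gamma,\gamma_{ij})\,X_\gamma\mathfrak{d}_{\gamma_{ij}}.
\]
On the $\tilde{\mathfrak{h}}$ side, the bracket of $(-E_{ij}\mathfrak{w}^{m(\gamma_{ij})},0)$ with $(0,\Omega(\gamma)\mathfrak{w}^\gamma\partial_{n_\gamma})$ is controlled precisely by the cross term $A'\langle m',n\rangle - A\langle m,n'\rangle$ of \eqref{eq:Liebracket}, and evaluates to $\Omega(\gamma)\langle m(\gamma_{ij}),n_\gamma\rangle\,(E_{ij}\mathfrak{w}^{\gamma+m(\gamma_{ij})},0)$. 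Matching the two sides via $\Upsilon$ requires exactly the BPS assumption $\omega(\gamma,\gamma_{ij})=\Omega(\gamma)\langle m(\gamma_{ij}),n_\gamma\rangle$; this is the point at which the "nontrivial interaction" between $S$- and $K$-type automorphisms observed in \cite{WCF2d-4d} is realized geometrically through the coupled matrix/vector-field structure of $\tilde{\mathfrak{h}}$.
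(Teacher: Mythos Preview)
Your approach is essentially the same as the paper's: both verify compatibility of $\Upsilon$ with the Lie brackets by splitting into the three cases $S$--$S$, $S$--$K$, $K$--$K$ and computing each side directly, with the assumption $\omega(\gamma,a)=\Omega(\gamma)\langle a,n_\gamma\rangle$ used precisely at the $S$--$K$ and $K$--$K$ steps. One point to tighten: since neither bracket is $\C[\Gamma]$-bilinear (as the paper itself remarks), checking only the bare generators $\mathfrak{d}_{\gamma_{ij}}$, $\mathfrak{d}_\gamma$ does not a priori suffice; the paper carries the prefactors $X_\gamma$, $X_{\gamma'}$ through each case explicitly, and in particular for the $K$-type side $X_{\gamma'}\mathfrak{d}_\gamma$ is \emph{not} proportional to any $\mathfrak{d}_{\gamma''}$, so the shift $\gamma''+\gamma'''$ (rather than just $\gamma'''$) genuinely enters the pairing. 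Your computations extend to this general case without new ideas, but you should say so rather than reduce silently to the bare generators.
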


\begin{remark}
The assumption on $\omega$ is compatible with its Definition \eqref{eq:autK}, indeed by linearity of the pairing $\langle\cdot,\cdot\rangle$, 
\[\omega(\gamma,a+b)=\Omega(\gamma)\langle {a+b},n_\gamma\rangle=\Omega(\gamma)\langle{a},n_\gamma\rangle+\Omega(\gamma)\langle {b},n_\gamma\rangle=\omega(\gamma,a)+\omega(\gamma, b).\] 
Moreover notice that by the assumption on $\omega$, $\mathbf{L}_\Gamma$ turns out to be the $\C[\Gamma]$-module generated by $\mathfrak{d}_{e_{ij}}$ for every $i\neq j\in\mathcal{V}$ and by $\mathfrak{d}_{\gamma}$ for every primitive $\gamma\in\Gamma$. Indeed if $\gamma'$ is not primitive, then there exists a $\gamma\in\Gamma_{\text{prim}}$ such that $\gamma'=k\gamma$. Hence $\mathfrak{d}_{k\gamma}=\omega(k\gamma,\cdot)X_{\gamma}^{(k-1)}X_\gamma=CX_{(k-1)\gamma}\mathfrak{d}_{\gamma}$, where $C=\frac{k\Omega(k\gamma)}{\Omega(\gamma)}$.  In particular, if $\gamma,\gamma'$ are primitive vectors in $\Gamma$, then $\omega(\gamma,\gamma')=\Omega(\gamma)\langle {\gamma'},n_{\gamma}\rangle=\Omega(\gamma)\langle \gamma,\gamma'\rangle_D$.     
\end{remark}

Let us now show which is the correspondence between WCFs in coupled $2d$-$4d$ systems and scattering diagrams which come from solutions of the Maurer-Cartan equation for deformations of holomorphic pairs:
\begin{enumerate}
    \item to every BPS ray $l_a=Z(a)\R_{>0}$ we associate a ray $P_{a}=m(a)\R_{>0}$ if either $\mu(a)\neq\mu(a)'$ or $\omega(a,\cdot)\neq\omega(a,\cdot)'$. Conversely we associate a line $P_a=m(a)\R$;
    \item to the automorphism $S_{\gamma_{ij}}^\mu$ we associate an automorphism $\theta_S\in\exp(\tilde{\mathfrak{h}})$ such that $\log(\theta_S)=\Upsilon(\mathfrak{s}_{\gamma_{ij}})=\left(-\mu(\gamma_{ij})tE_{ij}\mathfrak{w}^{m(\gamma_{ij})},0\right)$;
\item to the automorphism $K_\gamma^\omega$ we associate an automorphism $\theta_K\in\exp(\tilde{\mathfrak{h}})$ such that $\log(\theta_K)=\Upsilon(\mathfrak{k}_\gamma)=\left(0,\Omega(\gamma)\sum_l\frac{1}{l}t^l\mathfrak{w}^{l\gamma}\partial_{n_{\gamma}}\right)$. 
\end{enumerate}
\begin{remark}
If $m(\gamma_{ij})=m(\gamma_{il}+\gamma_{lj})$ then $\log(\theta_S)=\Upsilon(\mathfrak{s}_{\gamma_{ij}})=\Upsilon(\mathfrak{s}_{\gamma_{il}})\Upsilon(\mathfrak{s}_{\gamma_{lj}})$. Analogously if $m(\gamma_{ij}')=m(\gamma_{ij})+k\gamma$ then $\log(\theta_S')=\Upsilon(\mathfrak{s}_{\gamma_{ij}'})=t^k\Upsilon(\mathfrak{s}_{\gamma_{ij}})$. 
\end{remark}
In the following examples we will show this correspondence in practice: we consider two examples of WCFs computed in \cite{WCF2d-4d} and we construct the corresponding consistent scattering diagram. 

\subsubsection{Example 1}\label{Ex:1}
Let $\mathcal{V}=\lbrace i,j,k=l\rbrace$ and set $\gamma_{kk}=\gamma\in\Gamma$. Assume $\omega(\gamma,\gamma_{ij})=-1$ and $\mu(\gamma_{ij})=1$, then the wall-crossing formula (equation 2.39 in \cite{WCF2d-4d}) is 
\begin{equation}
    K_\gamma^\omega S_{\gamma_{ij}}^\mu=S_{\gamma_{ij}}^{\mu'}S_{\gamma_{ij}+\gamma}^{\mu'}K_\gamma^{\omega'} 
\end{equation}
with $\mu'(\gamma_{ij})=1$, $\mu'(\gamma+\gamma_{ij})=-1$ and $\omega'=\omega$.

Since $\mu'(\gamma_{ij})=\mu(\gamma_{ij})$ and $\omega'=\omega$ the initial scattering diagram has two lines. In addition, since $-1=\omega(\gamma,\gamma_{ij})=\Omega(\gamma)\langle m(\gamma_{ij}), n_\gamma\rangle$, we can assume $\Omega(\gamma)=1$, $m(\gamma_{ij})=(1,0)$ and $\gamma=(0,1)$. Therefore the initial scattering diagram is   
\[
\mathfrak{D}=\left\lbrace\mathsf{w}_S=\left(m_S=m(\gamma_{ij}), P_S, \theta_S\right),\mathsf{w}_K=\left(m_K=\gamma, P_K, \theta_K\right) \right\rbrace\] where $\log\theta_S=\left(-tE_{ij}\mathfrak{w}^{m(\gamma_{ij})}, 0\right)$ and $\log\theta_K=\left(0, \sum_{l\geq 1} \frac{1}{l}t^l \mathfrak{w}^{l \gamma}\partial_{n_\gamma}\right)$. Then the wall crossing formula says that the complete scattering diagram $\mathfrak{D}_\infty$ has one more S-ray, $P_{S+K}=(\gamma+m(\gamma_{ij}))\R_{\geq 0}$ and wall-crossing factor $\log\theta_{S+K}={\left(t^2E_{ij}\mathfrak{w}^{\gamma+m(\gamma_{ij})},0\right)}$. 
\begin{figure}[h!]
    \centering
    \begin{tikzpicture}
    \draw (0,2) -- (4,2);
    \draw (2,0) -- (2,4);
    \draw[red](2,2) -- (4,4);
    \node[font=\tiny, below right] at (2,2) {0};
    \node[font=\tiny, below] at (4,2) {$\theta_S$};
    \node[font=\tiny, above] at (2,4) {$\theta_{K}$};
    \node[font=\tiny, below] at (2,0) {$\theta_{K}^{-1}$};
    \node[font=\tiny, below left] at (0,2) {$\theta_S^{-1}$};
    \node[font=\tiny, above right] at (4,4) {$\theta_{S+K}$};
    \end{tikzpicture}
    \caption{The complete scattering diagram with K and S rays.}
    \label{fig:es2}
\end{figure}
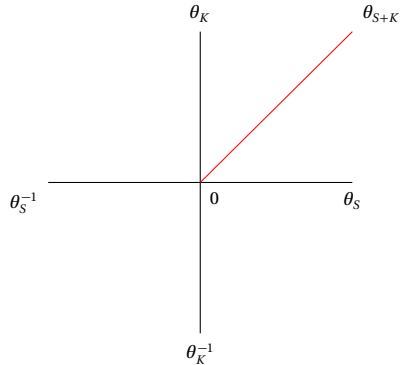

We can check that $\mathfrak{D}_\infty$ is consistent (see Definition \ref{def:pathorderedprod}). In particular we need to prove the following identity:
\begin{equation}
\theta_{K}\circ\theta_S\circ\theta_{K^{-1}}=\theta_S\circ\theta_{S+K}
\end{equation}
\begin{equation*}
\begin{split}
\text{RHS}&=\theta_S\circ\theta_{S+K}\\
&=\exp(\log\theta_S)\circ\exp(\log\theta_{S+K})\\
&=\exp(\log\theta_S{\bullet}_{\text{BCH}}\log\theta_{S+K})\\
&=\exp\left(\log\theta_S+\log\theta_{S+K}\right)\\
\text{LHS}&=\theta_{K}\circ\theta_S\circ\theta_{K^{-1}}\\
&=\exp(\log\theta_K\bullet_{\text{BCH}}\log\theta_S\bullet_{\text{BCH}}\log\theta_{K^{-1}})\\
&=\exp\left(\log\theta_S+\sum_{l\geq1}\frac{1}{l!}\ad_{\log\theta_K}^l\log\theta_S\right)\\
&=\exp\left(\log\theta_S+[\log\theta_K,\log\theta_s]+\sum_{l\geq2}\frac{1}{l!}\ad_{\log\theta_K}^l\log\theta_S\right)\\
&=\exp\left(\log\theta_S-\left(E_{ij}\sum_{k\geq1}\frac{1}{k}\mathfrak{w}^{m(\gamma_{ij})+k\gamma}\langle m(\gamma_{ij}), n_{\gamma}\rangle,0\right)+\sum_{l\geq2}\frac{1}{l!}\ad_{\log\theta_K}^l\log\theta_S\right)\\
&=\exp\left(\log\theta_S+\left(t^2E_{ij}\mathfrak{w}^{m(\gamma_{ij})+\gamma},0\right)+\left(E_{ij}\sum_{k\geq2}\frac{1}{k}t^{k+1}\mathfrak{w}^{m(\gamma_{ij})+k\gamma},0\right)+\sum_{l\geq2}\frac{1}{l!}\ad_{\log\theta_K}^l\log\theta_S\right).
\end{split}
\end{equation*}
We claim that 
\begin{equation}\label{comput:claim}
-\left(E_{ij}\sum_{k\geq2}\frac{1}{k}t^{k+1}\mathfrak{w}^{m(\gamma_{ij})+k\gamma},0\right)=\sum_{l\geq2}\frac{1}{l!}\ad_{\log\theta_K}^l\log\theta_S
\end{equation}
and we compute it explicitly:
\begin{equation}\label{comput:ex1}
\begin{split}
\sum_{l\geq2}\frac{1}{l!}\ad_{\log\theta_K}^l\log\theta_S&=\sum_{l\geq2}\frac{1}{l!}\left(-tE_{ij}(-1)^l\sum_{k_1,\cdots,k_l\geq1}\frac{1}{k_1\cdots k_l}t^{k_1+\cdots+k_l}\mathfrak{w}^{(k_1+\cdots+k_l)\gamma+m(\gamma_{ij})},0\right)\\
&=-\sum_{l\geq2}\frac{(-1)^l}{l!}\left(tE_{ij}\mathfrak{w}^{m(\gamma_{ij})}\left(\sum_{k_1,\cdots,k_l\geq1}\frac{1}{k_1\cdots k_l}t^{k_1+\cdots+k_l}\mathfrak{w}^{(k_1+\cdots+k_l)\gamma}\right),0\right)\\
&=-\sum_{l\geq2}\frac{(-1)^l}{l!}\left(tE_{ij}\mathfrak{w}^{m(\gamma_{ij})}\left(\sum_{k\geq1}\frac{1}{k}t^k\mathfrak{w}^{k\gamma}\right)^l,0\right)\\
&=-\left(tE_{ij}\mathfrak{w}^{m(\gamma_{ij})}\sum_{l\geq2}\frac{(-1)^l}{l!}\left(\sum_{k\geq 1}\frac{1}{k}t^k\mathfrak{w}^{k\gamma}\right)^l,0\right)\\
&=-\left(tE_{ij}\mathfrak{w}^{m(\gamma_{ij})}\left(\exp\left(-\sum_{k\geq1}\frac{1}{k}t^k\mathfrak{w}^{k\gamma}\right)+\sum_{k\geq1}\frac{1}{k}t^k\mathfrak{w}^{k\gamma}-1\right),0\right)\\
&=-\left(tE_{ij}\mathfrak{w}^{m(\gamma_{ij})}\left( \left(1-t\mathfrak{w}^{\gamma}\right)+\sum_{k\geq 1}\frac{1}{k}t^k\mathfrak{w}^{k\gamma}-1\right),0\right)\\
&=-\left(tE_{ij}\mathfrak{w}^{m(\gamma_{ij})} \sum_{k\geq 2}\frac{1}{k}t^k\mathfrak{w}^{k\gamma},0\right).
\end{split}
\end{equation}

\subsubsection{Example 2}\label{Ex:2}
Finally let us give a example with only $S$-rays: assume $\mathcal{V}={i=l,j=k}$, then the wall-crossing formula (equation (2.35) in \cite{WCF2d-4d}) is
\begin{equation}
S_{\gamma_{ij}}^\mu S_{\gamma_{il}}^\mu S_{\gamma_{jl}}^\mu=S_{\gamma_{jl}}^\mu S_{\gamma_{il}}^{\mu'} S_{\gamma_{ij}}^\mu
\end{equation}
with $\gamma_{il}\defeq\gamma_{ij}+\gamma_{jl}$ and $\mu'(\gamma_{il})=\mu(\gamma_{il})-\mu(\gamma_{ij})\mu(\gamma_{jl})$. Let us further assume that $\mu(il)=0$, then the associated initial scattering has two lines:
\begin{equation*}
\mathfrak{D}=\left\lbrace\mathsf{w}_1=\left(m_1=m(\gamma_{ij}),P_1=m_1+\R,\theta_{S_1}\right), \mathsf{w}_2=\left\lbrace m_2=m(\gamma_{jk}), P_2=m_2\R, \theta_{S_2}\right\rbrace\right\rbrace
\end{equation*}
with \begin{equation*}
\begin{split}
\log\theta_{S_1}&=-\left(\mu(\gamma_{ij})tE_{ij}\mathfrak{w}^{m(\gamma_{ij})},0\right)\\
\log\theta_{S_2}&=-\left(\mu(\gamma_{jl})tE_{jl}\mathfrak{w}^{m(\gamma_{jl})},0\right).\\
\end{split}
\end{equation*}
\begin{figure}[h]
    \centering
    \begin{tikzpicture}
    \draw (0,1) -- (4,3);
    \draw (2,0) -- (2,4);
    \draw [red](2,2) -- (3,3.5);
    \node[font=\tiny, below right] at (2,2) {0};
    \node[font=\tiny, below] at (4,3.5) {$\theta_{S_1}$};
    \node[font=\tiny, above] at (2,4) {$\theta_{S_2}$};
    \node[font=\tiny, below] at (2,0) {$\theta_{S_2}^{-1}$};
    \node[font=\tiny, below left] at (0,1.5) {$\theta_{S_1}^{-1}$};
    \node[font=\tiny, above right] at (3,3.5) {$\theta_{S_3}'$};
    \end{tikzpicture}
    \caption{The complete scattering diagram with only S rays.}
    \label{fig:es3}
\end{figure}
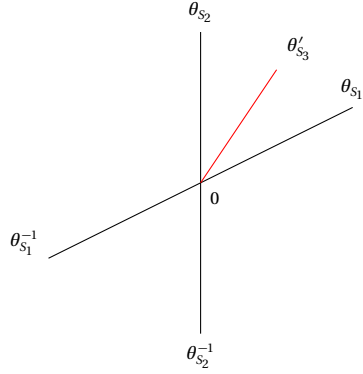 

Its completion has one more ray $P_3'=(m_1+m_2)\R_{\geq0}$ decorated with the automorphism $\theta_{3}'$ such that  
\[\log\theta_3'=\left(\mu(\gamma_{ij})\mu(\gamma_{jl})t^2E_{il}\mathfrak{w}^{m(\gamma_{il})},0\right).\] 
 
Since the path order product involves matrix commutators, the consistency of $\mathfrak{D}_\infty$ can be easily verified. 

\begin{remark}
In the latter example we assume $\mu(\gamma_{il})=0$ in order to have an initial scattering diagram with only two rays, as in our construction of solution of Maurer-Cartan equation in Section \ref{sec:two_walls}. However the general formula can be computed with the same rules, by adding a wall $\mathsf{w}=\left(-(m_1+m_2),\log\theta_3=\left(-\mu(\gamma_{il})t^2E_{il}\mathfrak{w}(\gamma_{il})\right),0\right)$ to the initial scattering diagram.   
\end{remark}

\newpage
\appendix
\section{Computations}

We collect some proofs.

\begin{lemma}[Lemma \ref{lem:dgLa}]\label{proof:dgLa}
$\Big(\tilde{\mathfrak{h}} , [\cdot ,\cdot]_{\sim}\Big)$ is a Lie algebra, where the bracket $[\cdot,\cdot]_{\sim}$ is defined by: 
\begin{equation*}
[(A,\partial_n)z^m , (A',\partial_{n'})z^{m'} ]_{\sim}\defeq([A,A']_{\mathfrak{gl}}z^{m+m'}+A'\langle m',n\rangle z^{m+m'}- A\langle m,n'\rangle z^{m+m'}, [z^m\partial_n,z^{m'}\partial_{n'}]_{\mathfrak{h}} ).
\end{equation*}
\end{lemma}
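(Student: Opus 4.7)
The plan is to verify the three Lie-algebra axioms for $[\cdot,\cdot]_\sim$: bilinearity, antisymmetry, and the Jacobi identity. Bilinearity is immediate from the definition, since each term in the formula is polynomial in $A,A',n,n'$. For antisymmetry, observe that under the exchange of arguments the $\mathfrak{gl}$-component sends
\[
[A,A']_{\mathfrak{gl}} + A'\langle m',n\rangle - A\langle m,n'\rangle \longmapsto [A',A]_{\mathfrak{gl}} + A\langle m,n'\rangle - A'\langle m',n\rangle,
\]
which is its negative, and the $\mathfrak{h}$-component is antisymmetric because $\mathfrak{h}$ is already a Lie algebra.

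The main task is Jacobi. Set $X_i=(A_i,\partial_{n_i})z^{m_i}$ for $i=1,2,3$ and write $B_{ij}$ for the $\mathfrak{gl}$-component and $n_{ij}=\langle m_j,n_i\rangle n_j-\langle m_i,n_j\rangle n_i$ for the derivation part of $[X_i,X_j]_\sim$. The second ($\mathfrak{h}$) component of the cyclic sum vanishes by Jacobi in $\mathfrak{h}$, so the only thing to check is that the first component of $[[X_1,X_2]_\sim,X_3]_\sim+\text{cyclic}$ is zero. Expanding, this component splits into three types of terms:
\begin{enumerate}
\item iterated commutators $[[A_i,A_j],A_k]$;
\item single commutators $[A_i,A_j]$ multiplied by scalar pairings;
\item single matrices $A_k$ multiplied by products of two scalar pairings.
\end{enumerate}

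I would handle these three types separately. Type (1) sums to zero by the Jacobi identity of $\mathfrak{gl}(r,\C)$. For type (2), the coefficient of $[A_1,A_2]$ collected across the three cyclic terms is
\[
-\langle m_1+m_2,n_3\rangle+\langle m_2,n_3\rangle+\langle m_1,n_3\rangle = 0,
\]
using antisymmetry of the matrix commutator to convert $[A_2,A_1]$ into $-[A_1,A_2]$; the coefficients of $[A_2,A_3]$ and $[A_3,A_1]$ vanish by the same calculation after cyclic relabeling. For type (3), the coefficient of $A_1$ is a sum of products of two scalar pairings in $\langle \cdot ,\cdot\rangle$; inspection shows these pair up as $\langle m_1,n_2\rangle\langle m_1,n_3\rangle-\langle m_1,n_3\rangle\langle m_1,n_2\rangle$ and two similar pairs that cancel by the commutativity of scalar multiplication. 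Again, cyclic symmetry reduces the $A_2$ and $A_3$ coefficients to the same check.

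The calculation is essentially bookkeeping, and the main (minor) obstacle is organizing the expansion so that the three cancellation mechanisms — Jacobi in $\mathfrak{gl}$, linearity of the pairing $\langle\cdot,\cdot\rangle$, and commutativity of scalars — are clearly separated. As a conceptual sanity check (not needed for the proof itself, but illuminating), the bracket $[\cdot,\cdot]_\sim$ agrees up to the factor $2\pi i$ with the $\hbar\to 0$ limit of $\{\cdot,\cdot\}_\sim$ on $\mathcal{G}(U)$ computed in Section \ref{subsec:relation}, and the latter comes from the Lie bracket on $\KS(\check X,E)$ via the Fourier transform; so Jacobi is inherited from a genuine dgLa structure.
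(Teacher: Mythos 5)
Your proposal follows the same route as the paper's own proof: a direct check of antisymmetry and then of the Jacobi identity by expanding the cyclic sum, noting that the $\mathfrak{h}$-component is inherited from $\mathfrak{h}$ and organizing the $\mathfrak{gl}$-component into iterated commutators (handled by Jacobi in $\mathfrak{gl}$), single-commutator terms with pairing coefficients, and single-matrix terms with products of two pairings. Your explicit bookkeeping of the coefficients of $[A_1,A_2]$ and $A_1$ is correct and matches the cancellation pattern the paper records (the paper's appendix has a couple of sign typos in the displayed coefficients, but the intended computation is identical to yours).
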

\begin{proof}
First of all the the bracket is antisymmetric: 
\begin{equation*}
\begin{split}
&[(A,\partial_n)z^m , (A',\partial_{n'})z^{m'} ]_{\sim}=([A,A']_{\mathfrak{gl}}z^{m+m'}+A'\langle m',n\rangle z^{m+m'}- A\langle m,n'\rangle z^{m+m'}, [z^m\partial_n,z^{m'}\partial_{n'}]_{\mathfrak{h}} )\\
&=(-[A',A]_{\mathfrak{gl}}z^{m+m'}+A'\langle m',n\rangle z^{m+m'}- A\langle m,n'\rangle z^{m+m'}, -[z^{m'}\partial_{n'},z^{m}\partial_{n}]_{\mathfrak{h}} )\\
&=-([A',A]_{\mathfrak{gl}}z^{m+m'}+A\langle m,n'\rangle z^{m+m'}- A'\langle m',n\rangle z^{m+m'}, [z^{m'}\partial_{n'},z^{m}\partial_{n}]_{\mathfrak{h}} ).
\end{split}
\end{equation*}
Moreover the Jacobi identity is satisfied: 
\begin{equation*}
\begin{split}
&\big[[(A_1,\partial_{n_1})z^{m_1} , (A_2,\partial_{n_2})z^{m_2} ]_{\sim},(A_3,\partial_{n_3})z^{m_3} \big]_{\sim}\\
&=\big[\big([A_1,A_2]_{\mathfrak{gl}}+A_2\langle m_2,n_1\rangle-A_1\langle m_1,n_2\rangle, \partial_{\langle m_2,n_1\rangle n_2-\langle m_1,n_2\rangle n_1}\big)z^{m_1+m_2}, (A_3,\partial_{n_3})z^{m_3}\big]_{\sim}\\
&=\Big(\big[([A_1,A_2]_{\mathfrak{gl}}+A_2\langle m_2, n_1\rangle -A_1\langle m_1,n_2\rangle),A_3\big]_{\mathfrak{gl}}+A_3\langle m_2,n_1\rangle\langle m_3,n_2\rangle -A_3\langle m_1,n_2\rangle\langle m_3,n_1\rangle +\\
&\quad-\big([A_1,A_2]+A_2\langle m_2, n_1\rangle -A_1\langle m_1, n_2\rangle\big)\langle m_1+ m_2,n_3\rangle, (m_1+m_2+m_3)^{\perp})\Big)z^{m_1+m_2+m_3}\\
&=\Big([[A_1,A_2]_{\mathfrak{gl}}, A_3]_{\mathfrak{gl}}+[A_2,A_3]_{\mathfrak{gl}}\langle m_2,n_1\rangle -\langle m_1, n_2\rangle [A_1,A_3]_{\mathfrak{gl}}+\\
&\quad-[A_1,A_2]_{\mathfrak{gl}}\langle m_1+m_2,n_3\rangle+A_3\langle m_2,n_1\rangle\langle m_3,n_2\rangle+\\
&\quad-A_3\langle m_1,n_2\rangle\langle m_3,n_1\rangle-A_2\langle m_2,n_1\rangle\langle m_1+m_2,n_3\rangle+\\
&\quad+A_1\langle m_1,n_2\rangle\langle m_1+m_2,n_3\rangle ,(m_1+m_2+m_3)^{\perp}\Big)z^{m_1+m_2+m_3}
\end{split}
\end{equation*} 
Then by cyclic permutation we have 
\begin{equation*}
\begin{split}
&\big[[(A_2,\partial_{n_2})z^{m_2}, (A_3,\partial_{n_3})z^{m_3} ]_{\sim},(A_1,\partial_{n_1})z^{m_1}\big]_{\sim}=\\
&=\Big(\big[[A_2,A_3]_{\mathfrak{gl}}, A_1]\big]+[A_3,A_1]_{\mathfrak{gl}}\langle m_3,n_2\rangle -\langle m_2,n_3\rangle [A_2,A_1]_{\mathfrak{gl}}+\\
&-[A_2,A_3]_{\mathfrak{gl}}\langle m_3+m_2,n_1\rangle+A_1\langle m_3,n_2\rangle\langle m_1,n_3\rangle+\\
&-A_1\langle m_2,n_3\rangle\langle m_1,n_2\rangle-A_3\langle m_3,n_2\rangle\langle m_2+m_3, n_1\rangle+\\
&+A_2\langle m_2,n_3\rangle\langle m_2+m_3, n_1\rangle ,(m_1+m_2+m_3)^{\perp}\Big)z^{m_1+m_2+m_3}
\end{split}
\end{equation*} 
and also 
\begin{equation*}
\begin{split}
&\big[[(A_1,\partial_{n_3})z^{m_3}, (A_1,\partial_{n_1})z^{m_1} ]_{\sim},(A_2,\partial_{n_2})z^{m_2} \big]_{\sim}=\\
&=\Big(\big[[A_3,A_1]_{\mathfrak{gl}}, A_2]\big]+[A_1,A_2]\langle m_1,n_3\rangle -\langle m_3,n_1\rangle [A_3,A_2]_{\mathfrak{gl}}+\\
&-[A_3,A_1]_{\mathfrak{gl}}\langle m_1+m_3, n_2\rangle+A_2\langle m_1,n_3\rangle\langle m_2,n_1\rangle+\\
&-A_2\langle m_3,n_1\rangle\langle m_2,n_3\rangle-A_1\langle m_1,n_3\rangle\langle m_3+m_1,n_2\rangle+\\
&+A_3\langle m_3, n_1\rangle\langle m_3+m_1,n_2\rangle ,(m_1+m_2+m_3)^{\perp}\Big)z^{m_1+m_2+m_3}
\end{split}
\end{equation*}
Since Jacobi identity holds for $[\cdot,\cdot]_{\mathfrak{gl}}$ and $[\cdot,\cdot]_{\mathfrak{h}}$, we are left to check that the remaining terms sum to zero. Indeed the coefficient of $[A_2,A_3]_{\mathfrak{gl}}$ is $\langle n_1,m_2\rangle-\langle n_1,m_2+m_3\rangle-\langle n_1,m_3\rangle$, and it is zero. By permuting the indexes, the same hold true for the coefficients in front of the other bracket $[A_1,A_3]_{\mathfrak{gl}}$ and $[A_2,A_1]_{\mathfrak{gl}}$. In addition the coefficient of $A_3$ is $\langle m_2,n_1\rangle\langle m_3,n_2\rangle-\langle m_1,n_2\rangle\langle m_3,n_1\rangle-\langle m_3,n_2\rangle\langle m_2,n_1\rangle-\langle m_3,n_2\rangle\langle m_3,n_1\rangle+ \langle m_3,n_1\rangle\langle m_1,n_2\rangle+\langle m_3,n_1\rangle\langle m_3,n_2\rangle$ and it is zero. By permuting the indexes the same holds true for the coefficient in front of $A_1$ and $A_2$.  
\end{proof}

\begin{lemma}[Lemma \ref{lem:CGamma_module}]\label{proof:CGamma_module}
Let $\mathbf{L}_\Gamma$ be the $\C[\Gamma]-$module defined above. Then, it is a Lie ring with the the Lie bracket $[\cdot, \cdot]_{\Der(\C[\mathbb{G}])}$ induced by $\Der(\C[\mathbb{G}])$.  
\end{lemma}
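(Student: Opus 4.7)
The plan is to realise $\mathbf{L}_\Gamma$ as a subset of the associative algebra $\End_\C(\C[\mathbb{G}])$ (indeed of $\Der(\C[\mathbb{G}])$), so that antisymmetry and the Jacobi identity for the commutator bracket $[D_1,D_2]=D_1 D_2 - D_2 D_1$ come for free from the ambient structure; the only point requiring work is closure of $\mathbf{L}_\Gamma$ under this bracket.

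The first preliminary I would record is that, by property (ii) of the twisting function $\sigma$, one has $X_\gamma X_a = X_a X_\gamma$ for all $\gamma \in \Gamma$ and $a \in \mathbb{G}$, so that $\C[\Gamma]$ is both commutative and central in $\C[\mathbb{G}]$. Two consequences follow: first, multiplication by $X_\eta \in \C[\Gamma]$ sends derivations to derivations, hence every element of $\mathbf{L}_\Gamma$ is a derivation of $\C[\mathbb{G}]$; second, the Leibniz-type identity
\[
[X_{\eta_1}D_1,\; X_{\eta_2}D_2] \;=\; X_{\eta_1}\,D_1(X_{\eta_2})\,D_2 \;-\; X_{\eta_2}\,D_2(X_{\eta_1})\,D_1 \;+\; X_{\eta_1}X_{\eta_2}\,[D_1,D_2]
\]
holds in $\End_\C(\C[\mathbb{G}])$. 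When $D_i$ is one of the generators $\mathfrak{d}_{\gamma_{pq}}$ or $\mathfrak{d}_\gamma$, the value $D_i(X_{\eta_j})$ is again an element of $\C[\Gamma]$ (respectively zero by centrality, or $\omega(\gamma,\eta_j)X_{\gamma+\eta_j}$), so the first two summands already lie in $\mathbf{L}_\Gamma$. Thus closure reduces to showing $[D_1,D_2]\in\mathbf{L}_\Gamma$ for each of the three possible pairs of generators.

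I would then dispatch these three cases in turn. For the SS-bracket, the identity $[\ad_X,\ad_Y]=\ad_{[X,Y]}$ in any associative algebra reduces the question to computing $[X_{\gamma_{ij}}, X_{\gamma_{kl}}]$: this vanishes unless $j=k$, in which case it equals $\pm X_{\gamma_{il}}$, or $l=i$, in which case it equals $\pm X_{\gamma_{kj}}$, so the bracket is $\pm \mathfrak{d}_{\gamma_{il}}$ or $\pm \mathfrak{d}_{\gamma_{kj}}$ up to a $\sigma$-sign. For the KK-bracket, a direct expansion together with the commutativity of $\C[\Gamma]$ collapses the cross terms and gives
\[
[\mathfrak{d}_\gamma,\mathfrak{d}_{\gamma'}] \;=\; \omega(\gamma,\gamma')\,X_\gamma\,\mathfrak{d}_{\gamma'} \;-\; \omega(\gamma',\gamma)\,X_{\gamma'}\,\mathfrak{d}_\gamma,
\]
which is manifestly in $\mathbf{L}_\Gamma$. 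For the mixed SK-bracket $[\mathfrak{d}_{\gamma_{ij}},\mathfrak{d}_\gamma]$, I would expand both compositions, push $X_\gamma$ past $X_{\gamma_{ij}}$ using centrality, and verify that after cancellation the result is a $\C[\Gamma]$-linear combination of the $\mathfrak{d}_{\gamma_{pq}}$'s.

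The principal obstacle is purely combinatorial rather than conceptual: the twisting $\sigma$ appears in every product involving variables indexed by $\Gamma_{ij}$, and one must normalise each expression carefully to ensure that the coefficients that emerge genuinely assemble into the patterns $\omega(\gamma,\cdot)X_\gamma$ or $\ad_{X_{\gamma_{pq}}}$ that span $\mathbf{L}_\Gamma$. Beyond this bookkeeping there is nothing further to check: antisymmetry and the Jacobi identity are inherited at no cost from the commutator bracket on $\End_\C(\C[\mathbb{G}])$, which is exactly why the statement is a Lie ring rather than a Lie algebra (the $\C[\Gamma]$-linearity fails because of the nontrivial $D_1(X_{\eta_2})$ term in the Leibniz identity above).
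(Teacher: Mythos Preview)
Your proposal is correct and follows essentially the same approach as the paper: both reduce the statement to verifying closure of $\mathbf{L}_\Gamma$ under the ambient commutator bracket, and both check this on the three pairs of generator types. Your use of the Leibniz identity to strip off the $\C[\Gamma]$-coefficients first, and of $[\ad_X,\ad_Y]=\ad_{[X,Y]}$ for the SS-case, are modest organisational streamlinings, but the substance is the same as the paper's direct computations of cases $(1)$--$(3)$.
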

\begin{proof}
It is enough to prove that $\mathbf{L}_\Gamma$ is a Lie sub-algebra of $\Der(\C[\mathbb{G}])$, i.e. it is closed under $[\cdot,\cdot]_{\Der(\C[\mathbb{G}])}$. By $\C$-linearity it is enough to prove the following claims:
\begin{itemize}
    \item[$(1)$] $[X_{\gamma}\mathfrak{d}_{\gamma_{ij}}, X_{\gamma'}\mathfrak{d}_{\gamma_{kl}}]\in \mathbf{L}_\Gamma$: indeed 
    \begin{equation*}
        \begin{split}
        [X_{\gamma}\mathfrak{d}_{\gamma_{ij}}, X_{\gamma'}\mathfrak{d}_{\gamma_{kl}}]&=X_{\gamma}\mathsf{ad}_{\gamma_{ij}}(X_\gamma'\mathsf{ad}_{\gamma_{kl}})-X_{\gamma'}\mathsf{ad}_{\gamma_{kl}}(X_\gamma\mathsf{ad}_{\gamma_{ij}})\\
        &=X_{\gamma}X_{\gamma'}\mathsf{ad}_{\gamma_{ij}}(\ad_{\gamma_{kl}})-X_{\gamma'}X_{\gamma}\mathsf{ad}_{\gamma_{kl}}(\mathsf{ad}_{\gamma_{ij}})\\
        &={\sigma(\gamma_{ij},\gamma_{kl})}X_{\gamma}X_{\gamma'}\ad_{\gamma_{ij}+\gamma_{kl}}-{\sigma(\gamma_{kl},\gamma_{ij})}X_{\gamma}X_{\gamma'}\ad_{\gamma_{kl}+\gamma_{ij}};
        \end{split}
    \end{equation*}
    \item[$(2)$] $[X_\gamma\mathfrak{d}_{\gamma_{ij}}, X_{\gamma'}\mathfrak{d}_{\gamma''}]\in\mathbf{L}_{\Gamma}$:indeed for every $X_a\in\C[\mathbb{G}]$
    \begin{equation*}
        \begin{split}
        [X_\gamma\mathfrak{d}_{\gamma_{ij}}, X_{\gamma'}\mathfrak{d}_{\gamma''}]X_a&=X_\gamma\mathfrak{d}_{\gamma_{ij}}\left(X_{\gamma'}\mathfrak{d}_{\gamma''}X_a\right)-X_{\gamma'}\mathfrak{d}_{\gamma''}\left(X_\gamma\mathfrak{d}_{\gamma_{ij}}X_a\right)\\
        &=X_{\gamma}\mathfrak{d}_{\gamma_{ij}}\left(X_{\gamma'}\omega(\gamma'',a)X_{\gamma''}X_{a}\right)-X_{\gamma'}\mathfrak{d}_{\gamma''}\left(X_{\gamma}X_{\gamma_{ij}}X_{a}-X_{\gamma}X_{a}X_{\gamma_{ij}}\right)\\
        &=\omega(\gamma'',a)X_{\gamma}\left(X_{\gamma_{ij}}X_{\gamma'}X_{\gamma''}X_{a}-X_{\gamma'}X_{\gamma''}X_{a}X_{\gamma_{ij}}\right)\\
        &-X_{\gamma'}\omega(\gamma'',\gamma+\gamma_{ij}+a)X_{\gamma''}X_{\gamma}X_{\gamma_{ij}}X_{a}+X_{\gamma'}\omega(\gamma'',\gamma+a+\gamma_{ij})X_{\gamma''}X_{\gamma}X_{a}X_{\gamma_{ij}}\\
        &=\left(\omega(\gamma'',a)-\omega(\gamma'',\gamma+\gamma_{ij})-\omega(\gamma'',a)\right)X_{\gamma}X_{\gamma'}X_{\gamma''}X_{\gamma_{ij}}X_{a}\\
        &-\left(\omega(\gamma'',a)-\omega(\gamma'',a)-\omega(\gamma'',\gamma+\gamma_{ij})\right)X_{\gamma}X_{\gamma'}X_{\gamma''}X_{a}X_{\gamma_{ij}}\\
        &=-\omega(\gamma'',\gamma+\gamma_{ij})X_{\gamma}X_{\gamma'}X_{\gamma''}\mathfrak{d}_{\gamma_{ij}}(X_a);
       \end{split}
    \end{equation*}
    \item[$(3)$] $[X_\gamma\mathfrak{d}_{\gamma'}, X_{\gamma''}\mathfrak{d}_{\gamma'''}]\in L$;indeed for every $X_a\in\C[\mathbb{G}]$

\begin{equation*}
\begin{split}
[X_\gamma\mathfrak{d}_{\gamma'}, X_{\gamma''}\mathfrak{d}_{\gamma'''}]X_a &= X_{\gamma} \mathfrak{d}_{\gamma'}\left(X_{\gamma''}\omega({\gamma'''} ,a) X_{{\gamma'''}}X_{a} \right)-X_{\gamma''}\mathfrak{d}_{\gamma'''}\left(\omega(\gamma',a)X_{\gamma'}X_{a}\right)\\ 
&=\omega(\gamma''',a)\omega(\gamma',\gamma'''+\gamma''+a)X_{\gamma}X_{\gamma'}X_{\gamma''}X_{\gamma'''}X_{a}+\\
&-\omega(\gamma',a)\omega(\gamma''',\gamma+\gamma'+a)X_{\gamma''}X_{\gamma'''}X_{\gamma}X_{\gamma'}X_{a}\\
&=\left(\omega(\gamma''' ,a)\left(\omega(\gamma' ,a)+\omega(\gamma',\gamma'''+\gamma'')\right)\right)X_{\gamma}X_{\gamma'}X_{\gamma''}X_{\gamma'''}X_{a}+\\
&-\left(\omega(\gamma' ,a)\left(\omega(\gamma''' ,a)+\omega(\gamma''',\gamma+\gamma')\right)\right)X_{\gamma}X_{\gamma'}X_{\gamma''}X_{\gamma'''}X_{a}\\
&=\omega(\gamma',\gamma'''+\gamma'')X_{\gamma'}X_{\gamma''}X_{\gamma}\mathfrak{d}_{\gamma'''}(X_a)-\omega(\gamma''',\gamma+\gamma')X_{\gamma'''}X_{\gamma''}X_{\gamma}\mathfrak{d}_{\gamma'}(X_a).
\end{split}
\end{equation*}
\end{itemize}

\end{proof}

\begin{lemma}[Lemma \ref{lem:CLambdamodule}]\label{proof:CLambdamodule}
The $\C[\Gamma]-$module $\tilde{\mathbf{L}}$ is a Lie ring with respect to the Lie bracket induced by $\tilde{\mathfrak{h}}$.
\end{lemma}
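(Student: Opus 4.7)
The plan is to exploit that $\tilde{\mathbf{L}}$ is, by definition, a subset of the Lie algebra $\tilde{\mathfrak{h}}$ (Lemma \ref{lem:dgLa}/\ref{proof:dgLa}). Hence antisymmetry and the Jacobi identity for $[\cdot,\cdot]_\sim$ are inherited for free, and the only content of the lemma is closure under the bracket. Moreover, since we only need a Lie ring structure (not $\C[\Lambda]$-bilinearity), we may work additively: once we show that brackets of generators with arbitrary $\C[\Lambda]$-coefficients lie in $\tilde{\mathbf{L}}$, we are done. Split the verification according to the two types of generators $\tilde{\mathfrak{l}}_{\gamma_{ij}}$ (``$S$-type'') and $\tilde{\mathfrak{l}}_\gamma$ (``$K$-type''), so that three cases arise.

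For the mixed bracket $[\mathfrak{w}^{\gamma_1}\tilde{\mathfrak{l}}_{\gamma_{ij}},\mathfrak{w}^{\gamma_2}\tilde{\mathfrak{l}}_\gamma]_\sim$, the formula of Lemma \ref{lem:dgLa} collapses (one of the two $\partial_n$ summands and one of the two matrix summands are zero) to $\bigl(-\Omega(\gamma)\langle m_1,n_\gamma\rangle\,E_{ij}\mathfrak{w}^{m_1+m_2},0\bigr)$, with $m_1=\gamma_1+m(\gamma_{ij})$, $m_2=\gamma_2+\gamma$. This is $-\Omega(\gamma)\langle m_1,n_\gamma\rangle\,\mathfrak{w}^{\gamma+\gamma_1+\gamma_2}\tilde{\mathfrak{l}}_{\gamma_{ij}}\in\tilde{\mathbf{L}}$. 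For the $K$--$K$ bracket $[\mathfrak{w}^{\gamma_1}\tilde{\mathfrak{l}}_\gamma,\mathfrak{w}^{\gamma_2}\tilde{\mathfrak{l}}_{\gamma'}]_\sim$, both matrix parts vanish and the formula reduces to the bracket on the tropical vertex Lie algebra $\mathfrak{h}$: the resulting derivation $\partial_{\langle\gamma',n_\gamma\rangle n_{\gamma'}-\langle\gamma,n_{\gamma'}\rangle n_\gamma}$ expands by $\C$-linearity in the subscript into $\langle\gamma',n_\gamma\rangle\partial_{n_{\gamma'}}-\langle\gamma,n_{\gamma'}\rangle\partial_{n_\gamma}$, giving a $\C[\Lambda]$-combination of $\mathfrak{w}^\gamma\tilde{\mathfrak{l}}_{\gamma'}$ and $\mathfrak{w}^{\gamma'}\tilde{\mathfrak{l}}_\gamma$, hence an element of $\tilde{\mathbf{L}}$.

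The $S$--$S$ bracket is where the only substantive work lies. By the bracket formula, $[\mathfrak{w}^{\gamma_1}\tilde{\mathfrak{l}}_{\gamma_{ij}},\mathfrak{w}^{\gamma_2}\tilde{\mathfrak{l}}_{\gamma_{kl}}]_\sim=([E_{ij},E_{kl}]\mathfrak{w}^{m_1+m_2},0)$, and $[E_{ij},E_{kl}]=\delta_{jk}E_{il}-\delta_{il}E_{kj}$. When $j=k$ but $l\neq i$ the output is $E_{il}\mathfrak{w}^{m(\gamma_{ij})+m(\gamma_{jl})+\gamma_1+\gamma_2}$; using the additivity identity $m(\gamma_{ij})+m(\gamma_{jl})=m(\gamma_{ij}+\gamma_{jl})+(e_{ij}+e_{jl}-e_{il})$ (with $\gamma_{ij}+\gamma_{jl}\in\Gamma_{il}$) this is $\mathfrak{w}^{\gamma'}\,\tilde{\mathfrak{l}}_{\gamma_{ij}+\gamma_{jl}}$ for a suitable $\gamma'\in\Gamma$; the symmetric case $l=i$, $k\neq j$ is analogous.

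The main obstacle — and the one I would check most carefully — is the special subcase $j=k$ \emph{and} $l=i$, where $[E_{ij},E_{ji}]=E_{ii}-E_{jj}$ produces a diagonal matrix that is not of the form $E_{ab}$ with $a\neq b$. The strategy here would be to exploit the $2d$-$4d$ algebraic structure: on the $\mathbf{L}_\Gamma$ side, the analogous bracket $[\mathfrak{d}_{\gamma_{ij}},\mathfrak{d}_{\gamma_{ji}}]$ vanishes, because $\gamma_{ij}+\gamma_{ji}\in\Gamma$ and property (ii) of the twisting function $\sigma$ forces $\mathrm{ad}_{X_{\gamma_0}}=0$ for $\gamma_0\in\Gamma$. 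Consistency of the intended homomorphism $\Upsilon$ of Theorem \ref{thm:homomLiering} then imposes that this diagonal element be interpreted as zero (or, equivalently, absorbed into the generators $\tilde{\mathfrak{l}}_\gamma$ via the relation expressing the Cartan part $E_{ii}-E_{jj}$ through the derivations of the trivialised line bundle summands of $E$). I would therefore either enlarge the generating set by a Cartan-type family $(E_{ii}-E_{jj})\mathfrak{w}^\gamma$ and check closure remains under bracketing with $S$- and $K$-generators, or quotient out these elements so that $\tilde{\mathbf{L}}$ is defined up to this identification; either convention gives a Lie ring and is compatible with the bracket formulas computed in the remaining cases.
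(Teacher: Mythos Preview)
Your strategy coincides with the paper's: both observe that antisymmetry and the Jacobi identity are inherited from $\tilde{\mathfrak{h}}$, so the only content is closure under the bracket, and both reduce by $\C$-linearity to the three bracket types between $S$-type and $K$-type generators. Your computations in the $S$--$K$ and $K$--$K$ cases match the paper's line for line.

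In the $S$--$S$ case you go further than the paper. The paper simply records
\[
[\mathfrak{w}^{\gamma}(E_{ij}\mathfrak{w}^{m(\gamma_{ij})},0),\mathfrak{w}^{\gamma'}(E_{kl}\mathfrak{w}^{m(\gamma_{kl})},0)]_{\tilde{\mathfrak{h}}}=(\mathfrak{w}^{\gamma+\gamma'}[E_{ij},E_{kl}]_{\mathfrak{gl}}\mathfrak{w}^{m(\gamma_{ij})+m(\gamma_{kl})},0)
\]
and declares the right-hand side to lie in $\tilde{\mathbf{L}}$, without isolating the subcase $j=k$, $l=i$. You correctly note that there $[E_{ij},E_{ji}]=E_{ii}-E_{jj}$ is diagonal, and with $\tilde{\mathbf{L}}$ defined as the $\C[\Lambda]$-span of the off-diagonal generators $(E_{ij}\mathfrak{w}^{m(\gamma_{ij})},0)$ (for $i\neq j$) together with the $K$-type generators $(0,\Omega(\gamma)\mathfrak{w}^\gamma\partial_{n_\gamma})$, such an element is not visibly a member. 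This is a genuine point the paper's proof glosses over. Your appeal to ``consistency with $\Upsilon$'' is not itself a proof of closure --- it only shows that the problematic element must lie in $\ker\Upsilon$ if the theorem is to hold --- but your two proposed repairs (adjoining the Cartan-type generators $(E_{ii}-E_{jj})\mathfrak{w}^\gamma$, or passing to the quotient by them) are both sound and compatible with the rest of the argument. The paper does not explicitly adopt either convention.
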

\begin{proof}
As we have already comment in the proof of Lemma \ref{lem:CGamma_module}, since the bracket is induced by the Lie bracket $[\cdot,\cdot]_{{\tilde{\mathfrak{h}}}}$, we are left to prove that $\tilde{\mathbf{L}}$ is closed under $[\cdot,\cdot]_{\tilde{\mathfrak{h}}}$. In particular by $\C$-linearity it is enough to show the following: 
\begin{align*}
(1)&\, [\mathfrak{w}^{\gamma}\left(E_{ij}\mathfrak{w}^{m(\gamma_{ij})},0\right), \mathfrak{w}^{\gamma'}\left(E_{kl}\mathfrak{w}^{m(\gamma_{kl})},0\right)]\in\tilde{\mathbf{L}}\\
(2)&\, [\mathfrak{w}^{\gamma}\left(E_{ij}\mathfrak{w}^{m(\gamma_{ij})},0\right), \mathfrak{w}^{\gamma'}\left(0, \Omega(\gamma)\mathfrak{w}^{\gamma'}\partial_{n_{\gamma'}}\right)]\in \tilde{\mathbf{L}}\\
(3)&\, [\mathfrak{w}^{\gamma}\left(0, \Omega(\gamma)\mathfrak{w}^{\gamma'}t\partial_{n_{\gamma'}}\right), \mathfrak{w}^{\gamma''}\left(0, \Omega(\gamma''')\mathfrak{w}^{\gamma'''}\partial_{n_{\gamma'''}}\right)]\in \tilde{\mathbf{L}}
\end{align*}
and they are explicitly computed below:
\begin{align*}
(1)&\,[\mathfrak{w}^{\gamma}\left(E_{ij}t\mathfrak{w}^{m(\gamma_{ij})},0\right), \mathfrak{w}^{\gamma'}\left(E_{kl}t\mathfrak{w}^{m(\gamma_{kl})},0\right)]=\left(\mathfrak{w}^{\gamma}\mathfrak{w}^{\gamma'}[E_{ij},E_{kl}]_{\mathfrak{gl}(n)}\mathfrak{w}^{m(\gamma_{ij})}\mathfrak{w}^{m(\gamma_{kl})},0\right) \\
(2)&\,[\mathfrak{w}^{\gamma}\left(E_{ij}\mathfrak{w}^{m(\gamma_{ij})},0\right), \mathfrak{w}^{\gamma''}\left(0, \Omega(\gamma)\mathfrak{w}^{\gamma'}\partial_{n_{\gamma'}}\right)]=\left(-E_{ij}\Omega(\gamma')\langle\gamma+m(\gamma_{ij}),n_{\gamma'}\rangle \mathfrak{w}^{m(\gamma_{ij})}\mathfrak{w}^{\gamma}\mathfrak{w}^{\gamma''}\mathfrak{w}^{\gamma'},0\right)\\
(3)&\,[\mathfrak{w}^{\gamma}\left(0, \Omega(\gamma)\mathfrak{w}^{\gamma'}\partial_{n_{\gamma'}}\right), \mathfrak{w}^{\gamma''}\left(0, \Omega(\gamma''')\mathfrak{w}^{\gamma'''}\partial_{n_{\gamma'''}}\right)]=\Big(0,\Omega(\gamma)\Omega(\gamma''')\mathfrak{w}^{\gamma}\mathfrak{w}^{\gamma'}\mathfrak{w}^{\gamma''}\mathfrak{w}^{\gamma'''}\cdot\\
&\qquad\qquad\cdot\left(\langle\gamma''+\gamma''',n_{\gamma'}\rangle\partial_{n_{\gamma'''}}-\langle\gamma+\gamma',n_{\gamma'''}\rangle\partial_{n_{\gamma'}}\right)\Big).
\end{align*}

\end{proof}

\begin{theorem}[Theorem \ref{thm:homomLiering}]\label{proof:homoLiering}
Let $\left(\mathbf{L}_\Gamma,[\cdot,\cdot]_{\Der(\C[\mathbb{G}])}\right)$ and $\left(\tilde{\mathbf{L}},[\cdot,\cdot]_{\tilde{\mathfrak{h}}}\right)$ be the $\C[\Gamma]$-modules defined before. 
Assume $\omega(\gamma,a)=\Omega(\gamma)\langle{ a},n_{\gamma}\rangle$, then there exists a homomorphism of $\C[\Gamma]$-modules and of Lie rings $\Upsilon\colon \mathbf{L}_\Gamma\to\tilde{\mathbf{L}}$, which is defined as follows:
\begin{equation}
\begin{split}
&\Upsilon(X_{\gamma}\mathfrak{d}_{\gamma_{ij}})\defeq\mathfrak{w}^{\gamma}\left(E_{ij}\mathfrak{w}^{m(\gamma_{ij})},0\right), \forall i\neq j\in\mathcal{V}, \forall\gamma\in\Gamma;\\
&\Upsilon(X_{\gamma'}\mathfrak{d}_\gamma)\defeq \mathfrak{w}^{\gamma'}\left(0, \Omega(\gamma)\mathfrak{w}^{\gamma}\partial_{n_\gamma}\right), \forall \gamma',\gamma\in\Gamma.
\end{split} 
\end{equation}  
\end{theorem}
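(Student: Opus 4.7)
My plan is to verify the theorem by (i) showing $\Upsilon$ is well-defined as a $\mathbb{C}[\Gamma]$-module homomorphism, and (ii) checking Lie bracket compatibility on a set of generators, using the explicit formulas already computed in Lemmas \ref{lem:CGamma_module} and \ref{lem:CLambdamodule}. The assumption $\omega(\gamma,a)=\Omega(\gamma)\langle a,n_\gamma\rangle$ will enter at one precise point, in the K-K case; everything else is essentially the statement that $\mathfrak{gl}(r,\mathbb{C})$-matrix commutators reproduce the groupoid-algebra commutators of the $X_{\gamma_{ij}}$.

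First, under the hypothesis on $\omega$ (as observed in the remark following the theorem), $\mathbf{L}_\Gamma$ is generated as a $\mathbb{C}[\Gamma]$-module by $\{\mathfrak{d}_{e_{ij}}\}_{i\neq j}$ together with $\{\mathfrak{d}_\gamma\}_{\gamma\in\Gamma_{\mathrm{prim}}}$, and $\tilde{\mathbf{L}}$ has the analogous generators $(E_{ij}\mathfrak{w}^{m(e_{ij})},0)$ and $(0,\Omega(\gamma)\mathfrak{w}^{\gamma}\partial_{n_\gamma})$. Defining $\Upsilon$ on these generators and extending $\mathbb{C}[\Gamma]$-linearly via the identification $\mathbb{C}[\Gamma]\cong\mathbb{C}[\Lambda]$, $X_\gamma\mapsto\mathfrak{w}^\gamma$ (which is a ring isomorphism up to the twist by $\sigma$), yields a well-defined $\mathbb{C}[\Gamma]$-module map; this step is essentially tautological.

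Next I would verify Lie bracket compatibility by checking the three cases S-S, S-K, K-K on generators. For S-S, comparing case (1) of Lemma \ref{lem:CGamma_module} with case (1) of Lemma \ref{lem:CLambdamodule} reduces the claim to the identity $[E_{ij},E_{kl}]=\delta_{jk}E_{il}-\delta_{il}E_{kj}$ paired with $[X_{\gamma_{ij}},X_{\gamma_{kl}}]=\sigma(\gamma_{ij},\gamma_{kl})X_{\gamma_{il}}\delta_{jk}-\sigma(\gamma_{kl},\gamma_{ij})X_{\gamma_{kj}}\delta_{li}$; the weights on both sides agree because the chosen section $\{e_{ij}\}$ is composed additively, so $m(\gamma_{ij})+m(\gamma_{kl})=m(\gamma_{il})$ when $j=k$. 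For S-K, Lemma \ref{lem:CGamma_module}(2) gives $[X_\gamma\mathfrak{d}_{\gamma_{ij}},X_{\gamma'}\mathfrak{d}_{\gamma''}]=-\omega(\gamma'',\gamma+\gamma_{ij})X_\gamma X_{\gamma'}X_{\gamma''}\mathfrak{d}_{\gamma_{ij}}$, while Lemma \ref{lem:CLambdamodule}(2) yields the bracket in $\tilde{\mathbf{L}}$ proportional to $\Omega(\gamma'')\langle\gamma+m(\gamma_{ij}),n_{\gamma''}\rangle$; applying $\Upsilon$ to the first and invoking $\omega(\gamma'',\cdot)=\Omega(\gamma'')\langle\cdot,n_{\gamma''}\rangle$ matches them.

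For K-K, from case (3) of Lemma \ref{lem:CGamma_module},
\[
[X_\gamma\mathfrak{d}_{\gamma'},X_{\gamma''}\mathfrak{d}_{\gamma'''}]=\omega(\gamma',\gamma''+\gamma''')\,X_{\gamma+\gamma'+\gamma''}\mathfrak{d}_{\gamma'''}-\omega(\gamma''',\gamma+\gamma')\,X_{\gamma+\gamma''+\gamma'''}\mathfrak{d}_{\gamma'},
\]
and applying $\Upsilon$ gives
\[
\Bigl(0,\,\Omega(\gamma')\Omega(\gamma''')\mathfrak{w}^{\gamma+\gamma'+\gamma''+\gamma'''}\bigl(\langle\gamma''+\gamma''',n_{\gamma'}\rangle\partial_{n_{\gamma'''}}-\langle\gamma+\gamma',n_{\gamma'''}\rangle\partial_{n_{\gamma'}}\bigr)\Bigr),
\]
which is precisely the bracket in $\tilde{\mathbf{L}}$ from Lemma \ref{lem:CLambdamodule}(3). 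The main obstacle is the careful bookkeeping of the cocycle factors $\sigma(\cdot,\cdot)$ appearing in the $\mathbb{C}[\Gamma]$-products versus the untwisted $\mathbb{C}[\Lambda]$-products on the $\tilde{\mathbf{L}}$ side; one checks using the cocycle property (i) and the symmetry (ii) that $\sigma$ factors cancel in pairs in each of the three bracket cases, so that $\Upsilon$ descends consistently across the identification.
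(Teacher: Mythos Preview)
Your proposal is correct and follows essentially the same approach as the paper: a case-by-case verification of bracket compatibility (S--S, S--K, K--K) using the explicit formulas already recorded in Lemmas~\ref{lem:CGamma_module} and~\ref{lem:CLambdamodule}. You are somewhat more explicit than the paper about the twist $\sigma$ and the additivity of the section $\{e_{ij}\}$, but the underlying computation is the same.
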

\begin{proof}

We have to prove that $\Upsilon$ preserves the Lie-bracket, i.e. that for every $l_1,l_2\in L$, then $\Upsilon\left(\left[l_1,l_2\right]_{\mathbf{L}_\Gamma}\right)=\left[\Upsilon(l_1),\Upsilon(l_2)\right]_{\tilde{\mathbf{L}}}$. In particular, by $\C$-linearity it is enough to prove the following identities:
\begin{equation*}
\begin{split}
(1)& \Upsilon\left(\left[X_\gamma\mathfrak{d}_{\gamma_{ij}},X_{\gamma'}\mathfrak{d}_{\gamma_{kl}}\right]_{\mathbf{L}_\Gamma}\right)=\left[\Upsilon(X_\gamma\mathfrak{d}_{\gamma_{ij}}),\Upsilon(X_{\gamma'}\mathfrak{d}_{\gamma_{kl}})\right]_{\tilde{\mathbf{L}}}\\
(2)& \Upsilon\left(\left[X_\gamma\mathfrak{d}_{\gamma_{ij}},X_{\gamma'}\mathfrak{d}_{\gamma''}\right]_{\mathbf{L}_\Gamma}\right)=\left[\Upsilon(X_{\gamma}\mathfrak{d}_{\gamma'}),\Upsilon(X_{\gamma''}\mathfrak{d}_{\gamma_{kl}})\right]_{\tilde{\mathbf{L}}}\\
(3)& \Upsilon\left(\left[X_\gamma\mathfrak{d}_{\gamma'},X_{\gamma''}\mathfrak{d}_{\gamma'''}\right]_{\mathbf{L}_\Gamma}\right)=\left[\Upsilon(X_{\gamma}\mathfrak{d}_{\gamma'}),\Upsilon(X_{\gamma''}\mathfrak{d}_{\gamma'''})\right]_{\tilde{\mathbf{L}}}.
\end{split}
\end{equation*}
The identity $(1)$ is proved below:
\begin{equation*}
\begin{split}
   \text{LHS}&=\Upsilon\left(X_{\gamma}X_{\gamma'}{\sigma(\gamma_{ij},\gamma_{kl})}\mathsf{ad}_{\gamma_{ij}+\gamma_{kl}}-X_{\gamma}X_{\gamma'}{\sigma(\gamma_{kl},\gamma_{ij})}\mathsf{ad}_{\gamma_{kl}+\gamma_{ij}}\right)\\
            &=\mathfrak{w}^{\gamma}\mathfrak{w}^{\gamma'}\left(E_{ij}E_{kl}\mathfrak{w}^{m(\gamma_{ij})}\mathfrak{w}^{m(\gamma_{kl})}-E_{kl}E_{ij}\mathfrak{w}^{m(\gamma_{kl})}\mathfrak{w}^{m(\gamma_{ij})},0\right)\\
            &=\left(\mathfrak{w}^{\gamma}\mathfrak{w}^{\gamma'}[E_{ij}\mathfrak{w}^{m(\gamma_{ij})},E_{kl}\mathfrak{w}^{m(\gamma_{kl})}],0\right)\\
            \text{RHS}&=\left[\left(\mathfrak{w}^{\gamma} E_{ij}\mathfrak{w}^{m(\gamma_{ij})},0\right),\left(\mathfrak{w}^{\gamma'}E_{kl}\mathfrak{w}^{m(\gamma_{kl})},0\right)\right]_{\tilde{\mathfrak{h}}}\\
            &=\left(\mathfrak{w}^{\gamma}\mathfrak{w}^{\gamma'}[E_{ij}\mathfrak{w}^{m(\gamma_{ij})},E_{kl}\mathfrak{w}^{m(\gamma_{kl})}]_{\mathfrak{gl}(n)},0\right)\\
            &=\left(\mathfrak{w}^{\gamma}\mathfrak{w}^{\gamma'}[E_{ij}\mathfrak{w}^{m(\gamma_{ij})},E_{kl}\mathfrak{w}^{m(\gamma_{kl})}],0\right).
        \end{split}
    \end{equation*}
Then the second identity can be proved as follows:    
\begin{equation*}
    \begin{split}
            \text{LHS}&=\Upsilon\left(\omega(\gamma'',\gamma+\gamma_{ij})X_{\gamma+\gamma'+\gamma''}\mathfrak{d}_{\gamma_{ij}}\right)\\
            &=-\omega(\gamma'',\gamma+\gamma_{ij})\mathfrak{w}^{\gamma}\mathfrak{w}^{\gamma'}\mathfrak{w}^{\gamma''}\left(E_{ij}\mathfrak{w}^{m(\gamma_{ij})},0\right)\\
            \text{RHS}&=\left[\left(\mathfrak{w}^{\gamma} E_{ij}\mathfrak{w}^{m(\gamma_{ij})},0\right),\left(0,\mathfrak{w}^{\gamma'}\Omega(\gamma'')\mathfrak{w}^{\gamma''}\partial_{n_{\gamma''}}\right)\right]_{\tilde{\mathfrak{h}}}\\
            &=-\left(\mathfrak{w}^{\gamma}\mathfrak{w}^{\gamma'}E_{ij}\Omega(\gamma'')\langle m(\gamma_{ij})+\gamma, n_{\gamma''} \rangle \mathfrak{w}^{m(\gamma_{ij})}\mathfrak{w}^{\gamma''},0\right).
        \end{split}
    \end{equation*}
Finally the third identity is proved below:    
\begin{equation*}
            \begin{split}
            \text{LHS}&=\Upsilon\left(\omega(\gamma',\gamma''+\gamma''')X_{\gamma}X_{\gamma''}X_{\gamma'''}\mathfrak{d}_{\gamma'''}-\omega(\gamma''',\gamma+\gamma')X_{\gamma'''}X_{\gamma''}X_{\gamma}\mathfrak{d}_{\gamma'}\right)\\
            &=\left(0,\omega(\gamma',\gamma''+\gamma''')\mathfrak{w}^{\gamma}\mathfrak{w}^{\gamma'}\mathfrak{w}^{\gamma''}\Omega(\gamma''')\mathfrak{w}^{\gamma'''}\partial_{n_{\gamma'''}}-\omega(\gamma''',\gamma+\gamma')\mathfrak{w}^{\gamma'''}\mathfrak{w}^{\gamma''}\mathfrak{w}^{\gamma}\Omega(\gamma')\mathfrak{w}^{\gamma'}\partial_{n_{\gamma'}}\right)\\
            \text{RHS}&=\left[\left(0,\mathfrak{w}^{\gamma}\Omega(\gamma')\mathfrak{w}^{\gamma'}\partial_{n_{\gamma'}} \right),\left(0,\mathfrak{w}^{\gamma''}\Omega(\gamma''')\mathfrak{w}^{\gamma'''}\partial_{n_{\gamma'''}}\right)\right]_{\tilde{\mathfrak{h}}}\\
            &=\left(0, \Omega(\gamma')\Omega(\gamma''')\left[\mathfrak{w}^{\gamma}\mathfrak{w}^{\gamma'}\partial_{n_{\gamma'}},\mathfrak{w}^{\gamma''}\mathfrak{w}^{\gamma'''}\partial_{n_{\gamma'''}}\right]_{\tilde{\mathfrak{h}}}\right)\\
            &=\left(0, \Omega(\gamma')\Omega(\gamma''')\mathfrak{w}^{\gamma}\mathfrak{w}^{\gamma'}\mathfrak{w}^{\gamma''}\mathfrak{w}^{\gamma'''}\left(\langle\gamma''+\gamma''', n_{\gamma'}\rangle\partial_{n_{\gamma'''}}-\langle\gamma+\gamma', n_{\gamma'''}\rangle\partial_{n_{\gamma'}}\right)\right).
        \end{split}
    \end{equation*}    
\end{proof}

\bibliographystyle{amsplain}
\bibliography{biblioMC}

\providecommand{\bysame}{\leavevmode\hbox to3em{\hrulefill}\thinspace}
\providecommand{\MR}{\relax\ifhmode\unskip\space\fi MR }
\providecommand{\MRhref}[2]{%
  \href{http://www.ams.org/mathscinet-getitem?mr=#1}{#2}
}
\providecommand{\href}[2]{#2}
\begin{thebibliography}{10}

\bibitem{CV}
Sergio Cecotti and Cumrun Vafa, \emph{On classification of {$N=2$}
  supersymmetric theories}, Comm. Math. Phys. \textbf{158} (1993), no.~3,
  569--644. \MR{1255428}

\bibitem{MCscattering}
Kwokwai Chan, Naichung Conan~Leung, and Ziming~Nikolas Ma, \emph{{Scattering
  diagrams from asymptotic analysis on Maurer-Cartan equations}},
  \href{https://arxiv.org/abs/1807.08145v2}{arXiv:1807.08145}, 2017.

\bibitem{MCv1}
\bysame, \emph{{Scattering diagrams from asymptotic analysis on Maurer-Cartan
  equations}}, \href{https://arxiv.org/abs/1807.08145v1}{arXiv:1807.08145v1},
  2017.

\bibitem{defpair}
Kwokwai Chan and Yat-Hin Suen, \emph{A differential-geometric approach to
  deformations of pairs {$(X,E)$}}, Complex Manifolds \textbf{3} (2016), no.~1,
  16--40.

\bibitem{CLMaY19}
Naichung Conan~Leung, Ziming~Nikolas Ma, and Matthew~B. Young, \emph{{Refined
  {S}cattering diagrams and theta functioms from asymptotic analysis of
  {M}aurer-{C}artan equations}},
  \href{https://arxiv.org/abs/1902.05765}{arXiv:1807.08145}, 2019.

\bibitem{dubrovin}
Boris Dubrovin, \emph{{G}eometry of $2d$ topological field theories},
  Integrable Systems and Quantum Groups (Montecatini Terme) (M.~Francaviglia
  and S.Greco, eds.), {L}ecture {N}otes in {M}athematics, vol. 1620, Springer,
  Berlin, 1993, pp.~128--348.

\bibitem{WCF4d}
Davide Gaiotto, Gregory~W. Moore, and Andrew Neitzke, \emph{{F}our dimensional
  {W}all-{C}rossing via three dimensional field theory}, Commun. Math. Phys.
  (2010), no.~299, 163--224.

\bibitem{WCF2d-4d}
\bysame, \emph{{W}all-{C}rossing in coupled 2d-4d systems}, J. High Energy
  Phys. (2012), no.~12, 082, front matter + 166. \MR{3045271}

\bibitem{GPS}
Mark Gross, Rahul Pandharipande, and Bernd Siebert, \emph{The tropical vertex},
  Duke Math. J. \textbf{153} (2010), no.~2, 297--362.

\bibitem{2d-4dSK}
Gabriel Kerr and Yan Soibelman, \emph{{On $2d$-$4d$ motivic {W}all-{C}rossing
  {F}ormulas}}, \href{https://arxiv.org/pdf/1711.03695}{arXiv:1807.08145},
  2017.

\bibitem{KS}
Maxim Kontsevich and Yan Soibelman, \emph{Affine structures and
  non-{A}rchimedean analytic spaces}, The unity of mathematics, Progr. Math.,
  vol. 244, Birkh\"{a}user Boston, Boston, MA, 2006, pp.~321--385.

\bibitem{WCFKS}
\bysame, \emph{Stability structures, motivic {D}onaldson-{T}homas invariants
  and cluster transformations},
  \href{https://arxiv.org/abs/0811.2435}{arXiv:0811.2435}, 2008.

\bibitem{Manetti}
Marco Manetti, \emph{Differential graded {L}ie algebras and formal deformation
  theory}, Algebraic geometry-{S}eattle 2005. {P}art 2, Proc. Sympos. Pure
  Math., vol.~80, Amer. Math. Soc., Providence, RI, 2009, pp.~785--810.
  \MR{2483955}

\end{thebibliography}

\end{document}